\newlist{prooflist}{description}{1}
\setlist[prooflist]{font=\normalfont \itshape, labelindent = \parindent, leftmargin = 0pt}
\title{I-surfaces from surfaces with one exceptional unimodal point}
    \tikzset{
curve/.style = { every node/.style = { font = \scriptsize}, thick},
E1/.style = { MidnightBlue, curve},
Ei/.style = { NavyBlue, curve},
branch/.style = { red, curve,every node/.style = { font = \scriptsize}},
fibre/.style = {Gray,curve},
F/.style = {Orange, curve , thin},
2F/.style = {White!70!Orange,   curve, very thick},
GG/.style = {PineGreen,curve},
G/.style = {PineGreen,curve,  thin},
2G/.style = {White!70!PineGreen,  curve, very thick},
pfeil/.style = {->, every node/.style = { font = \scriptsize}},
sing/.style = {->, every node/.style = { font = \scriptsize}},
}
\author{S\"onke Rollenske}
\address{S\"onke Rollenske\\FB 12/Mathematik und Informatik\\
Philipps-Universit\"at Marburg\\
Hans-Meerwein-Str. 6\\
35032 Marburg\\
Germany}
\email{rollenske@mathematik.uni-marburg.de}
\author{Diana Torres}
\address{Diana Torres\\FB 12/Mathematik und Informatik\\
Philipps-Universit\"at Marburg\\
Hans-Meer\-wein-Str. 6\\
35032 Marburg\\
Germany}
\email{dicato98@gmail.com}
\newlength{\Breite}
\begin{document}
\begin{abstract}
We complement recent work of Gallardo, Pearlstein, Schaffler, and Zhang, showing that
the stable surfaces with $K_X^2 =1$ and $\chi(\ko_X) = 3$ they construct are indeed the only ones arising from imposing an exceptional unimodal double point.

In addition, we explicitly describe the birational type of the surfaces constructed from singularities of type $E_{12}$, $E_{13}$, $E_{14}$.
\end{abstract}
\subjclass[2010]{14J10; 14J29, 14E05}

\maketitle
\setcounter{tocdepth}{1}
\tableofcontents

\section{Introduction}

This paper is inspired by a recent work of  Gallardo, Pearlstein,  Schaffler, and  Zhang \cite{GPSZ22}, so let us briefly set out the context. Classically, the coarse moduli space $\gothM_{1,3}$ of canonical models of surfaces of general type with $K_X^2=1$ and $\chi(\ko_X) = \chi(X) = 3$ is an irreducible and unirational variety of dimension $28$. Sometimes these run under the name of special Horikawa surfaces, sometimes they are called classical I-surfaces for brevity and they can all be described as hypersurfaces of degree $10$ in the weighted projective space $\IP(1,1,2,5)$. 

Nowadays, the Gieseker moduli  space $\gothM_{1,3}$ is known to admit a modular compactification $\overline{\gothM}_{1,3}$, the moduli space of stable surfaces, sometimes called KSBA-moduli space after Koll\'ar, Shepherd-Barron, and Alexeev (compare \cite{KollarModuli}). Stable I-surfaces have been used as a testing ground for various approaches to understand the compactification and the surfaces parametrised by it. The approach of the first named author and his collaborators \cite{FPR17, FPRR22, CFPRR} involved trying to slowly increase the generality of which singularities we allow, partly inspired by Hodge theoretic aspects \cite{GGR21, CFPR}.

Gallardo,Pearlstein,  Schaffler, and  Zhang used a different approach in \cite{GPSZ22}: instead of working inside the realm of stable surfaces they consider degenerations to non-log-canonical surfaces and consider the stable replacement of the limit surface, whose existence is guaranteed by the properness of the stable compactification. 

More precisely, they consider  hypersurfaces in $\IP(1,1,2,5)$ with a unique singular point which is one of the exceptional unimodal double points from Arnold's classification \cite{Arnold76}, given by an explicit type of equation. From these, they describe the stable replacement via an explicit weighted blow up. Counting parameters they find eight new divisors in the closure of the classical component. 
These divisors parametrise reducible surfaces, where one component is  a so-called K3-tail, the two-dimensional equivalent of an elliptic tail on a stable curve. 
Because their approach starts with explicit equations, they cannot exclude that other degenerations with a unique singular point of the given type exist \cite[Rem. 4.5]{GPSZ22}.
In addition, they did not provide an explicit geometric description of the second component in case of the singularities $E_{12}$, $E_{13}$, $E_{14}$.

We use a different approach to the same surfaces and are able to clarify both points.
 \begin{custom}[Theorem]
 Let $W$ be a Gorenstein surface with $K_W^2 = 1$, $\chi(W) = 3$ and ample canonical bundle. 
 \begin{enumerate}
  \item If $W$ has a unique singular point $w$ which is an exceptional unimodal double point, then $W$ is in the closure of one of the families constructed in \cite{GPSZ22}.
  \item If in addition the point $w$ is of type $E_{12}$, $E_{13}$, or $E_{14}$, then the minimal resolution of $W$ is a minimal elliptic surface with a bisection as explicitly described in Section \ref{sect: En}.
 \end{enumerate}
\end{custom}
The paper is organised as follows: after some general remarks in Section \ref{sect: prelims}, we treat the $E_n$ singularities in Section \ref{sect: En} and the other cases in Section \ref{sect: Z and W}. In both cases there will be a last step where we have to analyse every individual case. 

For the convenience of the reader, we add a final Section \ref{sect: application to moduli} where we spell out, what our results mean for the study of the compactification of the moduli space $\overline\gothM_{1,3}$.

\subsection*{Acknowledgements}
We would like to thank Luca Schaffler for several discussions on the topics of this article. S.R.\ is grateful for support by the DFG. D.T. is supported by a bridging position of Marburg University Research Academy (MARA). 

We thank the two anonymous referees for their many thoughtful comments that hopefully lead to an improvement of the presentation.

\section{Gorenstein surfaces with one  elliptic Gorenstein singularity}
\label{sect: prelims}

Recall that a normal Gorenstein surface singularity $(W,w)$ is called minimally elliptic  by Laufer \cite{Laufer77}, or just elliptic by Reid \cite[4.12]{Reid-chapters} if for some resolution $\varphi\colon X\to W$ we have $R^1\varphi_*\ko_X\isom \ko_w= \IC_w$. From the point of view of singularity theory, this class is quite well behaved but includes many important examples.
\begin{exam} 
Let $(W,w)$ be a Gorenstein  log canonical surface singularity, the classification of which can for example be found in \cite[Sect. 3.3]{KollarSMMP}.

If $(W,w)$ is canonical, it is one of the classical ADE singularities, thus rational and not elliptic. Of these we will only use the $A_n$ singularities, which are locally isomorphic to $\left(\{x^2+y^2 + z^{n+1}=0\}, 0\right)$. The exceptional divisor in the minimal resolution is a string of $(-2)$-curves.

If $(W,w)$ is strictly log-canonical, then it is  a simple elliptic singularity (contraction of a smooth elliptic curve) or a cusp singularity (contraction of a cycle of rational curves). Both are elliptic in the above sense.
\end{exam}
\begin{exam}\label{ex: unimodal}
The most important measures of the complexity of an isolated hypersurface singularity $Z$ are its Milnor number $\mu(W, w)$  and its modality $m(W, w)$.
The latter can be characterised as the dimension of the $\mu$-constant stratum in a versal deformation minus $1$ \cite{MR0440066}. 

The singularities of modality zero are exactly the ADE singularities. Arnold classified unimodal $(m(W, w)=1)$ and bimodal $(m(W, w) = 2)$ hypersurface singularities in \cite{Arnold76} and it was subsequently realised that these are elliptic \cite{Kulikov75, Karras80}. 
 Some information about the types we use are given in Table \ref{tab: info En} and Table \ref{tab: info Z and W}. 
 
 These are in fact special cases of elliptic Kodaira singularities \cite{Ebeling-Wall}, which are constructed as follows: start with a singular fibre $\bar E$ of a relatively minimal elliptic fibration as classified by Kodaira \cite{Kodaira63}. Then blow up $\bar E$ repeatedly in smooth points (on reduced components) and contract the strict transform of $\bar E$. 

 Conversely, the contraction of a configuration of curves as given in the tables will result in a singularity of the given type, see \cite[Sect. 3]{Ebeling-Wall} or \cite{wall83}. 
 \end{exam}

We now consider the following situation: let $W$ be a complex projective  surface with exactly one singular point $w$, which is an elliptic Gorenstein singularity, and such that $K_W$ is ample. 
Let us consider the diagram
\begin{figure}[h]
    \begin{equation}\label{Diagram1}
\begin{tikzcd}
 & X \arrow{dr}{f}\arrow{dl}[swap]{\phi}\\
  W && S
\end{tikzcd}
\end{equation}
\end{figure}
where $X$ is the minimal resolution of $W$ and $S$ is a minimal model for $X$. 
We can write 
\begin{equation}\label{eq: pullback canonical} 
 \phi^*K_W = K_X + E
\end{equation}
where $E = \sum_i a_i E_i$ is the fundamental cycle (all equally fundamental cycles agree in this case, compare \cite[4.21]{Reid-chapters}).  In this paper, we assume that $w$ is an unimodal singularity, and therefore, we know that $E$ is reduced, see the references in Example \ref{ex: unimodal}.

\begin{lem}\label{lem: basic stuff}
 In the above situation we  have the following:
 \begin{enumerate}
  \item 
$ \chi(X) = \chi(W)-1$.
\item 
$ p_g(X) = \begin{cases}
             p_g(W)-1 & \text{ if $w$ is not a base-point of $|K_W|$;}\\
            p_g(W) & \text{ if $w$ is  a base-point of $|K_W|$.}\\
           \end{cases}
$

  \item For the plurigenera we have  $P_m(X)\leq P_m(W)$ for every $m>0$.
\item If $S$ is of Kodaira dimension $\kappa(S)\geq 0$ then $K_X^2\leq K_S^2\leq K_W^2$.
  \end{enumerate}
\end{lem}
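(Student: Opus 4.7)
My plan is to handle parts (1)--(3) uniformly by pushing sheaves along $\phi$, exploiting the two defining properties of an elliptic Gorenstein singularity: $\phi_*\ko_X = \ko_W$ (normality) and $R^1\phi_*\ko_X = \IC_w$. For (1), the Leray spectral sequence for $\ko_X$ immediately gives $\chi(X) = \chi(W) - \chi(\IC_w) = \chi(W) - 1$. For (2), I apply $\phi_*$ to the tautological restriction sequence
\[ 0 \to \ko_X(K_X) \to \ko_X(\phi^*K_W) \to \ko_E(\phi^*K_W|_E) \to 0, \]
using Grauert--Riemenschneider to kill $R^1\phi_*\omega_X$, the projection formula to identify $\phi_*\phi^*\omega_W$ with $\omega_W$, and the triviality of $\phi^*K_W|_E$ (because $\phi$ contracts $E$ to $w$) to see that the rightmost term pushes forward to $\IC_w$. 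The resulting short exact sequence
\[ 0 \to \phi_*\omega_X \to \omega_W \to \IC_w \to 0 \]
has its right-hand map equal to evaluation at $w$, and the dichotomy in (2) just records whether this evaluation is zero on $H^0$, i.e., whether $w$ is a base-point of $|K_W|$. For (3), the same projection-formula trick applied to $m K_X = \phi^*(mK_W) - mE$ identifies $\phi_*\ko_X(mK_X) \cong \omega_W^m \otimes \phi_*\ko_X(-mE)$, which is visibly a subsheaf of $\omega_W^m$; hence $P_m(X) \leq P_m(W)$.

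Part (4) I handle in two steps. The inequality $K_X^2 \leq K_S^2$ is standard: the morphism $f$ is a sequence of $n$ blow-downs of $(-1)$-curves, each of which raises $K^2$ by one, so $K_S^2 = K_X^2 + n$. For the remaining inequality, adjunction for the fundamental cycle $(K_X + E)\cdot E = 2 p_a(E) - 2 = 0$ gives $K_X \cdot E = -E^2$, and hence
\[ K_W^2 = (K_X + E)^2 = K_X^2 - E^2. \]
Combining the two identities, $K_W^2 - K_S^2 = -E^2 - n$, so $K_S^2 \leq K_W^2$ reduces to the combinatorial bound $n \leq -E^2$ on the number of contractions, which is where the hypothesis $\kappa(S) \geq 0$ must enter.

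I would deduce this bound by upgrading part (3) asymptotically. Plurigenera are birational invariants on smooth surfaces with $\kappa \geq 0$, so $P_m(X) = P_m(S)$; combined with (3) this gives $P_m(S) \leq P_m(W)$ for every $m$ and in particular forces $\kappa(W) \geq \kappa(S) \geq 0$. When $\kappa(S) = 2$, Riemann--Roch together with Kodaira vanishing for $m$ large identifies the leading term of $P_m$ on each side as $\tfrac12 K^2 m^2$, so passing to the limit in $P_m(S) \leq P_m(W)$ yields $K_S^2 \leq K_W^2$ at once. In the cases $\kappa(S) \in \{0, 1\}$ one has $K_S^2 = 0$ and the claim reduces to the non-negativity $K_W^2 \geq 0$, which I expect to follow from the same asymptotic growth estimate for $P_m(W)$. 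The main obstacle I anticipate is pinning down this borderline case cleanly; the $\kappa(S) = 2$ regime, which is the one needed in the later sections, falls out transparently.
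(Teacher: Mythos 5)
Your parts (1)--(3) follow essentially the paper's own argument, merely rephrased via pushforwards along $\phi$ instead of directly on global sections: Leray for (1), the adjunction sequence $0\to\omega_X\to\phi^*\omega_W\to\ko_E\to 0$ for (2) (the one point to make explicit is $h^0(E,\ko_E)=1$ for the fundamental cycle, which is standard), and the inclusion $\phi_*\ko_X(mK_X)\subseteq\omega_W^{\otimes m}$ for (3). In (4) the first inequality matches the paper, and your core idea for the second --- compare the asymptotics of $P_m(S)\le P_m(W)$ --- is also the paper's; but the paper executes it uniformly and thereby avoids both your case division on $\kappa(S)$ and the borderline case you leave open. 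Namely, since $K_S$ is nef ($S$ minimal with $\kappa(S)\ge 0$) and $K_W$ is nef (it is ample in every application; this is the standing hypothesis the proof uses), asymptotic Riemann--Roch for nef divisors \cite[Prop.~1.31]{Debarre} gives $h^0(mK_S)=\tfrac12 m^2K_S^2+O(m)$ and $h^0(mK_W)=\tfrac12 m^2K_W^2+O(m)$ with no vanishing theorem and no hypothesis on the Kodaira dimension, whence $K_S^2\le K_W^2$ in all cases at once. Two cautions about your route: first, do not invoke Kodaira vanishing on $W$ --- the exceptional unimodal points are Gorenstein but not log canonical, so Kodaira-type vanishing is not guaranteed there; the nef asymptotic estimate is the correct tool and needs no vanishing. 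Second, your detour through $K_W^2=K_X^2-E^2$ and the combinatorial bound $n\le -E^2$ is correct but superfluous, since you return to the plurigenus comparison anyway; and the case $\kappa(S)\in\{0,1\}$ that you flag as the ``main obstacle'' is closed in one line, because $K_W$ nef already forces $K_W^2\ge 0$.
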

\begin{proof}
 The first statement follows from the definition, the Leray spectral sequence and 
 \[\chi(\ko_X) = \chi(\phi_*\ko_X) -\chi(R^1\phi_*\ko_X) = \chi(\ko_W)-\chi(\IC_w).\]
 
 Note that $E$ has arithmetic genus one, so by adjunction we have the exact sequence
 \[ 0\to \omega_X\to \phi^*\omega_W = \omega_X(E) \to \ko_E\to 0.\]
 Taking cohomology and using the projection formula we get
 \[ 0 \to H^0(K_X) \to H^0(K_W) \to H^0(\IC_w) \to \dots\]
 and we get the two possibilities given for the geometric genus. 

Item  $(iii)$  follows from the analogous inclusion $H^0(mK_X) \into H^0(\phi^*mK_W) = H^0( mK_W)$.

For the last statement, note that $K_X^2\leq K_S^2$ and $P_m(X) = P_m(S)$ because $S$ is a minimal model for $X$. Since by assumption both $K_S$ and $K_W$ are nef, we can compare the leading terms of the formula for the plurigenera via the  asymptotic Riemann-Roch for a nef divisor \cite[Prop. 1.31]{Debarre} getting
\[
 \frac12 m^2 K_S^2 + \text{l.o.t.}  = P_m(S) \leq P_m(W) = \frac 12 m^2 K_W^2 +\text{ l.o.t.} 
\]
This shows the missing inequality $K_S^2\leq K_W^2$.
\end{proof}

It was observed in \cite{FPR17} that methods involving the canonical ring often extend to Gorenstein surfaces. For the invariants of particular interest to us, the result reads as follows.
\begin{prop}\label{prop: min res p_g=2}
Let $W$ be a normal projective surface with $K_W^2 = 1$, $K_W$ ample and Cartier and $p_g(W) =2$. Then $W$ is a double cover of $\IP(1,1,2)$ via the bicanonical map branched over the singular point and a branch divisor $\Delta$ disjoint from the singular point. 

If $W$ has a unique singularity which is an elliptic point, then the minimal resolution $X$ satisfies $p_g(X) = 1$ and $q(X)=0$ and  the effective canonical divisor $K_X$  is connected and the sum of a reduced and irreducible curve not contracted by $\phi$ and possibly an effective combination of $\phi$-exceptional curves.
\end{prop}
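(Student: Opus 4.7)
The first statement is a Gorenstein extension of the classical Horikawa--Reid analysis of I-surfaces, in the spirit of \cite{FPR17}. Since $K_W$ is ample and Cartier, Riemann--Roch combined with Kawamata--Viehweg vanishing gives $h^0(mK_W)=\chi(\ko_W)+\binom{m}{2}K_W^2$ for $m\geq 2$. Combined with $\chi(\ko_W)=3$ (equivalently $q(W)=0$, which can be seen by applying Debarre's inequality $K^2\geq 2p_g$ to a minimal model of the resolution and discarding the residual non-general-type possibilities via Lemma~\ref{lem: basic stuff}(iv)), this yields $h^0(K_W)=2$ and $h^0(2K_W)=4$. Choose a basis $x_0,x_1$ of $H^0(K_W)$ and an extra generator $y\in H^0(2K_W)$. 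The triple defines a morphism $\psi\colon W\to \IP(1,1,2)$; since $\psi^*\ko_{\IP(1,1,2)}(1)=K_W$ and $\ko_{\IP(1,1,2)}(1)^2=1/2$, the identity $K_W^2=\deg(\psi)\cdot \ko_{\IP(1,1,2)}(1)^2$ forces $\deg\psi=2$. As $K_W^2=1$, the base locus of $|K_W|$ is a single reduced point $p_0$, the unique preimage of the vertex $(0:0:1)$; hence $\psi$ ramifies over the vertex and the remaining branch divisor avoids it.

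The crux of the second statement is to show that $w$ is \emph{not} the base point $p_0$. Locally around the vertex, $\IP(1,1,2)$ is the $A_1$ quotient singularity $\IC^2/\{\pm 1\}$. Since the branch divisor of $\psi$ is disjoint from the vertex, $\psi$ is \'etale in codimension one over a neighborhood of the vertex; the only connected degree-two cover of $\IC^2/\{\pm 1\}$ with a single preimage over the origin is the universal cover $\IC^2\to \IC^2/\{\pm 1\}$, whose total space is smooth. Thus $p_0$ is a smooth point of $W$, so $w\neq p_0$ and $w$ is not in the base locus of $|K_W|$.

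Lemma~\ref{lem: basic stuff}(ii) now gives $p_g(X)=p_g(W)-1=1$. The Leray spectral sequence for $\phi$, with $R^1\phi_*\ko_X=\IC_w$, produces
\[
  0 \to H^1(\ko_W) \to H^1(\ko_X) \to \IC \to H^2(\ko_W) \to H^2(\ko_X) \to 0;
\]
since $p_g(X)=p_g(W)-1$, the connecting map $\IC\to H^2(\ko_W)$ is injective, hence $H^1(\ko_X)\isom H^1(\ko_W)$ and $q(X)=q(W)=0$. Finally, let $D_0\in|K_W|$ be the unique member through $w$, and write $\phi^*D_0=\widetilde{D}_0+mE$; from $\phi^*K_W=K_X+E$ we obtain $K_X\sim \widetilde{D}_0+(m-1)E$, and the local theory of elliptic Gorenstein singularities forces $m=1$. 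Hence $K_X=\widetilde{D}_0$ is the strict transform of the irreducible arithmetic genus-$2$ curve $D_0$, reduced and irreducible. The main obstacle is the local analysis of the bicanonical cover at the vertex; everything else follows from Lemma~\ref{lem: basic stuff} and standard intersection theory.
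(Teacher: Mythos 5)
Your overall route (Riemann--Roch to get the bicanonical image, then a local analysis of the degree-two cover at the vertex of $\IP(1,1,2)$) differs from the paper's, which restricts to a general canonical curve and uses Reid's hyperplane section principle to compute the canonical ring, realising $W$ as a degree-ten hypersurface in the smooth locus of $\IP(1,1,2,5)$; the double cover structure and the position of the branch divisor are then read off from the equation. Measured against either route, your argument has a genuine gap at the crux. You write that, since the base locus of $|K_W|$ is the unique preimage $p_0$ of the vertex, ``hence $\psi$ ramifies over the vertex and the remaining branch divisor avoids it.'' The second half does not follow: a finite degree-two cover of the quadric cone whose branch divisor passes through the vertex still has a single point over the vertex, so the one-point fibre says nothing about whether the divisorial branch locus meets the vertex. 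Since your proof that $p_0$ is a smooth point of $W$ (hence that $w\neq p_0$, which is what the second statement needs) rests on the cover being \'etale in codimension one near the vertex, the argument as written does not close. In the paper this is exactly what the canonical ring computation supplies: the degree-ten equation has nonzero $y^5$-coefficient because $W$ avoids the singular points of $\IP(1,1,2,5)$, so the quintic branch divisor misses the vertex. (A shortcut is available from facts you already state: two distinct members of $|K_W|$ have no common component and meet with total multiplicity $K_W^2=1$, so $x_0,x_1$ generate $\gothm_{p_0}$ and $\ko_{W,p_0}$ is regular; this gives smoothness at $p_0$ directly, without discussing the branch divisor at all.)

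Two further steps are asserted rather than proved. First, $q(W)=0$: Debarre's inequality only handles the case where the minimal model $S$ is of general type, and Lemma~\ref{lem: basic stuff}(iv) does not ``discard'' the possibilities $\kappa(S)=0,1$ with positive irregularity; these need a separate argument (the paper imports this from \cite{FPR17}). Note also that Kawamata--Viehweg vanishing is not directly applicable, since the exceptional unimodal points are precisely \emph{not} log canonical; the vanishing $h^1(mK_W)=0$ for $m\geq 2$ requires the Gorenstein-surface version used in \cite{FPR17}. Second, for $\psi$ to be a morphism you need a section of $2K_W$ not vanishing at $p_0$, i.e.\ that $p_0$ is not a base point of $|2K_W|$; this is again part of what the canonical ring computation delivers and is not automatic. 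The final step ($K_X$ reduced and irreducible) is fine in outline, but you should record why \emph{every} member of $|K_W|$, not just a general one, is reduced and irreducible (each component $C$ has $K_W.C\geq 1$ while $K_W.D_0=1$), and justify why the exceptional part of $\phi^*D_0$ is exactly the fundamental cycle rather than waving at ``the local theory.''
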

\begin{proof}
 Consider a general canonical curve $C\in |K_W|$. Then with the same proof as in \cite[Lemma 4.1]{FPR15b} $C$ is an irreducible and reduced Gorenstein curve. The arguments from \cite[Thm 3.3]{FPR17} apply verbatim in our context but let us sketch them for the benefit of the reader: the restriction $L = K_W|_C$ is a line bundle on $C$ with $h^0(L) = 1$ and $2L = K_C$. Its section ring $R(C, L)$ is easily calculated. Reid's hyperplane section principle gives us the structure of the canonical ring of $W$, which then realises $W$ as  a hypersurface of degree ten contained in the smooth locus of $\IP(1,1,2,5)$. Thus the bicanonical map 
 factors as \[ 
  \begin{tikzcd}
W \rar &  \IP(1,1,2) \rar[hookrightarrow]{|\ko(2)|} & \IP^3
  \end{tikzcd}
 \]
and realises $W$ as a double cover of the quadric cone branched over the vertex and a quintic section not containing the vertex. 
 In particular,  the base-point of the canonical linear system is a smooth point of $W$, so  the unimodal point $w$ is not a base-point of $K_W$ and by Lemma \ref{lem: basic stuff} we have $p_g(X) = 1$. 
 
 Since every canonical curve on $W$ is reduced and irreducible,  $ K_X = \varphi^*K_W - E$ contains exactly one reduced and non-exceptional component. 
 \end{proof}
\section{Surfaces with exceptional unimodal double points of type $E_n$}\label{sect: En}
In this section we consider the situation of \eqref{Diagram1} where $K_W^2 = 1$, $p_g(W) = 2$ and $q(W)=0$ and $W$ has  a unique singular point of type $E_{12}, E_{13}$, or  $E_{14}$ in Arnold's notation. In these cases the exceptional divisor $E$ of $\varphi$ is reduced and we have
\[ E^2 = - 1,\; K_X.E = 1, \text{ and } K_X^2 = 0.\]
Refer to Table \ref{tab: info En} for information on the singularities of Type $E_{12}, E_{13}$, or  $E_{14}$, extracted from the references given in Example \ref{ex: unimodal}
\begin{table}
  \caption{The exceptional unimodal points of type $E_n$.}
  \label{tab: info En}
 \begin{tabular}{ccccc}
 \toprule
 Type & Kodaira fibre & Blow ups & $E$ & equation \\
 \midrule
$E_{12}$&
 $II$&$(1)$ &
 \begin{minipage}[c]{\Breite}
 \centering
  \begin{tikzpicture}[curve]
\begin{scope}[scale=0.5]
\draw (1,2) to[out  = -90, in = 0] (0,0) to [in = 90,  out = 0] (1, -2);
\node at (1,-2.5) {$E_1$};
\node at (0.9,2.5){$-1$};

\end{scope} 
\end{tikzpicture} 
 \end{minipage}
&$z^3+y^7+ay^5z$   
\\
$E_{13}$& $III$&$(1,0)$& 
\begin{minipage}[c]{\Breite}
 \centering
\begin{tikzpicture}[curve]
\begin{scope}[ scale=0.5, xshift = 7
cm]
\draw (1,2) to[out  = -135, in = 90] (0,0) to [in = 135,  out = -90] (1, -2);
\draw (-1,2) to[out  = -45, in = 90] (0,0) to [in = 45,  out = -90] (-1, -2);
\node at (-1,-2.5) {$E_1$};
\node at (-1.2,2.5){$-3$};
\node at (1,-2.5) {$E_2$};
\node at (1,2.5){$-2$};

\end{scope}
\end{tikzpicture}
\end{minipage}
&
$z^3+y^5z +ay^8$ 
\\
  $E_{14}$&$IV$&$(1,0,0)$ & 
\begin{minipage}[c]{\Breite}
 \centering
 \begin{tikzpicture}[curve]
\begin{scope}[scale=0.5, xshift = 14
cm]
\draw (0:2) to (180:2);
\draw (60:2.3) to (240:2.3);
\draw (120:2.3) to (300:2.3);
\node at (-1.2,-2.5) {$E_2$};
\node at (1.2,-2.5) {$E_3$};
\node at (1,2.5){$-2$};
\node at (-1.2,2.5){$-2$};
\node at (-2.6,0) {$E_1$};
\node at (2.6,0) {$-3$};
\end{scope}
  \end{tikzpicture}
\end{minipage}
  &  $z^3+ y^8 +ay^6z$   \\
\bottomrule
\end{tabular}
 \end{table}

Let us start with three  constructions, which all follow the same pattern explained in Figure \ref{fig: construction diagram}.
\begin{figure}[h]
 \caption{Schematic construction of surfaces with $E_n$ type singularities}
 \label{fig: construction diagram}
 
 \begin{tikzcd}
  \bar X \dar[swap]{\bar\theta} & \hat X \lar[swap]{\bar \sigma} \arrow{dr}{\sigma} \arrow{dl}[swap]{\hat\theta}\\
  P \dar && X  \arrow[dashed]{ll}[swap]{\theta}\rar {\phi} \dar{\pi} & W\\
   \IP^1 \arrow[equal]{rr} &&\IP^1
   \end{tikzcd}
 \begin{minipage}{.4\textwidth}
  \begin{itemize}
   \item P a Hirzebruch surface
   \item $\bar \theta$ a double cover
   \item $\bar \sigma$ the minimal resolution
   \item $\sigma$ a blow up
   \item $\phi$ the resolution of an $E_n$ singularity
  \end{itemize}
 \end{minipage}
\end{figure}
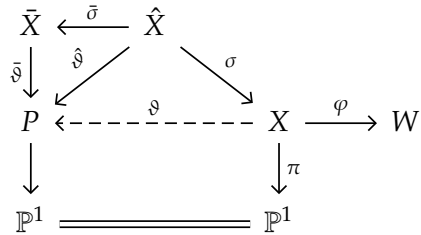


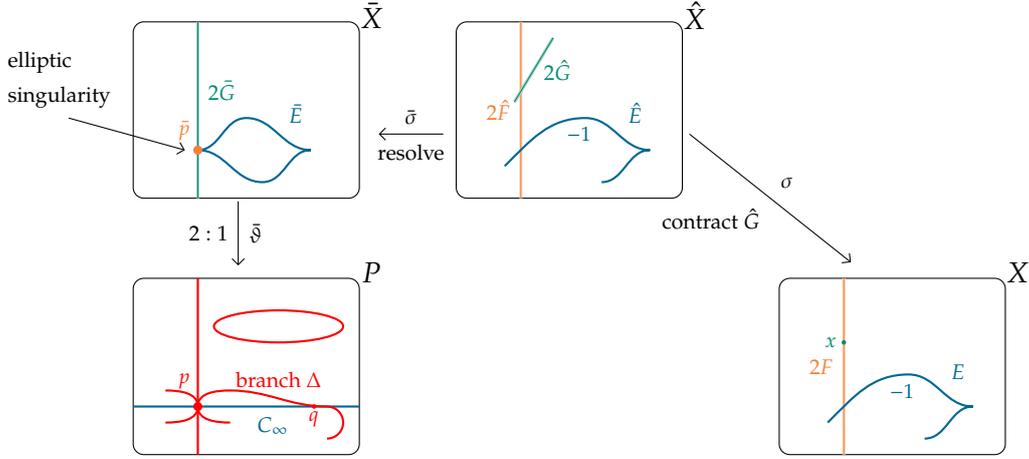
\begin{figure}
   \caption{Schematic construction, type $E_{12}$ (picture)} \label{fig: picture E12}
  \begin{tikzpicture}[scale = .85]
   
   \begin{scope}
        \draw[rounded corners] (-2.0, 1.5) rectangle (1.5, -1.25);
            \node[] at (1.7,1.6) {$P$};
        \draw[E1] (-2, -.5) to node[below right] {$C_\infty$} ++(3.5, 0);

\draw[branch] (-1, -1.25) to ++ (0, 2.75);
\draw [branch] (-1.5, -.25) to[out= 0, in =90] ++(.5, -.25) to[in = 0,out = -90] ++(-.5, -.25);
\draw [branch] (-.5, -.75) to[out= 180, in =-90] ++(-.5, .25) to[in = 180,out = 90] ++(.5, .25)
to[out = 0 , in  = 180] 
node[above] {branch $\Delta$}
++ (1.5, -.25) to[out= 0, in =90] ++(.25, -.25) to[in = 0,out = -90] ++(-.25, -.25);
\node[branch, scale=0.7] at (-1.2,-.1) {$p$};
\fill[branch] (-1, -.5) circle (2pt);
\draw[branch] (.25,.75) ellipse (1cm and .25cm);

\fill[branch] (0.8, -.5) circle (1pt);
\node[branch, scale=0.7] at (0.8,-.7) {$q$};
  \end{scope}

    \begin{scope}[xshift = 0cm, yshift = 4cm] 
   
        \draw[rounded corners] (-2.0, 1.5) rectangle (1.5, -1.25);
            \node[] at (1.7,1.6) {$\bar X$};
  
\draw[2G] (-1, -1.25) to  ++ (0, 2.75);
\draw[G] (-1, -1.25) to node[above right] {$2\bar G$} ++ (0, 2.75);

        \draw[E1] (-1, -.5)  to[out = 0, in = 180] ++(.75, .5) to[out = 0, in = 180] node[above right] {$\bar E$} ++(1,- .5) to[out = 180, in = 0]  ++(-.75,- .5)
         to[out = 180, in = 0] ++(-1,.5);
\draw[pfeil] (-0.375, -1.3) -> node[right] {$\bar\theta$} node[left]{$2:1$} ++ (0, -1); 

 \fill[F] (-1, -.5) circle (2pt);
 \node[F, scale=0.7] at (-1.2, -.2) {$\bar{p}$};
 \draw[<-] (-1.2, -.5)  -- ++(-1.8, .5) node[above, text width=1.6cm] {\scriptsize elliptic singularity};
  \end{scope}

      \begin{scope}[xshift = 5cm, yshift = 4cm]
   
        \draw[rounded corners] (-2.0, 1.5) rectangle (1.5, -1.25);
            \node[] at (1.7,1.6) {$\hat X$};
  
\draw[2F] (-1, -1.25) to ++ (0, 2.75);
\draw[F] (-1, -1.25) to node[left] {$2\hat F$} ++ (0, 2.75);
\draw[2G] (-1.1, .25) to ++ (.6, 1);
\draw[G] (-1.1, .25) to  node[right] {$2\hat G$}  ++ (.6, 1);

        \draw[E1] (-1, -.5) ++ (-.25, -.25) to[out = 45, in = 180]  ++(1.25, .75) to[out = 0, in = 180]node[above right] {$\hat E$}  ++(1,- .5) to[out = 180, in = 0] ++(-.75,- .5) 
         ;
         \node[E1,scale=0.7] at (-.1,-.25){$-1$}; 

         \draw[pfeil] (-2.2, -0.25) -> node[below] {resolve} node[above]{$\bar\sigma$} ++ (-1, 0); 

         \draw[pfeil] (1.61, -0.25) -> node[below left] {contract $\hat G$} node[above right]{$\sigma$} ++ (2.5, -2); 
         
  \end{scope}
  
      \begin{scope}[xshift = 10cm, yshift = 0cm]
   
        \draw[rounded corners] (-2.0, 1.5) rectangle (1.5, -1.25);
            \node[] at (1.7,1.6) {$X$};
  
\draw[2F] (-1, -1.25) to ++ (0, 2.75);
\draw[F] (-1, -1.25) to node[left] {$2F$}++ (0, 2.75);
\fill[G] (-1, .5) circle (1pt);
\node[G, scale=0.7] at (-1.2, .5) {$x$};

        \draw[E1] (-1, -.5) ++ (-.25, -.25) to[out = 45, in = 180]  ++(1.25, .75) to[out = 0, in = 180] node[above right] {$ E$}++(1,- .5) to[out = 180, in = 0] ++(-.75,- .5);
        \node[E1,scale=0.7] at (-.1,-.25){$-1$};
  \end{scope}
\end{tikzpicture}

\end{figure}

On a Hirzebruch surface $P = \IF_n\to \IP^1$ we denote by $\Gamma_p$ the fibre through a point $p$ and  by $\Gamma$ a general fibre. If $n>0$ then $C_\infty$ denotes the unique negative section. In $\IF_0 = \IP^1\times \IP^1$ we fix an arbitrary section $C_\infty$ of self-intersection zero. 

Since the Picard group of a Hirzebruch surface does not have $2$-torsion, a double cover $\bar X \to P$ is uniquely determined by a $2$-divisible divisor $\Delta$ 
and the singularities of $\bar X$ are controlled by the singularities of $\Delta$. One particular type plays an important role.

\begin{rem}\label{rem: admissible33}
Assume that $\Delta$ has a $[3,3]$-point at $p$, a triple point with an infinitesimally near triple point and consider the   resulting surface singularity, which is elliptic of degree one in the sense of Section \ref{sect: prelims}.

We are interested in the case where the exceptional divisor $E$ in a minimal resolution is (locally analytically isomorphic to) the blow up of a Kodaira fibre of type  $I_n$ for $n\geq 0$ in a smooth point. The possible branch divisors can easily be worked out by hand (compare \cite{FPR17, Anthes20}) and the corresponding singularities in Arnold's list are (see \cite{Karras80}  )
\[T_{2,3,n+6}: x^2 + y^3+ z^{n+6} + \lambda xyz = \left(x+ \frac\lambda2 yz\right)^2 + y^3 - \frac{\lambda^2}{4}y^2z^2 + z^{n+6}. \]
For $n = 0$ we get a simple elliptic singularity where the value of $\lambda$ (with  $\lambda^6\neq 432$)  determines the elliptic curve and   for $n\geq 1$ we get a cusp singularity and  can choose  $\lambda=1$.

In suitable local analytic coordinates the branch divisor $\Delta$ is thus given by $y^3 + \lambda^2y^2z^2 + z^{n+6}$.
Blowing up twice at the origin it is easy to check that we can write the germ of $\Delta$ as $\Delta' + \Delta''$, where
\begin{enumerate}
 \item $\Delta''$ is smooth at $p$,
 \item $\Delta'$ has an $A_{n+3}$ singularity at $p$,
 \item $(\Delta'.\Delta'')_p = 4$.
\end{enumerate}

\end{rem}
In the construction of examples we leave some claims on intersection numbers to the reader, because we will have to \emph{reverse engineer} these constructions anyway in Section \ref{sect: proof of En}.

\begin{exam}
\label{ex: E12}
 Let $P = \IF_0  = \IP^1\times \IP^1$. Consider a curve $\Delta = \Gamma_p + \Delta'\in |4C_\infty + 6 \Gamma|$ with the following properties
 \begin{itemize}
  \item $\Delta$ has a $[3,3]$-point at $p = \Gamma_p \cap C_\infty$ as in Remark \ref{rem: admissible33},
  \item $\Delta|_{C_\infty} = 3p + 3q$ for a second point $q$,
  \item $\Delta$ is smooth outside $p$.
\end{itemize}
We claim that such divisors form an irreducible open subset of a linear system.
Let us spell this out concretely: Consider the $\IZ^2$ graded polynomial ring $S = \IC[x_0, x_1, t_0, t_1]$ with degrees
\[ \mat{ x_0 & x_1& t_0 &t_1\\ 1& 1& 0&0\\0&0&1&1}.\]
Let $C_\infty = \{ x_1 = 0 \}$, $\Gamma_p = \{t_0 = 0 \}$, $p = ((1:0), (0:1))$ and $q = ((1:0), (1:0))$.
Then $\Delta = \Gamma_p+ \Delta'$ is defined by an equation $f = t_0f'$ of bidegree $\mat{4\\6}$. Let us make explicit the condition for $\Delta$ to have a $[3,3]$-point: $t_0, x_1$ are local (inhomogeneous) coordinates at $p$ and $f = f_0 f'$  has at least a  triple point at $p$ if and only if $f'$ does not contain the monomials $1, t_0, x_1$. Assuming the triple point we check for the infinitesimally near triple point by blowing up $p$. The relevant coordinates are $T_0, x_1$ where $t_0 = T_0 x_1$ and the strict transform is given by 
\[ \frac {f(T_0x_1, x_1)}{x_0^3} = T_0 \frac{f'(T_0x_1, x_1)}{x_1^2}.\]
This has at least a triple point in $0$ if $f'$ does not contain the monomials $t_0x_1, x_1^2, x_1^3$.

Thus in total, the first three conditions imply that 
\[f' \in (t_0^2,\, t_0x_1^2,\, x_1^4) \cap   (t_0^2t_1^3,\, x_1) 
,\]
therefore the divisors $\Delta$ with at least these singularities form a linear system. If we can exhibit one example with exactly the requested singularities, an open subset of the linear system will have this property. One explicit equation showing the existence of such branch divisors is
\[ t_0\left(x_0^4t_1^3t_0^2+x_1^4t_1^5+x_0^3x_1t_0^5+ x_0x_1^3t_0^5 \right).\]

Now let $ \bar \theta \colon \bar X \to P$ be the double cover branched over $\Delta$. The preimage $2\bar{G} = \bar\theta^*\Gamma_p$ is a double fibre passing through an elliptic singularity of degree one of type $T_{2,3,n+6}$, where  we assume $n = 0$ or $n=1$.

Let $\bar\sigma\colon \hat X \to \bar X$ be the resolution of the elliptic point and call the exceptional divisor $\hat F$ and $\hat G = {(\inverse{\bar\sigma})}_*\bar G$. Then $2\hat F + 2 \hat G$ is a (double) fibre of the fibration $\hat X \to P \to \IP^1$, so 
\[0 = (\hat F + \hat G)^2 =\hat F^2 + 2 \hat F \hat G + \hat G^ 2  = -1 + 2 + \hat G^2  = 1+\hat G^2\] and $\hat G$ is a $(-1)$-curve.
We denote the contraction of $\hat G$ by   $\sigma\colon \hat X \to X$. 
Then the induced map $\pi\colon X \to \IP^1$ is a relatively minimal elliptic fibration.

Now we  follow the  section $C_\infty\subset P$ along this construction, compare again Figure \ref{fig: picture E12}: the pullback under the double cover gives a bisection $\bar E$ of the fibration $\bar X \to \IP^1$, where $\bar E \to C_\infty$ is branched over $3p + 3q$, thus has two cusp singularities $A_2$. Resolving the elliptic point $\hat X \to \bar X$ resolves the singularity over $p$, while the contraction of $\hat G$ happens away from the strict transform. Therefore in $X$ we get  a bisection $E$ which is a rational curve with one cusp singularity $A_2$ and $E^2 = -1$.
The contraction of $E$ gives a surface $W$ with one singularity of type $E_{12}$, compare Example \ref{ex: unimodal}.
It is straightforward to check that $K_W$ is ample, $K_W^2 = 1$,  and $p_g(W) = 2, q(W) =0$. 
\end{exam}

\begin{exam}
\label{ex: E13}
 Let $P = \IF_1$. Consider a curve $\Delta = \Gamma_p + \Delta'\in |4C_\infty + 8 \Gamma|$ with the following properties
 \begin{itemize}
  \item $\Delta$ has a $[3,3]$ point at $p = \Gamma_p \cap C_\infty$ as in Remark \ref{rem: admissible33},
  \item $\Delta|_{C_\infty} = 3p + q$ for a second point $q$,
  
 \item along the fibre $\Gamma_q$ through $q$ the curve $\Delta$ has one additional singularity of type $A_1$ or $A_2$ in a point $q'\neq q$ and the local intersection number is $(\Delta, \Gamma_q)_{q'} = 2, 3$,
 \item $\Delta$ is smooth outside $p, q'$. 
     \end{itemize}
Again, such divisors form an open subset of a linear system, as we will now explain: consider the $\IZ^2$ graded ring $S = \IC[t_0, t_1, x_0, x_1]$ with grading
\[ \begin{pmatrix}
    t_0 & t_1 & x_0 & x_1 \\
    1& 1 & 0 & -1\\
    0& 0& 1& 1
   \end{pmatrix}
\]
This is the Cox ring of $\IF_1$ in the sense of \cite[Sect. 5.2]{cls2012}, but we only use that $(t_0: t_1)$ are the homogeneous coordinates on the base, that $C_\infty = \{x_1 = 0\}$ and that $H^0(4C_\infty + 8 \Gamma)$ is the linear space of polynomials of bidegree $\mat{4\\4}$.
We may normalise coordinates such that $p $ is defined by $x_1 = t_0 =0$, that $q $ is defined by $x_1 = t_1 =0$ and that  $q' $ is defined by $x_0= t_1 =0$.
Writing an equation for $\Delta$ as $f = t_0 f'$ we first write the closed conditions that $\Delta'$ has at least a $[2,2]$-point tangent to the fibre in $p$, contains $q$, and has at least a double point at $q'$, which similar to the previous example can be written as 
\[ f' \in V = \left\{ g \in (t_0^2,\, t_0x_1^2, \, x_1^4) \cap (t_0^2 t_1,\, x_1) \cap (t_1, \,x_0)^2 \mid \deg g  = \mat{3\\4}\right\}. \]
Since $V$ contains the elements $ t_0^2t_1^2x_0^4$, $ t_0^2t_1^6x_1^4$, $t_0^5x_0^3x_1$, and $t_1^8x_1^4$, the base locus of $V$ is supported at $\{p, q, q'\}$ and therefore the general element is smooth elsewhere by Bertini.
It is now sufficient to exhibit at each base point, elements of $V$ that have the correct behaviour at this point. To this end consider
\begin{itemize}
 \item $ \left(t_0^2t_1^2x_0^4- t_1^8x_1^4 \right)$, which gives the $[2,2]$ point at $p$, 
 \item $\left( \left( t_0x_0+ t_1(t_1-\lambda t_0) x_1\right) \left(t_0x_0+t_1(t_1-\mu t_0) x_1\right) x_1^2t_0^4 -t_1^5x_0^3x_1\right)$, which gives --  according to different choices for $\lambda, \mu$ -- the possibilities for $\Delta '$ at $q'$ in the second condition,
 \item $ t_0^3t_1x_0^4$, which is smooth at $q$.
\end{itemize}
A general linear combination of these elements gives an equation for $\Delta'$ with the prescribed behaviour and thus the $\Delta'$ with a node not tangent to the fibre in $q'$ form an open, hence irreducible subset of $V$.
The $\Delta'$ with worse behaviour at $q'$ are in the closure; note that the other given polynomials are considerably worse at $q'$, so appropriate choices for $\lambda$ and $\mu$ show that these cases exist.

     Now let $ \bar \theta \colon \bar X \to P$ be the double cover branched over $\Delta$. The preimage $2\bar{G} = \bar\theta^*\Gamma_p$ is a double fibre passing through an elliptic singularity of degree one of type $T_{2,3,n+6}$, where  we assume $n = 0$ or $n=1$.
Let $\bar\sigma\colon \hat X \to \bar X$ be the resolution of the elliptic point and of the $A_n$ surface singularity over the point $q'$. The preimage of $\Gamma_{q'}$ is then a singular fibre of type $I_2, I_3, III, IV$, compare Table \ref{tab: E13 sing fibre}.

Let $\bar \sigma \colon \hat X \to X$ be the contraction of the resulting $(-1)$-curve $\hat{G}={(\inverse{\bar\sigma})}_*\bar G$.
Then the induced map  $\pi\colon X \to \IP^1$ is a relatively minimal elliptic fibration. The strict transform of $C_\infty$ in $X$ is a bisection $E_1$,  which is a smooth rational curve with self-intersection $-3$. Together with the strict transform  $E_2 $ of the preimage of  $\Gamma_q$ we find a configuration of curves that can be contracted to give a surface $W$ with  a singularity of type $E_{13}$. 

It is straightforward to check that $K_W$ is ample, $K_W^2 = 1$,  and $p_g(W) = 2, q(W) =0$. 
\end{exam}

\begin{table}
 \caption{Possible configurations in the second fibre  for $E_{13}$
 }
  \label{tab: E13 sing fibre}
   \begin{tabular}{cccc}
 \toprule
  Kodaira fibre & in $\hat X$ and $X$ & in $\bar X$ & branch in $P$\\
  \midrule
$I_2$ & 
 \begin{minipage}[c]{\Breite}
 \centering
\begin{tikzpicture}
\draw[fibre] (-.1, .5) to[ bend right, looseness = 1.1] ++(0,1.25 );

\draw[E1] (-1, .25) 
node[above]{$E_1$}
to[out= 0, in =90] (0,0) to[in = 0,out = -90] (-1, -.25);
\draw[Ei] (0, -.25) to 
node[left, pos = 0.7] { $E_2$}
++(0,2);
   \end{tikzpicture} \end{minipage}
   &
 \begin{minipage}[c]{\Breite}
 \centering
\begin{tikzpicture}
\draw[E1] (-1, .25) to[out= 0, in =90]  (0,0) to[in = 0,out = -90] (-1, -.25);
\draw[Ei] (0, -.25) -- ++(0,1) to[out = 90, in = 135] ++ (0.3,.5) to [out = -45, in = 35] ++ (-.5, -.5);

 \fill[sing] (0, .865) node[above left]{$A_1$} circle (2pt);
   \end{tikzpicture} \end{minipage}
&
 \begin{minipage}[c]{\Breite}
 \centering
\begin{tikzpicture}
\draw[E1] (-1, 0) node[above] {$C_\infty$} -- ++ (1.25, 0);
\draw[Ei] (0, -.25) to
node[left, pos = 0.7] { $\Gamma_q$} ++(0,2);
\draw[branch] 
(-.2, 1.65) to[out = -45, in = 45] (0, .75) to[out = 225, in = 135, loop] ++(0,0)
node[right] {$q'$}
to[out = -45, in = 45] 
node[right] {$\Delta$}
(0,0)
node[below right] {$q$}
to
++ (-.2, -.2);
   \end{tikzpicture} \end{minipage}
\\
$I_3$ & 
 \begin{minipage}[c]{\Breite}
 \centering
\begin{tikzpicture}
\draw[fibre] (-.1, .5) to ++(0.5,.75);
\draw[fibre] (-.1, 1.5) to ++(0.5,-.75);
\draw[E1] (-1, .25) to[out= 0, in =90] (0,0) to[in = 0,out = -90] (-1, -.25);
\draw[Ei] (0, -.25) -- ++(0,2);
   \end{tikzpicture} \end{minipage}
   &
 \begin{minipage}[c]{\Breite}
 \centering
\begin{tikzpicture}
\draw[E1] (-1, .25) to[out= 0, in =90] (0,0) to[in = 0,out = -90] (-1, -.25);
\draw[Ei] (0, -.25) -- ++(0,1) to[out = 90, in = 135] ++ (0.3,.5) to [out = -45, in = 35] ++ (-.5, -.5);

 \fill[sing] (0, .865) node[above left]{$A_2$} circle (2pt);
\end{tikzpicture} \end{minipage}
&
 \begin{minipage}[c]{\Breite}
 \centering
\begin{tikzpicture}
\draw[E1] (-1, 0) -- ++ (1.25, 0);
\draw[Ei] (0, -.25) -- ++(0,2);
\draw[branch] 
(-.2, 1.65) to[out = -45, in = 0] (0, .75) to[out = 0, in = 45] (0,0) to ++ (-.2, -.2);
   \end{tikzpicture} \end{minipage}
\\
$III$ & 
 \begin{minipage}[c]{\Breite}
 \centering
\begin{tikzpicture}
\draw[fibre] (.5, 1.5) to[out  = 225, in= 90] ++(-.5, -.75) to[out  = -90, in= 135] ++(.5, -.75);
\draw[E1] (-1, .25) to[out= 0, in =90] (0,0) to[in = 0,out = -90] (-1, -.25);
\draw[Ei] (0, -.25) -- ++(0,2);
   \end{tikzpicture} \end{minipage}
   &
 \begin{minipage}[c]{\Breite}
 \centering
\begin{tikzpicture}
\draw[E1] (-1, .25) to[out= 0, in =90] (0,0) to[in = 0,out = -90] (-1, -.25);
\draw[Ei] (0, -.25) -- ++(0,1.5) to[out = -90, in = 135] ++ (0.25,-.5);
  \fill[sing] (0, 1.25) node[above left]{$A_1$} circle (2pt);
  \end{tikzpicture} \end{minipage}
&
 \begin{minipage}[c]{\Breite}
 \centering
\begin{tikzpicture}
\draw[E1] (-1, 0) -- ++ (1.25, 0);
\draw[Ei] (0, -.25) -- ++(0,2);
\draw[branch] 
(.5, 1.5) to[out = 225, in = 0] (0, 1.25) to[out = 180, in = 90, loop ] ++ (0,0) to[out = -90, in = 135] (0,0) to[out = -45] ++ (.2, -.2);
   \end{tikzpicture} \end{minipage}
\\$IV$ & 
 \begin{minipage}[c]{\Breite}
 \centering
\begin{tikzpicture}
\draw[fibre] (0, 1.25) ++ (-.2, -.2) to ++ (.6, .6);
\draw[fibre] (0, 1.25) ++ (-.2, .2) to ++ (.6, -.6);
\draw[E1] (-1, .25) to[out= 0, in =90] (0,0) to[in = 0,out = -90] (-1, -.25);
\draw[Ei] (0, -.25) -- ++(0,2);
   \end{tikzpicture} \end{minipage}
   &
 \begin{minipage}[c]{\Breite}
 \centering
\begin{tikzpicture}
\draw[E1] (-1, .25) to[out= 0, in =90] (0,0) to[in = 0,out = -90] (-1, -.25);
\draw[Ei] (0, -.25) -- ++(0,1.5) to[out = -90, in = 135] ++ (0.25,-.5);
\fill[sing] (0, 1.25) node[above left]{$A_2$} circle (2pt);
   \end{tikzpicture} \end{minipage}
&
 \begin{minipage}[c]{\Breite}
 \centering
\begin{tikzpicture}
\draw[E1] (-1, 0) -- ++ (1.25, 0);
\draw[Ei] (0, -.25) -- ++(0,2);
\draw[branch] 
(.25, 1) to[out = 120, in = -90, bend left] (0, 1.5) to[out = -90,  in = 135, looseness = 1.5] (0,0) to[out = -45] ++ (.2, -.2);
   \end{tikzpicture} \end{minipage}
\\
   
 \bottomrule
 \end{tabular}

\end{table}

\begin{exam}
\label{ex: E14}
 Let $P = \IF_1$. Consider a curve $\Delta = \Gamma_p + \Delta'\in |4C_\infty + 8 \Gamma|$ with the following properties
 \begin{itemize}
  \item $\Delta$ has a $[3,3]$ point at $p = \Gamma_p \cap C_\infty$  as in Remark \ref{rem: admissible33},
  \item $\Delta|_{C_\infty} = 3p + q$ for a second point $q$,
   \item the curve $\Delta$  is tangent to $\Gamma_q$, the fibre through $q$,
   \item the curve $\Delta$ has one additional singularity of type $A_1$ or $A_2$ in a point $q'\in \Gamma_q\setminus\{ q\}$, 
 \item $\Delta$ is smooth outside $p, q'$. 
     \end{itemize}
 Arguing as in Example \ref{ex: E13}, the set of curves $\Delta'\in |4C_\infty+7\Gamma|$ that satisfy the first three conditions and have a node at $q'$  is a (non-empty) open subset of  a linear subsystem $V$, whose closure  also contains the second possibility from Table \ref{tab: E14 sing fibre}. Thus these $\Delta$ form an irreducible family. 
     
Now let $ \bar \theta \colon \bar X \to P$ be the double cover branched over $\Delta$. The preimage $2\bar{G} = \bar\theta^*\Gamma_p$ is a double fibre passing through an elliptic singularity of degree one of type $T_{2,3,n+6}$, where  we assume $n = 0$ or $n=1$.
Let $\bar\sigma\colon \hat X \to \bar X$ be the resolution of the elliptic point and of the $A_n$ surface singularity over the point $q'$. The preimage of $\Gamma_{q'}$ is then a singular fibre of type $I_3$ or $  I_4$, compare Table \ref{tab: E14 sing fibre}.

Let $\sigma \colon\hat X \to X$ the contraction of the resulting $(-1)$-curve $\hat{G}={(\inverse{\bar\sigma})}_*\bar G$.
Then the induced map  $\pi\colon X \to \IP^1$ is a relatively minimal elliptic fibration. The strict transform of $C_\infty$ in $X$ is a bisection $E_1$,  which is a smooth rational curve with self-intersection $-3$. 
Together with the strict transform  $E_2+E_3$ of the preimage of  $\Gamma_q$ we find a configuration of curves that can be contracted to give a surface $W$ with  a singularity of type $E_{14}$. 

It is straightforward to check that $K_W$ is ample, $K_W^2 = 1$,  and $p_g(W) = 2, q(W) =0$. 
\end{exam}

\begin{table}
 \caption{Configurations in the second fibre for $E_{14}$
  }
  \label{tab: E14 sing fibre}
   \begin{tabular}{cccc}
\toprule
  Type & in $\hat X$ and $X$ & in $\bar X$ & branch in $P$\\
  \midrule
$I_3$ & 
 \begin{minipage}[c]{\Breite}
 \centering
\begin{tikzpicture}
\draw[fibre] (-.1, 1.7) to ++(0.4,-1.2);
\draw[Ei] (-.08, -.24) to node[right]{$E_2$} ++(.38,1.14);
\draw[Ei]  (.1,1.6) to[  in = 135, out  = -135] 
node[left, pos = .4] {$E_3$}
(0,0) to ++(-45:0.25);
\draw[E1] (.5, 0) to ++(-1.25,0) to[in = 180,out = 180] node[left]{$E_1$} 
++ (0, .25)
to ++ (.25,0);
   \end{tikzpicture} \end{minipage}
   &
    \begin{minipage}[c]{\Breite}
 \centering
\begin{tikzpicture}
\draw[Ei]  (-.1,1.6) to[  in = 45, out  = -45] (0,0) to ++(-.1, -.1);
\draw[Ei]  (.1,1.6) to[  in = 135, out  = -135] (0,0) to ++(.1, -.1);
\draw[E1] (.5, 0) to ++(-1.25,0) to[in = 180,out = 180] ++ (0, .25) to ++ (.25,0);
\fill[sing] (0, 1.5) node[left]{$A_1$} circle (2pt);
   \end{tikzpicture} \end{minipage}
&
 \begin{minipage}[c]{\Breite}
 \centering
\begin{tikzpicture}
\draw[E1] (-1, 0)  node[above] {$C_\infty$} -- ++ (1.25, 0);
\draw[Ei] (0, -.25) to node[left, pos = 0.8] { $\Gamma_q$}  ++(0,2);
\draw[branch] 
(.2, 1.65) to[out = -45, in = 45] 
node[right]{$\Delta$}
(0, 1) to[out = 225, in = 135, loop] ++(0,0)
node[right] {$q'$}
to[out = -45, in = 90] (0,0)
node[below left] {$q$}
to[out = -90, in = 135] ++ (.2, -.2);
   \end{tikzpicture} \end{minipage}
\\
$I_4$ & 
 \begin{minipage}[c]{\Breite}
 \centering
\begin{tikzpicture}
\draw[fibre] (-.1, 1.7) to ++(0.4,-1.2);
\draw[fibre] (.1, 1.7) to ++(-0.4,-1.2);
\draw[Ei] (-.1, -.3) to ++(.4,1.2);
\draw[Ei] (.1, -.3) to ++(-.4,1.2);
\draw[E1] (.5, 0) to ++(-1.25,0) to[in = 180,out = 180] ++ (0, .25) to ++ (.25,0);
   \end{tikzpicture} \end{minipage}
   &
    \begin{minipage}[c]{\Breite}
 \centering
\begin{tikzpicture}
\draw[Ei]  (-.1,1.6) to[  in = 45, out  = -45] (0,0) to ++(-.1, -.1);
\draw[Ei]  (.1,1.6) to[  in = 135, out  = -135] (0,0) to ++(.1, -.1);
\draw[E1] (.5, 0) to ++(-1.25,0) to[in = 180,out = 180] ++ (0, .25) to ++ (.25,0);
\fill[sing] (0, 1.5) node[left]{$A_2$} circle (2pt);

   \end{tikzpicture} \end{minipage}
&
 \begin{minipage}[c]{\Breite}
 \centering
\begin{tikzpicture}
\draw[E1] (-1, 0) -- ++ (1.25, 0);
\draw[Ei] (0, -.25) -- ++(0,2);
\draw[branch] 
(.2, 1.65) to[out = -45, in = 0] (0, 1)  to[out = 0, in = 90] (0,0) to[out = -90, in = 135] ++ (.2, -.2);
   \end{tikzpicture} \end{minipage}
\\
\bottomrule
 \end{tabular}
 \end{table}

\begin{rem}
 In the above examples, the branch divisor has been chosen general enough to guarantee that the surface $W$ constructed has a unique singularity and ample canonical bundle. 
 
 One could easily relax the assumptions to allow some additional ADE-singularities on $\Delta$ and then on $W$, or to allow the multiple fibre to be reducible, but the language becomes more cumbersome as we have to keep track of additional $(-2)$-curves. 
\end{rem}

\begin{thm}\label{thm: type E}
 Let $W$ be a Gorenstein surface with $K_W^2 = 1$, $p_g(W) = 2$ and $q(W)=0$ and a unique singular point of type $E_{12}, E_{13}$, or  $E_{14}$. Then $W$ arises as in Example \ref{ex: E12}, Example \ref{ex: E13}, or Example \ref{ex: E14}.
 
For each type of singularity there is  one irreducible family.
\end{thm}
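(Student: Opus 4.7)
My plan is to reverse-engineer Examples~\ref{ex: E12}--\ref{ex: E14}: starting from the abstract data of $W$, I would first pin down the elliptic structure of $X$, then build a double cover of a Hirzebruch surface from the bisection, and finally match the branch divisor. For the first step, Proposition~\ref{prop: min res p_g=2} already supplies $p_g(X) = 1$, $q(X) = 0$, and $K_X$ reduced and irreducible, and a direct computation from $\phi^*K_W = K_X + E$ using $E^2 = -1$ and $K_W^2 = 1$ gives $K_X^2 = 0$. Minimality of $X$ is then immediate: any $(-1)$-curve $B$ would satisfy $K_X.B = -1$ and, being irreducible, would have to coincide with the irreducible effective divisor $K_X$, contradicting $K_X^2 \neq -1$. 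The Enriques--Kodaira classification now excludes $\kappa(X) = 2$ (since $K^2 = 0$) and $\kappa(X) = 0$ (since $K_X.E_1 = 1$ rules out K3, and $p_g = 1, q = 0$ rule out the other cases), so $X$ is properly elliptic with a unique fibration $\pi \colon X \to \IP^1$. The canonical bundle formula, using $\deg(R^1\pi_*\ko_X)^\vee = \chi(\ko_X) = 2$, reduces to $K_X \sim \sum (m_i-1) F_i$; since $|K_X|$ is zero-dimensional and its unique element is irreducible, there is exactly one double fibre $2F_1$ with $K_X = F_1$. The constraint $K_X.E_1 = 1$ then identifies $E_1$ as a bisection, while the remaining components $E_j$ lie in fibres (as $K_X.E_j = 0$), and matching the $E_n$-configurations of Table~\ref{tab: info En} with Kodaira's list pins down a second singular fibre of exactly the types listed in Tables~\ref{tab: E13 sing fibre} and~\ref{tab: E14 sing fibre}.

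For the second step, I would project $X$ onto a Hirzebruch surface via the bisection. The sheaf $\mathcal{V} := \pi_*\ko_X(E_1)$ is locally free of rank two on $\IP^1$, since $E_1.F_t = 2$ and $h^0(F_t, \ko_{F_t}(E_1|_{F_t})) = 2$ on every smooth fibre; hence $P := \IP(\mathcal{V})$ is a Hirzebruch surface $\IF_n$, and the relative evaluation morphism induces a rational map $X \dashrightarrow P$, generically $2:1$, sending $E_1$ onto the negative section $C_\infty$. A Riemann--Roch computation for the splitting type of $\mathcal{V}$, together with the analysis of $\bar E_1^2 = 2 C_\infty^2$ on the double cover, determines $n$ in each case, matching the Hirzebruch surfaces used in the Examples. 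Resolving the indeterminacy of this rational map along the double fibre by a single blow-up produces the $(-1)$-curve $\hat G$ of Figure~\ref{fig: construction diagram}, and contracting the Kodaira configuration coming from $F_1$ (together with the extra exceptional $(-2)$-curves over $q'$ when present) to a single elliptic Gorenstein point yields the honest double cover $\bar\theta \colon \bar X \to P$.

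For the final step, the image of the double fibre is a fibre $\Gamma_p$ of $P$, so $\Delta = \Gamma_p + \Delta'$, and the $T_{2,3,n}$-singularity on $\bar X$ above $p$ (which is forced because the exceptional cycle in $\hat X$ over this point is the blow-up of the Kodaira fibre of $F_1$) compels $\Delta$ to carry a $[3,3]$-point at $p = \Gamma_p \cap C_\infty$ in the precise sense of Remark~\ref{rem: admissible33}; the Kodaira type of the second singular fibre determined above translates into the intersection $\Delta|_{C_\infty}$ and the local type of $\Delta'$ at $q'$ as listed in Tables~\ref{tab: E13 sing fibre} and~\ref{tab: E14 sing fibre}. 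This places $\Delta$ inside the irreducible family of branch divisors of Examples~\ref{ex: E12}--\ref{ex: E14}, and since $W$ is recovered from $\Delta$ by the reversible procedure of Figure~\ref{fig: construction diagram}, the surfaces $W$ form a single irreducible family for each singularity type. The main technical obstacle is this last identification: one needs a careful case-by-case local analytic comparison to verify that the $T_{2,3,n}$-singularity produced on $\bar X$ in each of the three cases actually arises from a $[3,3]$-point on $\Delta$ of exactly the shape prescribed in Remark~\ref{rem: admissible33}, and not from some other local branch configuration that might a priori give rise to an elliptic Gorenstein singularity of degree one.
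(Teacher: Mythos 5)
Your overall strategy coincides with the paper's (recover $W$ as a double cover of a Hirzebruch surface built from the bisection), and your first step is sound --- indeed your minimality argument for $X$ (a $(-1)$-curve would have to equal the reduced irreducible canonical curve, forcing $K_X^2=-1$) is cleaner than the Kodaira-dimension case analysis in Lemma \ref{classification of E_i}. The gap is in the middle step, and it is twofold. First, the relative linear system you use is the wrong one. For $\mathcal{V}=\pi_*\ko_X(E_1)$ the evaluation map fails exactly at the point $E_1\cap F$ of the reduced multiple fibre: every section of $\ko_{2F}(E_1)$ either vanishes on all of $F$ or restricts to the unique section of the degree-one bundle $\ko_F(E_1|_F)$, which vanishes at $E_1\cap F$. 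So the centre of your ``single blow-up resolving the indeterminacy'' lies \emph{on} $E_1$. The paper instead blows up the point $x$ with $\ko_F(x)\cong (K_{X/\IP^1}+E_1)|_F=\ko_F(F)\otimes\ko_F(E_1|_F)$; since $\ko_F(F)$ is the nontrivial $2$-torsion class, $x\neq E_1\cap F$ and $x\notin E_1$. This twist by $K_{X/\IP^1}$ is essential: only with $x\notin E_1$ does $\bar E_1$ pass through the elliptic singularity, so that the $[3,3]$-point of the branch divisor lies on $\bar\theta(\bar E_1)=C_\infty$ as in Examples \ref{ex: E12}--\ref{ex: E14}. Moreover $\pi_*(K_{X/\IP^1}+E_1)$ is an elementary modification of $\mathcal{V}$ at the multiple fibre, so even the ruled surface $\IP(\mathcal V)$ need not be the correct one; with your choice the identification with the Examples breaks down.

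Second, you assert without argument that contracting $\hat F$ (and the stray $(-2)$-curves) ``yields the honest double cover.'' This is the technical heart of the proof and cannot be waved through: one must show that the contraction $\bar X$ is projective and that the polarisation descends to a genuine line bundle $\bar M$ with $\bar\pi_*\bar M$ locally free of rank two and base-point free on \emph{every} fibre, including the non-reduced fibre $2\bar G$ through the elliptic point. The paper explicitly warns that naive base change already fails on $\hat X$ (one has $h^0(\hat M|_{2\hat F+2\hat G})>2$), and it needs the formal-functions/Ext computation of Lemma \ref{lem: Mbar} together with the vanishing statements of Lemmas \ref{lem: vanishingH^1}--\ref{lem: bisection on multiple fibre} to obtain a finite degree-two morphism onto $\IP(\bar\pi_*\bar M)$. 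By contrast, the point you single out as the main obstacle --- matching the $T_{2,3,n}$-point on $\bar X$ with a $[3,3]$-point of $\Delta$ --- is comparatively routine once the finite double cover exists (Remark \ref{rem: admissible33} and Proposition \ref{prop: double cover Hirzebruch}).
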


\subsection{Proof of Theorem \ref{thm: type E}}\label{sect: proof of En}
Now let $W$ be a surface with $K_W$ ample, $K_W^2 = 1$, $p_g(W) = 2$ and $q(W)=0$ and with  a unique singular point of type $E_{12}, E_{13}$, or  $E_{14}$. We study the situation of \eqref{Diagram1} via a series of Lemmata. 

\begin{lem}\label{classification of E_i} The surface $X=S$ is a minimal elliptic surface with $p_g(X) =1$ and $q(X) = 0 $, so there is a minimal elliptic fibration $\pi \colon X \to \IP^1$.
\end{lem}
\begin{proof} 
From Table \ref{tab: info En} we have $E^2 = -1$ and by adjunction $K_X.E = 1$, so $K_X^2 =0$. Note that by Proposition \ref{prop: min res p_g=2}, we have $p_g(X)=1$ and $q(X)=0$, and so $\kappa(X)\geq 0$.

Assume first that $X$ has Kodaira dimension $\kappa(X)=0$. Since $K_X^2=0$, we obtain that $X$ is a minimal model, and then $K_X$ is numerically trivial. But $K_X.E=1$, a contradiction.

Now assume $X$ is of general type. Since $K_X^2=0$ we have that $X$ is not a minimal surface and by using the inequality $0 < K_S^2\leq K_W^2=1$ from Lemma \ref{lem: basic stuff}, we have that there exists a unique $(-1)$-curve $G$ in $X$ and that $K_S^2=1$. Hence,
\[f^*K_S+G = K_X=\varphi^*K_W-E,\]
so, it follows that 
\[K_W^2=(\varphi^*K_W)^2=(\varphi^*K_W).f^*K_S.+(\varphi^*K_W).E+(\varphi^*K_W).G,\]
and then
\[K_W^2=K_S^2+K_S.(f_*E)+K_W.(\varphi_*G),\]
where each term is non-negative. Since $K_W^2=K_S=1$ then we have that $K_S.(f_*E)=K_W.(\varphi_*G)=0$. This is impossible, because $G$ is not $\phi$-exceptional and $K_W$ is ample. 
Thus, the surface $X$ has $\kappa(X)=1$.

Now, since $K_X^2=0$, the surface $X=S$ is minimal; its invariants have been computed in Proposition \ref{prop: min res p_g=2}.
The base of the fibration has genus zero, because $q(X) = 0 $. 
\end{proof}

We now study the elliptic fibration $\pi \colon X \to\IP^1$ provided by Lemma \ref{classification of E_i} and we follow closely the arguments of \cite[Sect. 3.1.1]{do-rollenske22}, where a similar problem was considered. . Our basic reference for the theory of elliptic fibrations is \cite{Friedman} and an important invariant is the line bundle $L:= \pi_*\omega_{X/\IP^1}$.
\begin{lem}\label{lem:bisection}
If the singularity $p\in W$ is of type $E_{12}, E_{13}$ or $E_{14}$, then the minimal resolution $X$ contains a curve $E = \sum E_i$ as in Table \ref{tab: info En}. In our situation, the following properties hold:
\begin{enumerate}
\item The curve $E_{1}$ is a bisection and  the fibration $\pi$ has exactly one double fiber $2F$,  where $F$ denotes the reduced multiple fiber of $\pi$. In addition,
\[L:= \pi_*\omega_{X/\IP^1} = \ko_{\IP^1}(2) \text{ and } K_X\equiv F.\]
\item $F$ is irreducible and either smooth elliptic (type $I_0$) or a nodal rational curve (type $I_1$). 
 \item In case  $E_{13}$ there is one singular  fibre of type $I_2$, $I_3$, $III$, or $IV$ such that $E_2$ is one of the curves in the fibre (compare Table \ref{tab: E13 sing fibre}).
 \item In case  $E_{14}$ there is one singular  fibre of type $I_3$ or $I_4$ containing the curves $E_2$ and $E_3$ (compare Table \ref{tab: E14 sing fibre}).
 \item All other singular fibres have at most two irreducible components, thus are of type $I_1, I_2, II, III$.
\end{enumerate}
\end{lem}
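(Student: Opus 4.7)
The plan is to extract the structure of $\pi\colon X\to \IP^1$ from the canonical bundle formula together with the irreducibility of $K_X$ provided by Proposition~\ref{prop: min res p_g=2}, and then to classify the singular fibres by exploiting the nefness of $\phi^*K_W$.

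First I would apply the canonical bundle formula to the relatively minimal elliptic fibration $\pi$,
\[ K_X \;\sim\; \pi^*(K_{\IP^1}\otimes L) + \sum_i(m_i-1)\,F_i^{\mathrm{red}}, \qquad L := \pi_*\omega_{X/\IP^1}. \]
Since $\chi(\ko_X)=2$ and $B=\IP^1$, we have $\deg L = 2$, hence $\deg(K_{\IP^1}\otimes L)=0$ and $\pi^*(K_{\IP^1}\otimes L)\sim 0$. Thus $K_X$ is linearly equivalent to a sum of divisors with pairwise disjoint support, and by Proposition~\ref{prop: min res p_g=2} it is reduced and irreducible. This forces a unique multiple fibre $2F$ with $F$ reduced and irreducible, identifies $K_X\equiv F$ and $L=\ko_{\IP^1}(2)$, and forces $F$ to be of Kodaira type $I_0$ or $I_1$, which is~(1).

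Next I would read $K_X\cdot E_1=1$ and $K_X\cdot E_j=0$ for $j\geq 2$ off the dual graphs in Table~\ref{tab: info En} via adjunction. Combined with $K_X\equiv F$ this gives $(2F)\cdot E_1=2$, so $E_1$ is a bisection, and $F\cdot E_j=0$, so the curves $E_j$ for $j\geq 2$ are vertical. As $E_j^2=-2$ while the only reduced vertical component of self-intersection zero is $F$ itself, these $E_j$ must lie in some non-multiple singular fibre $F^{(0)}$. For the fibre classification I would then use that $\phi^*K_W\equiv F+E$ is nef and strictly positive on any irreducible curve not contained in $E$; since $F\cdot C=0$ for every component $C$ of any fibre of $\pi$, this gives $E\cdot C>0$ for every such $C\not\subset E$. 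Applied to $F^{(0)}$, every component different from $E_2$ (resp.\ different from $E_2$ and $E_3$) must meet $E_2$ (resp.\ either $E_2$ or $E_3$) in the dual graph. Combining this adjacency condition with the bisection relation $E_1\cdot F^{(0)}=2$, the intersections $E_1\cdot E_j$ from Table~\ref{tab: info En}, the identity $E_2\cdot E_3=1$ in the $E_{14}$ case, and the geometric requirement that $E_1,E_2,E_3$ meet at a common point in order to produce the correct Arnold type in the $E_{14}$ case, a case-by-case walk through Kodaira's list leaves exactly $I_2,I_3,III,IV$ in case $E_{13}$ and $I_3,I_4$ in case $E_{14}$, proving (2) and (3). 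For any remaining singular fibre $F'$ the same nefness specialises to $E_1\cdot C>0$ for every component $C$; together with $E_1\cdot F'=2$ this caps the number of components at two and forces reducedness, leaving only $I_1,I_2,II,III$ and giving (4).

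The main obstacle is the Kodaira-type enumeration: no single case is hard, but one must systematically eliminate the extended Dynkin types $I_n^*,II^*,III^*,IV^*$, the longer cycles $I_n$ with $n$ large, and in the $E_{14}$ case also type $IV$, where the exclusion requires the geometric constraint that the bisection pass through the common point of the three exceptional components. Once this bookkeeping is carried out, the surviving configurations coincide with those recorded in Tables~\ref{tab: E13 sing fibre} and~\ref{tab: E14 sing fibre}.
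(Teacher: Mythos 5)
Your argument follows the paper's proof essentially step for step: the canonical bundle formula plus $\deg L=2$, irreducibility of the effective canonical divisor from Proposition \ref{prop: min res p_g=2} to force a single double fibre with $K_X\equiv F$, adjunction to make $E_1$ a bisection and the remaining $E_j$ vertical, and nefness of $\phi^*K_W=F+E$ to bound the fibre configurations case by case. The only point worth flagging is that passing from ``$F$ reduced and irreducible'' to ``type $I_0$ or $I_1$'' also uses Kodaira's fact that multiple fibres are of type ${}_mI_b$ (which the paper cites from Friedman), since otherwise a cuspidal fibre of type $II$ would not be excluded.
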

\begin{rem}
 Noether's formula gives that $c_2(X) = 23$, so there is plenty of room for the required singular fibres. 
\end{rem}

 \begin{proof} 
 Let us denote by  $F_i$ the reduced multiple fibres of $\pi$ with multiplicities $m_i$. Recall that they are of type $I_{k_i}$, so the non-reduced Kodaira fibres cannot be multiple \cite[Lemma 5, Section 7]{Friedman}.
By the canonical bundle formula \cite[Theorem 15, Section 7]{Friedman} 
we have
\begin{equation}\label{fiber canonical formula}
K_{X} = \pi^*\left(K_{\mathbb{P}^1}+L\right) + \sum_{i} (m_i-1) F_i.
\end{equation}

By \cite[Lemma 14, Section 7]{Friedman} we have $1=p_g(X)=\deg L+g(\mathbb{P}^1)-1$, and then $\deg L=2$. Thus, we have $L = \pi_*\omega_{X/\IP^1} = \ko_{\IP^1}(2)$, and so  \eqref{fiber canonical formula} gives
\begin{equation}\label{d=2 fiber canonical formula}
     K_X\equiv \sum_{i=1}^k (m_i-1) F_i.
\end{equation}
 
 Now, we know that $E_1$ cannot be contained in a fibre because it is reduced, irreducible, and $E_1^2\neq -2$. Thus, $E_1$ is a $d$-multisection with $d\geq 1$. 
The self-intersection numbers and the adjunction formula give
\[
 1 =K_X. E= K_X. E_1 =  \sum_{i=1}^k (m_i-1) F_i. E_1 \geq \sum_{i = 1}^k (m_i-1)\geq k
\]
and we see that $k = 1 = m_1-1$, that is, there is a unique multiple fibre of multiplicity two and $E_1$ is a bisection.

On the other hand, by Proposition \ref{prop: min res p_g=2}, we have that $p_g(X)=1$ and the only effective canonical divisor is of the form
\[ K_X = F +\sum a_i E_i\]
for an irreducible and reduced curve $F$ not contained in $E$. To show $(i)$, we prove that $a_i = 0 $, that is, the multiple fibre is irreducible, thus of type $I_0$ or $I_1$.

Clearly $a_1 = 0$, because $E_1$ is not contained in a fibre, which concludes the case $E_{12}$.

The curves $E_2, E_3$, if present, are $(-2)$-curves and have to be contained in the a fibre of the elliptic fibration; in case $E_{14}$ in fact both are  in the same fibre,  because they intersect. They cannot be contained in the unique double fibre, because $E_1.(E-E_1) = 2$ in cases $E_{13}$ and $E_{14}$, and $E_1$ is a bisection. Thus $a_i = 0$ for $i>1$ as well and we have proved $(i)$. 

Finally, since $f^*K_W = F+E$ is nef, and positive on all curves except the curves $E_i$, we see that every fibre not containing components of $E$ can have at most two components, which both intersect the bisection $E_1$, which implies $(iv)$.

A simple case by case examination shows that the necessary configurations for $E_{13}$ and $E_{14}$ can only fit into the kind of fibres already listed in Table \ref{tab: E13 sing fibre} and Table \ref{tab: E14 sing fibre}, proving $(ii)$ and $(iii)$. 
\end{proof}

\begin{lem}
There exists a point $x\in F$ such that $x\not\in E_1$ and
$(K_{X/\mathbb{P}^1} +E_1)|_F$ is linearly equivalent to $x$.
\end{lem}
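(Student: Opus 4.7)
The plan is to identify $(K_{X/\mathbb{P}^1} + E_1)|_F$ as an effective degree-one divisor on $F$ by a Riemann-Roch computation, and then rule out the one bad location (the intersection $E_1 \cap F$) by showing that $K_X|_F$ is non-trivial in $\operatorname{Pic}(F)$.

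First I would set up the intersection-theoretic and line-bundle data on $F$. Since $F$ is a fibre, $\pi^* K_{\mathbb{P}^1}$ restricts trivially and so $K_{X/\mathbb{P}^1}|_F = K_X|_F$ as line bundles. Because $K_X \equiv F$ and $F^2 = 0$, we have $\deg(K_X|_F) = 0$, while $E_1$ being a bisection with $E_1 \cdot 2F = 2$ yields $\deg(E_1|_F) = 1$. Hence $M := (K_{X/\mathbb{P}^1} + E_1)|_F$ has degree one, and $E_1 \cap F$ consists of a single reduced point, call it $p_0$, so $E_1|_F \sim p_0$.

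Next I would show $M$ is linearly equivalent to a unique point. The reduced multiple fibre $F$ has arithmetic genus $1$ with $\omega_F \cong \mathcal{O}_F$ (true in both the $I_0$ and $I_1$ cases). Riemann–Roch on $F$ then gives $\chi(M)=1$, while $h^1(M) = h^0(M^{-1}) = 0$ since $\deg M^{-1} = -1$. Thus $h^0(M) = 1$ and $M$ is linearly equivalent to a unique effective divisor of degree one, i.e.\ a point $x \in F$.

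The final (and key) step is to rule out $x = p_0$. Here I would show that $K_X|_F$ is a \emph{non-trivial} $2$-torsion element of $\operatorname{Pic}(F)$. Two-torsion is immediate from $\mathcal{O}_X(2F) \cong \pi^*\mathcal{O}_{\mathbb{P}^1}(1)$, which restricts to $\mathcal{O}_F$, combined with adjunction $\omega_F = (K_X + F)|_F$ and $\omega_F \cong \mathcal{O}_F$. Non-triviality I would get from the long exact sequence associated to $0 \to \mathcal{O}_X \to \mathcal{O}_X(F) \to \mathcal{O}_F(F) \to 0$: since $p_g(X) = 1$ and the unique effective divisor in $|F|$ is $F$ itself, the restriction $H^0(\mathcal{O}_X(F)) \to H^0(\mathcal{O}_F(F))$ is zero, whence $H^0(\mathcal{O}_F(F)) \hookrightarrow H^1(\mathcal{O}_X) = 0$. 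Hence $\mathcal{O}_F(F)$ has no global sections, so $F|_F \not\cong \mathcal{O}_F$, and by adjunction the same holds for $K_X|_F$. If we had $x = p_0$, then $M \sim E_1|_F$ would give $K_{X/\mathbb{P}^1}|_F = K_X|_F \sim 0$, contradicting non-triviality. Therefore $x \neq p_0$, i.e.\ $x \notin E_1$.

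The main obstacle is the non-triviality of $K_X|_F$; the rest is routine Riemann–Roch and adjunction. Once the cohomological argument with the ideal sequence of $F$ is in place (using $q(X)=0$ and $p_g(X)=1$), the conclusion falls out directly.
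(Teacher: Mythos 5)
Your proof is correct and follows essentially the same route as the paper: compute that $(K_{X/\mathbb{P}^1}+E_1)|_F$ has degree one, conclude it is linearly equivalent to a unique point $x$, and exclude $x\in E_1$ from the non-triviality of $\mathcal{O}_F(F)$. The only difference is that you prove this non-triviality directly from the ideal-sheaf sequence of $F$ together with $q(X)=0$ and $h^0(\mathcal{O}_X(F))=1$, whereas the paper invokes it as the standard fact about multiple fibres established in the preceding lemma; your self-contained argument is a welcome, correct elaboration.
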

\begin{proof}
Note that $\ko_F(F)$ is a non-trivial $2$-torsion bundle on $F$ by Lemma \ref{lem:bisection}. We have
  \[K_{X/\IP^1} = K_{X} - \pi^*K_{\IP^1} = \pi^*L + F,\] thus
  \[\left(K_{X/\IP^1} +E_1\right)|_F = \left(\pi^*L +F + E_1\right)|_F = \left(E_1+F\right)|_F\]
  which has degree $1$ and thus is linearly equivalent to a unique effective divisor $x$. We have $x\not \in E_1$ because $F|_F$ is non-trivial.
\end{proof}
Now we consider $\sigma \colon \hat X = \mathrm{Bl}_{x}(X)\to X $, the blow up in the point $x$,  and denote the exceptional curve over $x$ with $\hat{G}$. Let $\hat \pi = \pi \circ \sigma\colon \hat X \to \mathbb{P}^1$ be the induced fibration and let $\hat E_i$ respectively $\hat F$ be the strict transforms of $E_i$ and  $F$ in $\hat X$. 

\begin{prop}\label{prop: Xbar}
We consider the line bundle 
  \begin{equation}\label{eq: M} \hat{M}  = K_{\hat X/{\IP^1}} + \hat E_1 -2\hat{G}= K_X + 4\hat F + 2\hat G + \hat E_1 = \hat \pi ^* L +\hat E_1 +\hat F.
  \end{equation}
on $\hat X$.
Then the following holds:
\begin{enumerate}
\item $\hat{M}|_{\hat F} \isom  \ko_{\hat F}$ and $\ko_{\hat F}(\hat F) \isom \ko_{\hat F}(- \hat E_1)$
 \item $\hat M$ is big and nef and semi-ample. 
   If $C$ is an irreducible curve in $\hat X$ then $\hat{M}.C = 0 $ if and only if
    $C = \hat F$ or $C$ is a component of a singular non-multiple fibre that does not intersect $\hat E_1$ (compare Tables \ref{tab: E13 sing fibre} and \ref{tab: E14 sing fibre}).
\item 
There is a normal projective surface $\bar X$ together with an ample line bundle $\bar M$ fitting in the diagram 
 \[\begin{tikzcd} \hat X \arrow{rr}{\bar \sigma} \arrow{dr}[swap]{\hat \pi}&& \bar X\arrow{dl}{\bar\pi}\\ & \IP^1\end{tikzcd}\]
 such that
\begin{enumerate}
 \item $\bar \sigma$ is birational and contracts exactly the curves on which $\hat M$ is numerically trivially listed in $(ii)$.
 
Therefore $\bar X$ has  one  elliptic singularity of type $T_{2,3 n}$ with $n= 6,7$ at the point $\bar p = \bar \sigma(\hat F)$ and possibly  one additional $A_1$ or $A_2$ singularity in case $E_{13}$ or $E_{14}$.

 \item We have $\hat M = \bar \sigma^*\bar M $ and $K_{\bar X}\isom \bar \pi ^* \ko_{\IP^1}(1)$.
\end{enumerate}
\end{enumerate}
\end{prop}
\begin{proof}
We start with $(i)$. 
From  $\hat{M}=K_{\hat{X}/\IP^1} + \hat{E_1} - 2\hat{G}$ we get 
    \[
      (K_{\hat{X}/\IP^1} + \hat{E_1} -2\hat{G}) = \sigma^{*}\left(K_{\bar{X}/\IP^1} + E_1\right) -  \hat{G}.
    \]
    The strict transform of $F$ is $\hat{F} \isom F$. Then
    $(K_{X/\IP^1} +E_1)|_{F} = x$,
    \begin{align*}
      \hat{M}|_{\hat{F}}& =\left(\sigma^{*}(K_{\bar{X}/\IP^1}+E_1) - \hat{G}\right)|_{\hat{F}}\\
      &= x - x = 0.
    \end{align*}
    Thus $\hat{M}|_{\hat F} = \ko_{\hat F}$ and $\ko_{\hat F}(\hat F) \isom \ko_{\hat F}(- \hat E_1)$.

Now we turn to $(ii)$.
The second expression for $\hat{M}$ in \eqref{eq: M} shows that $\hat{M}$ is effective and we only have to compute that
 \[ \hat{M}.\hat F = 0 , \; \hat{M}.\hat E_1 = 4, \hat{M}. (\text{gen. fibre}) = 2 , \text{ and } \hat{M}^2 =  8.\]
  If $\hat{M}.C=0$ then $C$ cannot be horizontal  or an irreducible fibre, so is contained in a reducible singular fibre. Since $\hat{M}.\hat G = 1 $, the curve $C$ is one of the listed curves.

To prove semi-ampleness we look at
\[ 2\hat M - \hat F= K_{\hat X} + 2\left(\hat E_1 + \hat F +\hat\pi^*\ko_{\IP^1}(1)\right).
\]
Since $2\left(\hat E_1 + \hat F +\hat\pi^*\ko_{\IP^1}(1)\right)$ has even  positive  degree on every irreducible curve  and has self-intersection $8$,  Reider's theorem  \cite[IV.11.4]{BHPV} applies to  shows that  $2\hat M - \hat F$ has no base-points, so the base-locus of $2 \hat M$ is contained in $\hat F$. 

The in the same way we check that $\hat M -\hat F - K_{\hat X} = 3\hat F + 2\hat G + \hat E_1$ is  big and nef, so that by  Kodaira vanishing the sequence
\[ 0 \to H^0(\hat X, \hat M - \hat F) \to H^0(\hat X, \hat M ) \to H^0(\hat X, \hat M|_{ \hat F})  \to 0 \]
is exact. 
Since by $(i)$ the restriction of $\hat M$ to $\hat F$ is trivial, we can find a section of $\hat M$, and hence also of $2\hat M$ not vanishing anywhere on $\hat F$, so there are also no base points of $2\hat M$ on $\hat F$.

For $(iii)$ we take a suitable multiple of $\hat M$ and get the map $\bar \sigma \colon \hat X \to \bar X$, that contracts exactly the curves on which $\hat M$ is not ample, which are listed in $(ii)$.

The surface $\bar X$ still fibres over $\IP^1$, because we only contract curves in the fibres.
The simple elliptic point $\bar p$ is a strictly log-canonical singularity, so we compute 
\[ \bar \sigma^*K_{\bar X} = K_{\hat X} + \hat F = ( \sigma^* K_X + \hat G) + \hat F = (\sigma^*F +\hat G) + \hat G = 2(\hat F + \hat G),\]
which is exactly the class of a fibre of $\hat \pi$. Thus we get the formula for the canonical bundle. 

 We can define $\bar M:=\bar\sigma_*\hat M$, which is a line bundle because we have seen in the proof of $(ii)$ hat $\hat M$ is trivial in an open neighbourhood of $\hat F$ and the original description \eqref{eq: M} shows that it is trivial also in the neighbourhood of any other curve that is contracted.
\end{proof}

\begin{prop}\label{prop: double cover Hirzebruch}
 Let $(\bar X, \bar M)$ be as in Proposition \ref{prop: Xbar}. Then 
 \begin{enumerate}
  \item $\bar\pi_*\bar M $ is a locally free sheaf of rank two,

  \item the map $\bar \pi^*\bar \pi_* \bar M \to \bar M$ is surjective,
  \item the induced finite morphism $\bar \theta \colon \bar X \to P = \IP(\bar \pi _*\bar M)$ is a double cover. If $\Gamma_p$ is the ruling through $p=\bar\theta (\bar{p})$ then the branch divisor is of the form $\Delta = \Gamma_p + \Delta' \in |2(\Gamma- K_P)|$.
  \item $\Delta$ has a non-degenerate $[3,3]$-point at $p$, in such a way that 
  $\Gamma_p.\Delta' =4$ and $\Delta'$ as an $A_3$ or $A_4$ singularity at $p$. 
  \item $\bar E_1 = \bar \sigma (\hat E_1)$ is not contained in the branch locus of $\bar \theta$ and maps to a section of $P \to \IP^1$. 
 \end{enumerate}
\end{prop}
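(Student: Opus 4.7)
The plan is to analyse $\bar\pi_*\bar M$ by pulling back to the smooth resolution $\hat X$, produce the double cover from the associated linear system, and then read off the local structure of the branch locus from what Lemma \ref{lem:bisection} already tells us about the singularity $\bar p$.

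For $(i)$ and $(ii)$, since $\bar\sigma_*\hat M=\bar M$ one has $\bar\pi_*\bar M=\hat\pi_*\hat M$, so it suffices to compute on the smooth surface $\hat X$. Intersecting with a general fibre gives $\hat M\cdot F_{\mathrm{gen}}=2$, so $\hat M$ restricts to a degree $2$ line bundle on every smooth elliptic fibre with $h^0=2$ and $h^1=0$; a direct case check on the remaining fibres and, for the fibre $2\hat F+2\hat G$ through the elliptic point, the thickening identity $\hat M|_{n\hat F}=\ko_{n\hat F}$ already established in Lemma \ref{lem: Mbar}, shows $h^1(\hat M|_{\hat X_t})=0$ throughout. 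Cohomology and base change then gives that $\bar\pi_*\bar M$ is locally free of rank $2$ and commutes with base change; base-point freeness on each fibre, which on a smooth elliptic fibre is the $g^1_2$ cut out by $\hat E_1|_{F_t}$ thanks to the formula \eqref{eq: M}, upgrades this to the surjection in $(ii)$.

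For $(iii)$, the surjection of $(ii)$ produces a morphism $\bar\theta\colon\bar X\to P=\IP(\bar\pi_*\bar M)$ over $\IP^1$ whose restriction to every ruling has degree $2$. Ampleness of $\bar M$ (Lemma \ref{lem: Mbar}) rules out the contraction of any curve, so $\bar\theta$ is finite of degree $2$, i.e.\ a double cover of a Hirzebruch surface. Combining $K_{\bar X}=\bar\pi^*\ko_{\IP^1}(1)=\bar\theta^*\Gamma$ from Lemma \ref{lem: Mbar} with the double-cover formula $K_{\bar X}=\bar\theta^*(K_P+\tfrac{1}{2}\Delta)$ gives $\Delta\in|2(\Gamma-K_P)|$. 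Since $\bar\theta^*\Gamma_p=2\bar G$ as divisors (the double fibre maps with multiplicity $2$ onto its image ruling), the whole $\Gamma_p$ must sit inside $\Delta$, and we write $\Delta=\Gamma_p+\Delta'$.

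For $(iv)$ the local picture of the double cover at $p$ is forced by two pieces of information: the germ of $\Delta$ at $p$, and the requirement that the double cover realises the given singularity at $\bar p$, which by Lemma \ref{lem:bisection} is of type $T_{2,3,n}$ with $n\in\{6,7\}$. Remark \ref{rem: admissible33} read in reverse classifies which branch germs produce such an elliptic double point: they have a non-degenerate $[3,3]$-point decomposing locally as a smooth component plus a curve with $A_{n-3}$-singularity, meeting with local intersection $4$. The smooth component is necessarily $\Gamma_p$, which yields exactly the claim. For $(v)$, on a general fibre $\hat M|_{F_t}=\ko_{F_t}(\hat E_1|_{F_t})$, so the two intersection points of $\bar E_1$ with $F_t$ form a fibre of the $g^1_2$ defining $\bar\theta|_{F_t}$ and are therefore identified by $\bar\theta$; it follows that $\bar\theta(\bar E_1)$ meets every ruling once and is a section, and the two-to-one restriction $\bar E_1\to\bar\theta(\bar E_1)$ keeps $\bar E_1$ off the ramification. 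The main obstacle in all of this is really the $h^1$-vanishing on the non-reduced fibre through $\bar p$, which is precisely where the formal-functions argument of Lemma \ref{lem: Mbar} does the essential work.
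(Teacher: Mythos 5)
Your argument for items $(iii)$--$(v)$ matches the paper's in substance (your observation that $\bar E_1\cap F_t$ is itself a member of the $g^1_2$ cut out by $\bar M|_{F_t}$ is a clean way to see both the section property and the non-containment in the branch locus). However, there is a genuine gap in your treatment of $(i)$ and $(ii)$: the claim that $h^1(\hat M|_{\hat X_t})=0$ for \emph{every} fibre of $\hat\pi$ is false at the fibre $2\hat F+2\hat G$ through the elliptic configuration. There one has $\hat M\cdot(2\hat F+2\hat G)=2$ and $\chi(\ko_{2\hat F+2\hat G})=0$, so $\chi(\hat M|_{2\hat F+2\hat G})=2$; but $h^0(\hat M|_{2\hat F+2\hat G})>2$ (the paper records exactly this in the remark following the proposition), whence $h^1>0$ and cohomology-and-base-change fails for $\hat\pi$ at that point. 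Indeed the thickening identity $\hat M|_{n\hat F}\isom\ko_{n\hat F}$ that you invoke points the other way: already $h^1(\ko_{\hat F})=1$ since $\hat F$ has arithmetic genus one. So while $(i)$ survives ($\hat\pi_*\hat M=\bar\pi_*\bar M$ is torsion-free on $\IP^1$, hence locally free, of rank $2$ by the generic fibre), your route does not establish the fibrewise generation in $(ii)$ over the point $\hat\pi(\hat F)$, which is precisely the delicate case.

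The paper circumvents this by applying base change to $\bar\pi$ on the \emph{singular} surface $\bar X$, where the scheme-theoretic fibre through $\bar p$ is the non-reduced curve $2\bar G$ with smooth rational reduction. Using the explicit graded ring $R(\hat F,\ko_{\hat F}(-\hat F))\isom\IC[a,b,c]/(c^2+f_6)$ of the elliptic point, one identifies $2\bar G$ locally as $\operatorname{Spec}\IC[a,c]/(c^2)$ and then the decomposition sequence
\[
 0 \to \ko_{\bar G}(\bar M-\bar G) \to \ko_{2\bar G}(\bar M) \to \ko_{\bar G}(\bar M) \to 0
\]
gives $h^0(\bar M|_{2\bar G})=2$ with no base points (Lemma \ref{lem: bisection on multiple fibre}); the non-multiple fibres are handled by the vanishing criterion of Lemma \ref{lem: vanishingH^1}. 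To repair your argument you would need to replace the computation on the fibres of $\hat\pi$ by this computation on the fibres of $\bar\pi$ --- the two are genuinely different, and conflating them is where the proposal breaks down.
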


We start with some  Lemmata, controlling the restriction of $\bar M$ to the fibres of $\bar \pi$. 
The first result is taken from  \cite[Lem. 2.4]{CFPR23} but we include their proof for the convenience of the reader. 
\begin{lem}\label{vanishingH^1}
 Let $C$ be a  reduced Gorenstein curve and $\kf$ a rank one  torsion-free sheaf on $C$. 
 For a subcurve $B$ consider the sheaf $\kf|_{B}^{[1]}: = \left(\kf\tensor \ko_B\right)/(\text{torsion})$ and define its degree by the Riemann--Roch formula $\deg\left(\kf|_{B}^{[1]}\right) : = \chi(\kf|_{B}^{[1]}) - \chi(\ko_B)$.
 
 If $\deg\left(\kf|_{B}^{[1]}\right) \geq 2p_a(B)-1$  for every  subcurve $B\subseteq C$, then $H^1(C,\kf)=0$. 
\end{lem}

\begin{proof}
The proof follows by the arguments used in \cite[Thm.1.1]{CFHR}. 
By Serre duality $H^1(C,\kf)^{\vee} \cong \Hom (\kf, \omega_C)$. Assume it is not zero and 
pick any nonzero map
$\varphi \colon \kf \to \omega_C$. By \cite[Lemma 2.4]{CFHR}, $\varphi $ comes from a generically surjective 
map 
$\kf|_{B}\to\omega_B$ for a subcurve $B\subseteq C$  and   yields  an injection $\kf|_{B}^{[1]} \into \omega_B$  whose 
cokernel has
finite length. Therefore   $\chi\left(\kf|_{B}^{[1]}\right)\leq\chi(\omega_B)$, thus
$\deg\left(\kf|_{B}^{[1]}\right)   \leq 2p_a(B)-2$, against the assumptions.  
\end{proof} 

As an application we get the following.
\begin{lem}\label{lem: bisection on non-multiple}
 Let $C = \sum_i m_i C_i$ be a non-multiple fibre of a relatively minimal elliptic fibration as classified by Kodaira and let $M$ be an effective Cartier divisor of degree two on $C$. Then $h^0(C, M) = 2$ and $M$ is base-point-free. 
\end{lem}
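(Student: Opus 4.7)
The plan is to combine Riemann--Roch for Gorenstein curves with the cohomology vanishing criterion of Lemma~\ref{lem: vanishingH^1}, applied first to $M$ itself and then to the twist $M(-p)$ for a hypothetical base point $p$.

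As a non-multiple fibre of a relatively minimal elliptic fibration, $C$ is a connected Gorenstein curve of arithmetic genus one with $\omega_C \cong \ko_C$ (by adjunction, since $K_X \cdot C = C^2 = 0$). Hence $\chi(\ko_C)=0$ and Riemann--Roch gives $\chi(M) = \deg M = 2$, so the first claim reduces to showing $H^1(C,M)=0$. The crucial geometric input that I will use throughout is that for every proper subcurve $B \subsetneq C$ one has $p_a(B) \leq 0$: indeed, removing any vertex from the Kodaira dual graph of $C$ produces a disjoint union of trees of rational components, and such configurations have non-positive arithmetic genus.

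For the first assertion I invoke Lemma~\ref{lem: vanishingH^1}. Since $M$ is an effective Cartier divisor, $\deg M|_B = \operatorname{length}(M \cap B) \geq 0$ for every subcurve $B \subseteq C$. For $B \subsetneq C$ one has $2 p_a(B) - 1 \leq -1 \leq 0 \leq \deg M|_B$, and for $B = C$ one has $2 p_a(C) - 1 = 1 \leq 2 = \deg M$. The hypotheses of Lemma~\ref{lem: vanishingH^1} are satisfied, so $H^1(C, M) = 0$ and $h^0(C, M) = 2$.

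For base-point freeness I argue by contradiction. If $p \in C$ is a base point of $|M|$ then every section of $M$ factors through the subsheaf $\mathfrak{m}_p M \subseteq M$, so $h^0(\mathfrak{m}_p M) = h^0(M) = 2$; in particular the distinguished section $s_0$ with $(s_0)_0 = M$ vanishes at $p$, forcing $p \in \operatorname{Supp}(M)$. When $p$ is a smooth point of $C$, $\mathfrak{m}_p M \cong M(-p)$ is again a line bundle of degree one, and the inequalities required for Lemma~\ref{lem: vanishingH^1} continue to hold: on a proper subcurve $B$, the $-1$ correction coming from $-p$ when $p \in B$ is absorbed by $\deg M|_B \geq 1$ (since $p \in \operatorname{Supp}(M) \cap B$), while on $B = C$ we have $\deg M(-p) = 1 = 2 p_a(C) - 1$. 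Lemma~\ref{lem: vanishingH^1} then gives $H^1(M(-p)) = 0$, so $h^0(M(-p)) = \chi(M(-p)) = 1$, contradicting $h^0(\mathfrak{m}_p M) = 2$. The main obstacle is the case of a base point at a \emph{singular} point of $C$: there $\mathfrak{m}_p$ is not invertible and $\mathfrak{m}_p M$ is only rank-one torsion-free. This case can be reduced to the smooth one by pulling back the pencil $|M|$ along the partial normalisation at $p$, where the preimage of $p$ consists of smooth points and the above argument applies to the resulting line bundle; alternatively, one may appeal to a version of Lemma~\ref{lem: vanishingH^1} for torsion-free sheaves on Gorenstein curves in the spirit of \cite{CFHR}.
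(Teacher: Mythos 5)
Your argument is correct and follows essentially the same route as the paper: reduce both claims to the vanishing of $H^1$ via Lemma~\ref{lem: vanishingH^1} applied to $M$ and to its twist at a candidate base point, using that every proper subcurve of a non-multiple Kodaira fibre has non-positive arithmetic genus. You are in fact slightly more careful than the paper at the one delicate spot, namely that $\gothm_pM$ fails to be invertible at a singular point of $C$, where the paper silently relies on the torsion-free version of the vanishing statement from \cite{CFHR} that you mention as your fallback.
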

\begin{proof}
Recall that while $p_a(C) = 1$, we have $p_a(B)=0$ for every proper subcurve $B<C$. 
It is now straightforward to check that for any point $p\in C$ and for any subcurve $B<C$ both $M$ and $\gothm_p(M)$ satisfy the assumptions of Lemma \ref{vanishingH^1} and thus have vanishing first cohomology. Thus $h^0(C, M) = \chi(M) =2$ and the associated linear system has no base-points. 
\end{proof}
More specific to our situation we have the following.

\begin{lem}\label{lem: bisection on multiple fibre}
 The fibre of $\bar \pi$ through the elliptic point $\bar{p}\in \bar X$ is a non-reduced Gorenstein curve with trivial canonical bundle, which we denote by $2\bar G$. The reduction $\bar G$ is a smooth curve of genus zero and 
 \[ H^0(\bar M|_{2\bar G})\isom H^0(\bar M|_{\bar G})\]
 has two sections that define a base-point free pencil on $2\bar G$. 
 \end{lem}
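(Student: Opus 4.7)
Plan. The structural claims follow by transferring information from $\hat X$ through the contraction $\bar\sigma$, and the cohomological statement reduces to a Riemann--Roch computation on the Gorenstein curve $2\bar G$.

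First, the scheme-theoretic fibre $\bar\pi^{-1}(t)\subset \bar X$ through $\bar p$ is a Cartier divisor, whose pullback under $\bar\sigma$ is the fibre $2\hat F+2\hat G$ of $\hat\pi$ (coming from the multiple fibre $2F$ of $\pi$ after blowing up $x\in F$). Since $\bar\sigma$ contracts only $\hat F$, its restriction to $\hat G$ is a bijective birational morphism onto $\bar G:=\bar\sigma(\hat G)$; the fact that $\hat F$ and $\hat G$ meet transversally at a single point forces this restriction to be an isomorphism, so $\bar G\isom\hat G\isom\IP^1$. As a Cartier divisor in the Gorenstein surface $\bar X$, the fibre $2\bar G$ is itself Gorenstein; adjunction combined with $K_{\bar X}=\bar\pi^*\ko_{\IP^1}(1)$ from Lemma~\ref{lem: Mbar} and $\ko_{\bar X}(2\bar G)=\bar\pi^*\ko_{\IP^1}(1)$ yields $\omega_{2\bar G}=\ko_{2\bar G}$, so $p_a(2\bar G)=1$. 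Using $\bar\sigma^*\bar M=\hat M$ together with the explicit formula $\hat M=K_{\hat X/\IP^1}+\hat E_1-2\hat G$, and the intersection numbers $\hat E_1.\hat G=0$ (since $x\notin E_1$) and $K_{\hat X}.\hat G=-1$, one computes $\hat M.\hat G=1$, so $\bar M|_{\bar G}\isom\ko_{\IP^1}(1)$.

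For the cohomology claim, consider the restriction sequence
\[0\to\kf\to\bar M|_{2\bar G}\to\bar M|_{\bar G}\to 0\]
with $\kf$ the kernel. Riemann--Roch on the Gorenstein curve $2\bar G$ and the projection formula give $\chi(\bar M|_{2\bar G})=\deg\bar M|_{2\bar G}=\bar M.(2\bar G)=\hat M.(2\hat F+2\hat G)=2$, matching $\chi(\bar M|_{\bar G})=2$, and hence $\chi(\kf)=0$. Since $\bar M|_{2\bar G}$ is a line bundle on the Cohen--Macaulay curve $2\bar G$, it is torsion-free, and so is $\kf$; a torsion-free rank-one coherent sheaf on the smooth rational curve $\bar G$ is automatically a line bundle, and its Euler characteristic forces it to be $\ko_{\IP^1}(-1)$. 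Therefore $H^i(\kf)=0$ for $i=0,1$, and the long exact sequence yields the claimed isomorphism $H^0(\bar M|_{2\bar G})\isom H^0(\bar M|_{\bar G})\isom\IC^2$. Base-point freeness passes from $\bar G$ to $2\bar G$ because the two share the same underlying set: a section of $\bar M|_{2\bar G}$ vanishes at a point $q$ precisely when its image in $\bar M|_{\bar G}$ does, and $|\ko_{\IP^1}(1)|$ is base-point free.

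The main subtlety is that $\bar G$ need not be Cartier on $\bar X$ at the elliptic point $\bar p$, which would obstruct any naive local identification of $\kf$ as $\bar M|_{\bar G}(-\bar G|_{\bar G})$. The argument avoids any delicate local analysis at $\bar p$ by exploiting only the torsion-freeness of $\bar M|_{2\bar G}$ (inherited from Cohen--Macaulayness of $2\bar G$) together with the smoothness of $\bar G$, so that rank and Euler characteristic pin down $\kf$ globally.
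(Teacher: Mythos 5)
The cohomological half of your argument is essentially sound and runs parallel to the paper's: the paper also uses the decomposition sequence $0\to\ko_{\bar G}(\bar M-\bar G)\to\ko_{2\bar G}(\bar M)\to\ko_{\bar G}(\bar M)\to 0$ together with $\chi(\ko_{2\bar G})=0$, and your identification of the kernel via torsion-freeness and rank on $\IP^1$ is a harmless variant. The problem is the very first structural step. You claim that because $\hat F$ and $\hat G$ meet transversally in a single point, $\bar\sigma|_{\hat G}\colon\hat G\to\bar G$ is an isomorphism and hence $\bar G$ is smooth. This is false for contractions onto an elliptic singularity, and the paper itself contains the counterexample: $\hat E_1$ also meets $\hat F$ transversally in a single point ($\hat E_1.\hat F=1$), yet its image $\bar E_1$ acquires a cusp at $\bar p$ --- this is exactly what is used in the proof of Lemma \ref{lem: proj bundle} to get $p_a(\bar E_1)=p_a(\hat E_1)+1$. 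So transversality buys you nothing here; whether the image of a curve meeting $\hat F$ once is smooth at $\bar p$ depends on \emph{which} point of $\hat F$ it passes through.

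This is precisely where the ``delicate local analysis at $\bar p$'' that you explicitly set out to avoid becomes unavoidable. The paper describes the germ at $\bar p$ as the spectrum of the graded ring $R(\hat F,\ko_{\hat F}(-\hat F))\isom\IC[a,b,c]/(c^2+f_6)$ and shows that the fibre through $\bar p$ is cut out by the degree-two generator $b$, which can be chosen to vanish doubly at $q=\hat G\cap\hat F$ precisely because the difference between $q$ and the point $\hat E_1\cap\hat F$ is the $2$-torsion class $\sigma^*\ko_F(F)$. Only this computation yields the local model $\IC[a,c]/(c^2)$, i.e.\ both the smoothness of the reduction $\bar G$ and the ribbon structure of the fibre. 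Without it you do not know $\bar G\isom\IP^1$, hence not that $\bar M|_{\bar G}\isom\ko_{\IP^1}(1)$, nor that your kernel $\kf$ is a degree $-1$ line bundle on a smooth rational curve; if $\bar G$ had a cusp at $\bar p$, the degree-one bundle $\bar M|_{\bar G}$ would have $h^0\le 1$ and the conclusion (two sections, base-point free) would fail, taking the later double-cover construction down with it. So this is a genuine gap, not a stylistic shortcut.
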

\begin{proof}
The scheme-theoretic fibre is by definition a Cartier divisor in $\bar X$ with trivial normal bundle and the adjunction formula gives $\omega_{2\bar G} = \ko_{2\bar G}$. As the bijective map $\bar\sigma|_{\hat G}\colon \hat G \to \bar G$ is an isomorphism except possibly at the point $\bar p\in \bar G$, it is enough to study the local situation close to the contracted curve $\hat F$.

By \cite[4.25]{reid97}, the analytic germ at $\bar p$ is locally isomorphic to the spectrum of 
\begin{equation}\label{eq: ring at ellipic point}
 R(\hat F, \ko_{\hat F}(-\hat F))\isom \IC[a,b,c]/(c^2 + f_6),
\end{equation}
where $\deg(a,b,c) = (1,2,3)$ and $f_6$ is weighted homogeneous of degree $6$ not divisible by $a$. The inclusion of  $\IC[a^2, b]$ into this ring defines a double cover $\psi\colon \hat F \to \IP^1$ branched over the four zeros of $a^2f_6$. 

Considered as sections on $\hat F$, we have $a\in H^0(\ko_{\hat F}(-\hat F))$, thus vanishing at $\hat E \cap \hat F$. 
For $b$ we can choose any section $\ko_{\hat F}(-2\hat F) \isom \ko_{\hat X} (2\hat G)|_{\hat F}$ linear independent of $a^2$, so we can choose $b$ to be the section defining $2\hat G$ near $\hat F$. Thus $b$ vanishes twice at $\hat x = \hat G \cap \hat F$. Therefore, $b$ vanishes at a ramification point of $\psi$, that is, $f_6$ is divisible by $b$ with this choice.

Then $2\bar G$ is defined by $\IC[a,b,c]/(c^2 + f_6, b)\isom \IC[a,c]/(c^2)$, a non-reduced curve with smooth reduction as claimed.

Now we consider the decomposition sequence of the curve (compare \cite[II.1]{BHPV} or \cite{CFHR}) twisted by $\bar M$,
\[
 0 \to \ko_{\bar G}(\bar M-\bar G) \to \ko_{2\bar G}(\bar M) \to \ko_{\bar G}(\bar M) \to 0
\]
Since $\bar G \isom \IP^1$ and $\chi (\ko_{2\bar G}) = \chi(\ko_{\bar G}) + \chi(\ko_{\bar G}(-\bar G)) = 0$ we have $\deg \ko_{\bar G}(-\bar G) = -2$. For  $\bar M$ we have $\bar M.\bar G = 1$ and then the long exact sequence in cohomology shows that
$H^0(\ko_{2\bar G}(\bar M) = H^0( \ko_{\bar G}(\bar M))$ defines a base-point free pencil on $2\bar G$. 
\end{proof}

\begin{proof}[Proof of Proposition \ref{prop: double cover Hirzebruch}]
From Lemma \ref{lem: bisection on non-multiple} and Lemma  \ref{lem: bisection on multiple fibre} we see that for all $b\in \IP^1$ we gave  $h^0(\inverse {\bar \pi}(b), \bar M_{\inverse {\bar \pi}(b)}) = 2$. Therefore by the base-change theorem
 \cite[I.8.5]{BHPV} the pushforward $\bar\pi_*\bar M$ is locally free of rank two, which proves $(i)$. By loc. cit. the sections define a base-point free pencil on every fibre, so we also have $(ii)$. By standard theory of projective bundles \cite[II.7]{Hartshorne} we get the morphism $\bar\theta$. It is finite, because $\bar M$ is ample by Proposition \ref{prop: Xbar} and of degree two, because it has degree two on the general fibre. 
 
 If $\Gamma$ is a fibre of $\bar\pi$ then again by Proposition \ref{prop: Xbar} we have by the Hurwitz formula
 \[ \bar\theta ^* \Gamma = K_{\bar X} = \bar\theta^*\left(K_P + {\textstyle\frac12} \Delta\right).\]
 Since $2\bar G$ is a double fibre, the fibre $\Gamma_p$ has to be contained in the branch locus, so $\Delta = \Gamma_p + \Delta'$. 
 The contraction of $\hat F$ to an elliptic singularity of degree one forces $\Delta$ to have a $[3,3]$ point at the image point as described in Remark \ref{rem: admissible33}. 
 Since in addition $\hat F$ is irreducible, the elliptic point has to be of type $T_{2,3,6}$ or $T_{2,3,7}$, which gives the possibilities described for the branch divisor. 
  This proves $(iii)$ and $(iv)$. 

 For the last item, we just have to note that from Lemma \ref{lem:bisection} $E_1$ is a bisection, so $\bar{E_1}$ is, too. It remains to show that $\bar E_1$ is not in the branch locus of the map  $\bar \theta \colon \bar X \to P$. To see this we consider a general fibre $\bar X_t$ of $\bar \pi$. The restriction $\bar E_1|_{\bar X _t} = \bar r_1 + \bar r_2$ consists of  two different points, because $\bar E_1$ is a reduced curve and thus cannot be tangent to every fibre.
  The restriction of the map $\bar\theta_t = \bar \theta|_{\bar X_t}\colon \bar X_t \to P_t \isom \IP^1$ is given by the sections in $H^0(\ko_{\bar X_t}(\bar r_1 + \bar r_2))$ and thus $r_1+ r_2 = \bar E_1|_{\bar X _t}$ is one of the fibres of $\bar \theta _t$. Hence $r_i$ is not in the branch locus of $\bar \theta_t$ and $\bar E_1$ is not in the branch locus of $\bar\theta$.
   \end{proof}

\begin{lem}\label{lem: proj bundle}
We have $\hat\pi_*\hat{M} \isom \ko_{\mathbb{P}^1}(2)\oplus \ko_{\mathbb{P}^1}(p_a(E_1)+1)$ and thus $P = \mathbb{P}(\hat\pi_*\hat{M})\isom \mathbb{F}_{1-p_a(E_1)}$ and $\bar \theta (\bar E_1 )= C_\infty$ is a section of minimal square, unique if $p_a(E_1)=0$. 
\end{lem}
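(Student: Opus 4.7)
The plan is to realise $\mathcal{E} := \hat\pi_*\hat M = \bar\pi_*\bar M$ as an extension
\[0 \to \ko_{\IP^1}(2) \to \mathcal{E} \to \ko_{\IP^1}(p_a(E_1)+1) \to 0\]
which then splits, and afterwards to identify $\bar\theta(\bar E_1)$ as a minimal section by a direct intersection computation.

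First I would construct the sub-line-bundle by pushing forward the structure sequence of $\hat E_1 \subset \hat X$ twisted by $\hat M$,
\[ 0 \to \ko_{\hat X}(\hat M - \hat E_1) \to \ko_{\hat X}(\hat M) \to \ko_{\hat E_1}(\hat M|_{\hat E_1}) \to 0. \]
Since $\hat M - \hat E_1 = \hat\pi^*L + \hat F$ by \eqref{eq: M}, the projection formula reduces $\hat\pi_*\ko(\hat M - \hat E_1)$ to $L \otimes \hat\pi_*\ko(\hat F)$; the sequence $0 \to \ko_{\hat X} \to \ko(\hat F) \to \ko_{\hat F}(\hat F) \to 0$, combined with the vanishing of $h^0$ of the degree-$(-1)$ bundle $\ko_{\hat F}(\hat F)$ on the elliptic curve $\hat F$, yields $\hat\pi_*\ko(\hat F) = \ko_{\IP^1}$, and therefore $\hat\pi_*\ko(\hat M - \hat E_1) = \ko_{\IP^1}(2)$. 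The cokernel of the resulting inclusion $\ko_{\IP^1}(2) \hookrightarrow \mathcal{E}$ embeds into $(\hat\pi|_{\hat E_1})_*\ko_{\hat E_1}(\hat M|_{\hat E_1})$, which is locally free of rank two on $\IP^1$ (finite flat pushforward of a line bundle), so the cokernel is a line bundle $\mathcal{L}$.

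To pin down $\deg\mathcal{L}$ I would compute $\chi(\mathcal{E}) = \chi(\bar M)$. Riemann--Roch on $\hat X$ with $\hat M^2 = 9 + E_1^2$ and $K_{\hat X}\cdot\hat M = 2$ gives $\chi(\hat M) = 2 + (7+E_1^2)/2$, and the adjunction identity $E_1^2 = 2p_a(E_1) - 3$ (obtained from $K_X\cdot E_1 = 1$) simplifies this to $\chi(\hat M) = 4 + p_a(E_1)$. The elliptic singularity at $\bar p$ contributes $R^1\bar\sigma_*\ko_{\hat X} = \IC_{\bar p}$, and the projection formula then gives $R^1\bar\sigma_*\hat M = \IC_{\bar p}$, so $\chi(\bar M) = \chi(\hat M) + 1 = 5 + p_a(E_1)$. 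Comparing with $\chi(\ko_{\IP^1}(2) \oplus \mathcal{L}) = 4 + \deg\mathcal{L}$ forces $\deg\mathcal{L} = p_a(E_1)+1$. Since $\Ext^1(\ko(p_a(E_1)+1), \ko(2)) = H^1(\IP^1, \ko(1-p_a(E_1)))$ vanishes for $p_a(E_1) \in \{0,1\}$, the extension splits, and we obtain $\mathcal{E} \isom \ko_{\IP^1}(2) \oplus \ko_{\IP^1}(p_a(E_1)+1)$ and thus $P \isom \IF_{1-p_a(E_1)}$.

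Finally, by Proposition~\ref{prop: double cover Hirzebruch}(v) the image $\bar\theta(\bar E_1)$ is a section of $P$; combining the splitting with the identity $\ko_P(1) \sim C_\infty + 2\Gamma$ on $\IF_{1-p_a(E_1)}$ and $\bar M = \bar\theta^*\ko_P(1)$ gives $\bar\theta^*C_\infty \sim \bar M - \bar\pi^*L \sim \bar E_1$. Using the Mumford pullback $\bar\sigma^*\bar E_1 = \hat E_1 + \hat F$ (forced by orthogonality with $\hat F$) one computes $\bar E_1^2 = (\hat E_1 + \hat F)^2 = E_1^2 + 1 = 2p_a(E_1) - 2$, and the projection formula then yields $\bar\theta(\bar E_1)^2 = \bar E_1^2/2 = p_a(E_1) - 1 = -(1-p_a(E_1))$, the minimal self-intersection of a section on $\IF_{1-p_a(E_1)}$; hence $\bar\theta(\bar E_1) = C_\infty$, unique when $p_a(E_1) = 0$ because $\IF_1$ admits only one section of negative square. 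The main obstacle is the Euler-characteristic bookkeeping across the elliptic singularity (needed to recover the correct degree of the quotient) and the consistent use of Mumford's rational intersection on the singular surface $\bar X$.
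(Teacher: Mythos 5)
Your proposal is correct, and on the two decisive points it takes a genuinely different route from the paper. For the splitting type, both arguments start from the same sub-line-bundle $\ko_{\IP^1}(2)=\hat\pi_*(\hat\pi^*L+\hat F)\hookrightarrow \hat\pi_*\hat M$, but the paper identifies the quotient by pushing forward the restriction sequence to $\hat E_1$, computing $\hat\pi_*K_{\hat E_1/\IP^1}$ by relative duality and then extracting $\ko_{\IP^1}(p_a(E_1)+1)$ as the kernel of a map to $R^1\hat\pi_*(K_{\hat X/\IP^1}-2\hat G)$; you instead observe that the quotient is a line bundle (torsion-free as a subsheaf of the rank-two pushforward from $\hat E_1$) and pin down its degree by an Euler-characteristic count, $\chi(\hat M)=4+p_a(E_1)$ by Riemann--Roch together with the length-one correction $R^1\bar\sigma_*\hat M\isom\IC_{\bar p}$ at the elliptic point. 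Your bookkeeping is in fact the more robust of the two: it is consistent with the paper's own remark that $h^0(\hat M|_{2\hat F+2\hat G})>2$, which forces $R^1\hat\pi_*\hat M$ to have a length-one torsion contribution under the multiple fibre, whereas the paper's duality argument asserts $R^1\hat\pi_*\hat M=0$ (relative duality only gives $(f_*\kf)^\vee$ up to a torsion correction term, and here that correction is exactly the missing length one needed to reconcile $\deg\hat\pi_*\hat M=3+p_a(E_1)$ with $\chi(\hat M)=4+p_a(E_1)$). The one step you should make explicit is $\chi(\mathcal E)=\chi(\bar M)$, i.e.\ $R^1\bar\pi_*\bar M=0$: this follows from Grauert's theorem since $h^1$ of the restriction of $\bar M$ to every fibre of $\bar\pi$ vanishes ($h^0=2=\chi$ on each fibre by Lemmata \ref{lem: bisection on non-multiple} and \ref{lem: bisection on multiple fibre}), which is the same input already used for Proposition \ref{prop: double cover Hirzebruch}(i). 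For the identification $\bar\theta(\bar E_1)=C_\infty$ the paper computes $p_a(\bar E_1)=p_a(E_1)+1$ from the local ring at the elliptic point and applies the Hurwitz formula to the double cover $\bar E_1\to\IP^1$, while you compute $\bar E_1^2=2p_a(E_1)-2$ via the Mumford pullback $\bar\sigma^*\bar E_1=\hat E_1+\hat F$ and halve it using $\bar\theta^*\bar\theta(\bar E_1)=\bar E_1$ (valid since $\bar E_1$ is not in the branch locus by Proposition \ref{prop: double cover Hirzebruch}(v)); this is shorter and avoids the local analysis, and your preliminary linear-equivalence observation $\bar\theta^*C_\infty\sim\bar E_1$ would alone suffice to force $k=0$ in $\bar\theta(\bar E_1)\sim C_\infty+k\Gamma$. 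Both of your substitutions are sound.
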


\begin{proof}
  First note that  $\sigma^* F = \hat F + \hat{G}$, thus
  \[\sigma_* \ko_{\hat X}( \hat F ) =  \sigma_*\ko_{\hat X}( \sigma^*F - \hat{G}) =  \ki_{x}(F)\subset \ko_{X}( F),\]
so that by \cite[Ch.7, Exercise 2]{Friedman} we have
 \[ \hat\pi_* \ko_{\hat X} (\hat F  )\subset \pi_*\ko_{X} (F) = \ko_{\IP^1}.\]
  Since the left hand side has a global section, this inclusion is an equality. This implies that the pushforward $\hat\pi_*(\hat \pi^*L + \hat F  ) = L$, where $L := \pi_*\omega_{X/\IP^1}$.

  Now consider the exact sequence
   \[ 0 \to \hat\pi^*L + \hat F  \to \hat{M} \to \hat{M}|_{\hat E_1} = K_{\hat E_1/{\IP^1}}\to 0.\]
Applying $\hat\pi_*$ we get by the projection formula, the above computation and using both descriptions of $\hat{M}$ from \eqref{eq: M},
  \begin{equation}\label{eq: ex seq type A}
    \begin{split}
      & 0\to L = \hat\pi_*\left(\hat \pi^*L + \hat F  \right) \to \hat\pi_* \hat{M} \to \hat \pi_*K_{\hat E_1/{\IP^1}} 
      \to\\
      &\qquad  R^1\hat\pi_*\left(K_{\hat X/{\IP^1}} -2G\right) \to R^1\hat\pi_* \hat{M}\to ...
    \end{split}
  \end{equation}

   By relative duality  \cite[\href{https://stacks.math.columbia.edu/tag/0AU3}{Section 0AU3}]{stacks-project} we have
   \[ \hat{\pi}_*K_{\hat E_1/{\IP^1}}  = \left(\hat\pi_*\ko_{\hat E_1}\right)^\vee = \ko_{\IP^1} \oplus \ko_{\IP^1}(p_a(E_1)+1),\]
   where we use that $\chi(\ko_{E_1}) = 1 - p_a(E_1)$, or equivalently, that the double cover $E_1 \to \IP^1$ has $2p_a(E_1)-2$ branch points.
   
   Using duality again we compute
      \[R^1\hat\pi_*\left(K_{\hat X/{\IP^1}} -2\hat{G}\right) = R^1\hat\pi_*\shom\left(\ko_{\hat X}  (2\hat{G}) ,  K_{\hat X/{\IP^1}}\right) = \shom\left(\hat\pi_* \ko_{\hat X}  (2\hat{G}), \ko_{\IP^1}\right)\isom \ko_{\IP^1},\] where the last identification is proved by pushing forward the exact sequence
  \[ 0 \to \ko_{\hat X} \to\ko_{\hat X} (2\hat{G}) \to \ko_{2\hat{G}}( 2\hat{G})\to 0 .\]
   Indeed, $\hat\pi_* \ko_{2\hat{G}}(2\hat{G}) $ is a skyscraper sheaf supported at the point $\hat \pi(\hat{G})$ with stalk $H^0(2\hat{G}, \ko_{2\hat{G}}(2\hat{G}))$. This is zero, because $\hat{G}^2 = -1$. 
 
  Repeating this for $\hat{M} = K_{\hat X/{\IP^1}} - 2\hat{G} + \hat E_1$ we get
  \[R^1\hat\pi_* \hat{M} = \left(\hat\pi_*\ko_{\hat X}( 2\hat{G}-\hat E_1)\right)^\vee=0\]
  because $\ko_{\hat X}(2\hat{G}-\hat E_1)$ restricted to the general fibre has negative degree and thus no sections, so $\hat\pi_*\ko_{\hat X}(2\hat{G}-\hat E_1)$ is a torsion sheaf and its dual is trivial.

  Therefore the  sequence \eqref{eq: ex seq type A} is isomorphic to
  \[ 0 \to \ko_{\IP^1}(2) \to \hat\pi_*\hat{M} \to \ko_{\IP^1} \oplus \ko_{\IP^1}(p_a(E_1)+1) \to \ko_{\IP^1}\to 0,\]
  and since $\Hom(\ko_{\IP^1}(p_a(E_1)+1), \ko_{\IP^1}) = 0$ and 
  \[\Ext^1(\ko_{\IP^1}(p_a(E_1)+1), \ko_{\IP^1}(2)) = H^1(\IP^1, \ko(1-p_a(E_1))) = 0\] we have $\hat\pi_*\hat{M} \isom \ko_{\IP^1}(2)\oplus \ko_{\IP^1}(p_a(E_1)+1)$.

Now we have to compute the class of the section  (Proposition \ref{prop: double cover Hirzebruch}) $\bar\theta (\bar E_1) \sim C_\infty + k\Gamma$ in $P$.

First of all we look once again  at the local model near $p$ described by the ring \eqref{eq: ring at ellipic point}. In this model the curve $\bar E_1$ is locally described by the vanishing of the section $a$, thus is a cuspidal curve, since $f_6\equiv b^3\mod a$.
Since $\hat E_1$ was smooth at the intersection with $\hat F$,  we have $p_a(\bar E_1) = p_a (\hat E_1) + 1$ where $p_a(\hat{E_1})=p_a(E_1)$. By the Hurwitz formula, the double cover $\bar E_1 \to \bar \theta(\bar E_1) \isom \IP^1$ has 
\[\Delta.\bar \theta(\bar E_1) = 2p_a(\bar E_1) +2 = 2p_a( E_1) +4\]
branch points. 
We then compute using Proposition \ref{prop: Xbar}\,$(iii)\,(b)$
\begin{multline*} 
1 = \bar \theta(\bar E).\Gamma = \bar\theta(\bar E_1).\left ( K_P+ \frac12 \Delta\right) 
= (C_\infty + k\Gamma). \left( -2C_\infty -(2+1-p_a(E_1))\Gamma\right) + p_a(E_1) +2\\
= -2(p_a(E_1) - 1)- (3- p_a(E_1)) -2k +p_a(E_1) +2 =-2k+1,
\end{multline*}
so $k =0 $ and $\bar \theta (\bar E_1) \sim C_\infty$. 

If $p_a (E_1) = 0$ the negative section is unique, if $p_a(E_1)=1$, then the image is the horizontal section passing through the $[3,3]$ point of the branch divisor. 
\end{proof}

To conclude the proof of Theorem \ref{thm: type E}, we now analyse the individual cases using  Proposition \ref{prop: double cover Hirzebruch} and Lemma \ref{lem: proj bundle}.
\begin{prooflist}
\item[Type $E_{12}$] In this case $p_a(E_1) = 1$,so  \[\begin{tikzcd}
 \hat X \rar\arrow{drr}&  \bar X \rar{\bar \theta}\arrow{dr}{\hat \pi}& P = \IP^1\times \IP^1 \arrow{d}\\ && \IP^1  
   \end{tikzcd}
 \]
 with 
 $\bar \theta(\bar E_1) = C_\infty$ and $\Delta = \Gamma_p + \Delta'\in |4C_\infty +6\Gamma|$, which has a $[3,3]$-point at the intersection of $\Gamma_p$ and $C_\infty$. 
 
 Since $\bar E_1$ is a curve of arithmetic genus two with two cusp singularities, $\Delta|_{C_\infty}$ consists of two points with multiplicity three. 
 
In other words, $W$ arises exactly as in  Example \ref{ex: E12}.
 \item[Types $E_{13}$ and $E_{14}$] In these cases $p_a(E_1) = 0 $, so we have 
  \[\begin{tikzcd}
 \hat X \rar\arrow{drr}&  \bar X \rar{\bar \theta}\arrow{dr}{\hat \pi}& P = \IF_1\dar\\ && \IP^1  
   \end{tikzcd}
 \]
 with $\bar\theta(\bar E_1) = C_\infty$, the unique negative section. The branch divisor $\Delta$  of $\bar \theta$ is in $|4C_\infty + 8 \Gamma|$ with the following singularities:
 \begin{itemize}
  \item $\Delta = \Gamma_p + \Delta'$ has a $[3,3]$ point at $ \Gamma_p \cap C_\infty$ to produce the elliptic singularity of degree one;
  \item denoting the fourth point of $\Delta|_{C_\infty}$ with  $q\in C_\infty \setminus \Gamma_p$ and the fibre through $q$ with $\Gamma_q$ then $\Delta$ has a unique further singularity either of type $A_1$ (node) or $A_2$ (cusp) at a point of $\Gamma_q$, as given in Table \ref{tab: E13 sing fibre} for type $E_{13}$ and in Table \ref{tab: E14 sing fibre} for type $E_{14}$. 
 \end{itemize}
In other words, $W$ arises exactly as in  Example \ref{ex: E13} or Example \ref{ex: E14}.
\end{prooflist}
This concludes the proof of Theorem \ref{thm: type E}. \hfill\qed

\section{Surfaces with an exceptional unimodal double points of type  $Z_n$ or $W_n$}
\label{sect: Z and W}
We now turn to the exceptional unimodal double points of type $Z_n$ or $W_n$, which are listed in Table \ref{tab: info Z and W}. The analysis proceeds in a similar vain as in Section \ref{sect: En}, but, due to the nature of the singularities, the analysis is a bit simpler, because  the smooth birational models are special K3 surfaces already described in \cite{GPSZ22}.

In all these cases the exceptional divisor $E$ of $f$ is reduced and we have
\[ E^2 = - 2,\; K_X.E = 2, \text{ and } K_X^2 = -1.\]

\pgfmathwidth{"Kodaira"} 
\setlength{\Breite}{3.2cm}
\begin{table}
  \caption{The exceptional unimodal points of type $Z_n$ and $W_n$.}
  \label{tab: info Z and W}
 \begin{tabular}{ccccc}
 \toprule
 Type &\begin{minipage}[b]{\pgfmathresult pt} Kodaira\\ fibre \end{minipage} & Blow ups & $E$ & equation\\
 \midrule
 $Z_{11}$& $II$&$(2)$  &
 \begin{minipage}[c]{\Breite}
 \centering
 \begin{tikzpicture}[curve, scale = .5]
  \draw (1,2) to[out  = -90, in = 0] (0,0) to [in = 90,  out = 0] (1, -2);
\node at (1,-2.5) {$E_1$};
\node at (0.9,2.5){$-2$};
 \end{tikzpicture}
 \end{minipage}
 &
$yz^3 + y^5 +ay^4z$
\\
  $Z_{12}$&$III$&$(2,0)$ &
  \begin{minipage}[c]{\Breite}
 \centering
  \begin{tikzpicture}[curve, scale = .5]
\draw (1,2) to[out  = -135, in = 90] (0,0) to [in = 135,  out = -90] (1, -2);
\draw (-1,2) to[out  = -45, in = 90] (0,0) to [in = 45,  out = -90] (-1, -2);
\node at (-1,-2.5) {$E_1$};
\node at (1,-2.5) {$E_2$};
\node at (1,2.5){$-2$};
\node at (-1.2,2.5){$-4$};
 \end{tikzpicture}
 \end{minipage}
 &
 $yz^3 + y^4z + ay^3z^2$ 
 \\
  $Z_{13}$&$IV$&$(2,0,0)$  &
  \begin{minipage}[c]{\Breite}
 \centering
  \begin{tikzpicture}[curve, scale = .5]
\draw (0:2) to (180:2);
\draw (60:2.3) to (240:2.3);
\draw (120:2.3) to (300:2.3);
\node at (-1.2,-2.5) {$E_2$};
\node at (-2.6,0) {$E_1$};
\node at (1,2.5){$-2$};
\node at (-1.2,2.5){$-2$};
\node at (2.6,0) {$-4$};
\node at (1.2,-2.5) {$E_3$};
 \end{tikzpicture}
 \end{minipage}
 &
 $yz^3 + y^6 + ay^5z$ 
 \\
\midrule
  $W_{12}$&$III$&$(1,1)$  & \begin{minipage}[c]{\Breite}
 \centering
  \begin{tikzpicture}[curve, scale = .5]
\draw (1,2) to[out  = -135, in = 90] (0,0) to [in = 135,  out = -90] (1, -2);
\draw (-1,2) to[out  = -45, in = 90] (0,0) to [in = 45,  out = -90] (-1, -2);
\node at (-1,-2.5) {$E_1$};
\node at (1,-2.5) {$E_2$};
\node at (1,2.5){$-3$};
\node at (-1.2,2.5){$-3$};
 \end{tikzpicture}
 \end{minipage}
 &
$z^4+y^5+ay^3z^2$
 \\
  $W_{13}$&$IV$&$(1,1,0)$  & \begin{minipage}[c]{\Breite}
 \centering
  \begin{tikzpicture}[curve, scale = .5]
\draw (0:2) to (180:2);
\draw (60:2.3) to (240:2.3);
\draw (120:2.3) to (300:2.3);
\node at (-1.2,-2.5) {$E_2$};
\node at (-2.6,0) {$E_1$};
\node at (1,2.5){$-3$};
\node at (-1.2,2.5){$-2$};
\node at (2.6,0) {$-3$};
\node at (1.2,-2.5) {$E_3$};
 \end{tikzpicture}
 \end{minipage}
 &
$z^4 + y^4z + ay^6$
 \\
\bottomrule 
\end{tabular}
 \end{table}

\begin{thm}\label{thm: Z and W}
 Let $W$ be a Gorenstein surface with $K_W^2 = 1$, $p_g(W) = 2$ and $q(W)=0$ and a unique singular point which is of type $Z_{11}$ $Z_{12}$, $Z_{13}$,$W_{12}$, or $W_{13}$. Then there is a commutative diagram as in Figure \ref{fig: Z and W}. For each type we find one irreducible family, the general member of which is as described in \cite[Prop. 7.1]{GPSZ22}.
\end{thm}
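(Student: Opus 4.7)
The plan is to follow the blueprint of the proof of Theorem \ref{thm: type E}, with the significant simplification that the minimal model of $X$ now turns out to be a K3 surface rather than a properly elliptic one, which makes the reconstruction of the birational model more direct. As a first step I would collect the numerical input: Lemma \ref{lem: basic stuff} combined with Proposition \ref{prop: min res p_g=2} gives $p_g(X)=1$, $q(X)=0$ together with the values $E^2=-2$, $K_X.E=2$ and $K_X^2=-1$ recorded at the start of this section. Since $K_X^2<0$, $X$ is not minimal; write $\sigma\colon X\to \bar X$ for its minimal model, so that $\kappa(\bar X)\geq 0$.

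I would next show that $\bar X$ is a K3 surface by ruling out $\kappa(\bar X)\in\{1,2\}$ by the same kind of numerical argument as in Lemma \ref{classification of E_i}. If $\bar X$ is of general type, then $0<K_{\bar X}^2\leq K_W^2=1$ forces $K_{\bar X}^2=1$ and hence $\sigma$ contracts exactly two $(-1)$-curves; ampleness of $K_W$ forces $E.G\geq 2$ for every such curve $G$, so pushing $E$ down yields a negative intersection with $K_{\bar X}$, contradicting nefness. If instead $\bar X$ were properly elliptic, $\sigma$ would contract a single $(-1)$-curve $G$ with $m:=G.E\geq 2$; the projection formula then gives $K_{\bar X}.\sigma_*E=2-m\leq 0$, and for $m=2$ the arithmetic genus of $\sigma_*E$ equals $p_a(E)+\binom{m}{2}=2$, which is incompatible with $\sigma_*E$ lying in a fibre of the elliptic fibration on $\bar X$. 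Together with $p_g(\bar X)=1$ and $q(\bar X)=0$ this forces $\bar X$ to be a K3 surface.

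With $\bar X$ a K3 at hand, I would then reconstruct the Gallardo--Pearlstein--Schaffler--Zhang description by reversing their construction, along the template of Proposition \ref{prop: double cover Hirzebruch} and Lemma \ref{lem: proj bundle}. Concretely, on a suitable blow-up $\hat X$ of $X$ I would build a nef-and-big line bundle $\hat M$ from a twist of the pull-back of the bicanonical class of $W$, exploiting that by Proposition \ref{prop: min res p_g=2} the bicanonical map of $W$ already realises it as a double cover of $\IP(1,1,2)$. Contracting the curves on which $\hat M$ is numerically trivial produces a model carrying the prescribed elliptic Gorenstein singularity, and $\hat M$ descends to an ample line bundle $\bar M$ exactly as in Lemma \ref{lem: Mbar}. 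A relative analysis over $\IP^1$ then exhibits $\bar X$ as a double cover $\bar\theta\colon\bar X\to P$ of a Hirzebruch surface whose branch divisor is determined by the data in Table \ref{tab: info Z and W}.

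The main obstacle, as in the $E_n$ case, is the closing case-by-case analysis: for each of the five types $Z_{11}$, $Z_{12}$, $Z_{13}$, $W_{12}$, $W_{13}$ one has to pin down the admissible local branch configuration on $P$ that produces the required singularity, and check that the set of such branch divisors forms an irreducible open subset of a linear subsystem on $P$. This simultaneously supplies the commutative diagram claimed in Figure \ref{fig: Z and W} and the irreducibility of each family, and the final identification with \cite[Prop.~7.1]{GPSZ22} reduces to matching the branch data.
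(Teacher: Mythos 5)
Your first half is sound and essentially matches the paper: the numerical data $E^2=-2$, $K_X.E=2$, $K_X^2=-1$ together with $p_g(X)=1$, $q(X)=0$ from Proposition \ref{prop: min res p_g=2} does force the minimal model to be a K3 surface, with $f\colon X\to S$ a single blow up. Your exclusion of general type is the paper's argument (Lemma \ref{K3}), modulo the small imprecision that when the second blown-up point lies on the first exceptional curve one of the two curves is a $(-2)$-curve on $X$ and ampleness only gives $E.G_2\geq 1$ there; the contradiction still goes through. Your exclusion of $\kappa=1$ via $K_S.\sigma_*E=2-m$ and, for $m=2$, the positivity of $(\sigma_*E)^2$ is a valid alternative to the paper's shorter route, which instead notes that $K_X=f^*K_S+G$ must be reduced and irreducible by Proposition \ref{prop: min res p_g=2}, forcing $K_S$ to be trivial.

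The second half, however, goes down the wrong road. You propose to transplant the $E_n$ machinery — a line bundle $\hat M$ on a blow-up, contraction to an elliptic Gorenstein point, and a relative analysis over $\IP^1$ exhibiting a double cover of a Hirzebruch surface. That template is tied to $X$ being properly elliptic: the fibration $\pi\colon X\to\IP^1$ is what makes $\hat\pi_*\hat M$ a rank-two bundle and $P$ a Hirzebruch surface. Here $S$ is a K3, there is no distinguished fibration over $\IP^1$, and the elliptic Gorenstein point sits on $W$ (obtained by contracting $E$), not on the K3 side of the picture. The object that actually drives the classification is the genus-two polarization $\hat E=f_*E$ on $S$: from $G.E=-E^2=2$ one gets $\hat E^2=2$, $p_a(\hat E)=2$, hence $h^0(\ko_S(\hat E))=3$, and after checking that $|\hat E|$ has no fixed part and contracting the $(-2)$-curves inside $\hat E$ one obtains a degree-two morphism $\bar S\to\IP^2$ branched in a sextic $\Delta$ (Lemma \ref{double cover}); this is the diagram of Figure \ref{fig: Z and W}, with $\IP^2$ and a linear projection to $\IP^1$, not a Hirzebruch surface over $\IP^1$. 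The closing case analysis is then about plane sextics with prescribed singularities and prescribed intersection with the line $L=\psi(\bar E)$ (Tables \ref{tab: without Sbar} and \ref{tab: with Sbar}), and irreducibility comes from exhibiting the special configurations as specializations ($\lambda\to 0,1$) inside one linear family — none of which is reachable from the branch-divisor-on-$P$ setup you describe. So the proposal as written has a genuine gap from the third paragraph on: the key idea of using $|f_*E|$ as a degree-two polarization on the K3 is missing.
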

 \begin{figure}[h]
  \caption{Construction of surfaces $W$ with one singularity of type $Z_n$ or $W_n$}
  \label{fig: Z and W}
 
 \begin{tikzcd}
X \dar{\phi} \arrow{rr}{f} && S\dar{\bar f}\\
W \arrow[<->, dashed]{rr}{\text{birational}} \dar{2:1}&& \bar S\dar{2:1}\\
\IP(1,1,2)\arrow[dashed]{dr} \arrow[<->, dashed]{rr}{\text{birational}} && \IP^2\arrow[dashed]{dl}{\text{ linear projection}}\\
& \IP^1
   \end{tikzcd}
 \begin{minipage}{.4\textwidth}
  \begin{itemize}
   \item $S$ a smooth K3 surface, 
   \item $\bar S$ a K3 surface with at most one canonical singularity,
   \item $f$ one blow up,
   \item $\bar f$, $\phi$ the minimal resolutions,
   \end{itemize}  
  \end{minipage}
 \end{figure}
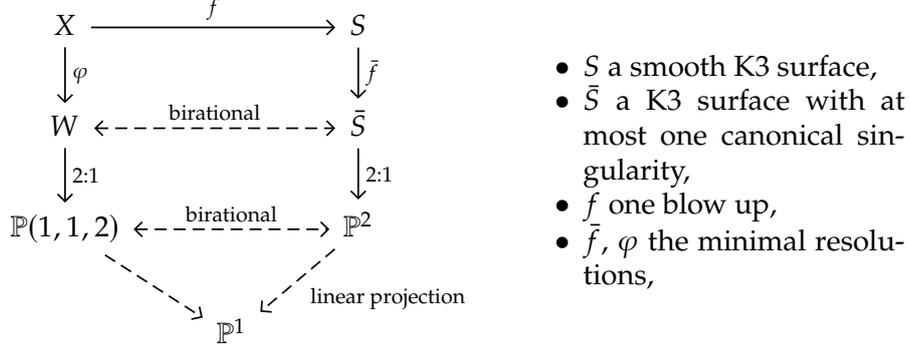

\subsection{Proof of Theorem \ref{thm: Z and W}}
\begin{lem}\label{K3} In the case of singularities of type $Z_{11}, Z_{12}, Z_{13},W_{12}$, and $W_{13}$, in Diagram \eqref{Diagram1} the surface $S$ is a K3 surface and $f\colon X \to S$ is one blow up.
\end{lem}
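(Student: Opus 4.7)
The approach is to exploit Proposition \ref{prop: min res p_g=2} to pin down the effective canonical divisor on $X$ and identify it as a $(-1)$-curve, then contract it.

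First I would verify that Proposition \ref{prop: min res p_g=2} applies in the present setting: $W$ is Gorenstein with $K_W$ ample (hence Cartier) and $p_g(W) = 2$, and a $Z_n$- or $W_n$-type unimodal point is an elliptic Gorenstein singularity by Example \ref{ex: unimodal}. The proposition then yields $p_g(X) = 1$, $q(X) = 0$, and that the unique effective canonical divisor $C \in |K_X|$ is reduced and irreducible.

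Next, using the numerical data $K_X^2 = -1$ stated at the beginning of the section, I would plug $C \sim K_X$ into the adjunction formula: we immediately get $C^2 = -1$ and $K_X \cdot C = -1$, so $2 p_a(C) - 2 = -2$, i.e.\ $p_a(C) = 0$. Thus $C$ is a smooth rational curve with $C^2 = -1$, i.e.\ a $(-1)$-curve. Let $f \colon X \to S$ be its contraction; from $K_X \sim C$ and the blow-up formula $K_X = f^*K_S + C$ I would deduce $K_S \sim 0$. In particular $K_S$ is nef, so $S$ is minimal (any $(-1)$-curve would have negative intersection with a trivial canonical), and together with $p_g(S) = 1$ and $q(S) = 0$ this characterises $S$ as a K3 surface.

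I do not expect any serious obstacle: once Proposition \ref{prop: min res p_g=2} is invoked, the rest is a short numerical argument combined with standard facts about blow-ups. The only thing worth being careful about is the uniqueness of the minimal model, which is legitimate here because $\kappa(S) = 0$; this ensures that the surface produced by contracting $C$ genuinely coincides with the $S$ appearing in Diagram \eqref{Diagram1}, so that $f\colon X \to S$ is the single blow-up claimed in the statement.
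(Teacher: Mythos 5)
Your argument is correct, and it takes a genuinely different and shorter route than the paper. The paper first rules out $\kappa(X)=2$ by a case analysis: using $-1\le K_S^2\le 1$ from Lemma \ref{lem: basic stuff} it assumes $X\to S$ is a composition of two blow-ups, distinguishes whether the two exceptional curves meet, and derives $K_S.\hat E<0$ in each case, contradicting nefness of $K_S$; only then does it conclude $K_S^2=0$, that $f$ is a single blow-up (from $K_X^2=-1$), and finally that $K_S$ is trivial because the reduced irreducible canonical curve $K_X$ must coincide with the $(-1)$-curve $G$. You instead go straight to the point: Proposition \ref{prop: min res p_g=2} gives a reduced irreducible $C\in|K_X|$, adjunction with $C^2=K_X.C=K_X^2=-1$ forces $p_a(C)=0$, so $C$ is a $(-1)$-curve, and contracting it yields $f^*K_S\sim 0$, hence $K_S\sim 0$ and $S$ is a K3 surface; uniqueness of the minimal model in Kodaira dimension $0$ identifies this contraction with the $S$ of Diagram \eqref{Diagram1}. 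Your version buys a cleaner proof that dispenses with the general-type case analysis entirely, since the possibility $\kappa(X)=2$ is excluded automatically once $K_S\sim 0$; the paper's longer route does not yield any additional information used later, so nothing is lost.
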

\begin{proof}

First, we note that $X$ cannot be a rational surface since $p_g(X)=1$. Thus, we have $\kappa(X)\geq 0$. We recall that $K_X^2=-1, K_X.E=2$ and $K_W^2=1$.

Assume for contradiction that  $X$ is of general type. By using that $K_X^2=-1$ and Lemma \ref{lem: basic stuff}, we obtain the inequality $-1 \leq K_S^2\leq 1$. Thus, we have that $K_S^2=1$, and then the minimal resolution $f\colon X\to S$ must be the composition of two blow ups, whose exceptional curves we call  $G_1$ and $G_2$. 

Hence,
\[f^*K_S+m_1G_1+m_2G_2 = K_X=\varphi^*K_W-E,\]

where $m_1,m_2$ are non negative integers. So, it follows that 
\[K_W^2=(\varphi^*K_W).f^*K_S.+(\varphi^*K_W).E+m_1(\varphi^*K_W).G_1+m_2(\varphi^*K_W).G_2,\]

and then
\[K_W^2=K_S^2+K_S.(f_*E)+m_1K_W.(\varphi_*G_1)+m_2K_W.(\varphi_*G_2),\]

where each term is non-negative. Since $K_W^2=K_S^2=1$ then we have that $K_S.(f_*E)=m_1K_W.(\varphi_*G_1)=m_2K_W.(\varphi_*G_1)=0$. It follows that $m_1\varphi_*G_1=m_2\varphi_*G_2=0$ and hence $S=X$. But then $0=K_S.(f_*E)=K_X.E=2$, which is a contradiction. Thus, we conclude that $\kappa(X)=0$ or $1$, and on the minimal model $S$ we have $K_S^2 = 0$. 

Since $K_X^2=-1$ by Lemma \ref{lem: basic stuff}, the map  $f\colon X \to S$ is one blow up.
We also have that $p_g(S)=p_g(X)=1$, so we can choose an effective canonical divisor $K_S$ on $S$. Then $K_X = f^* K_S +G$, where $G$ is the $(-1)$ curve in $X$. 

By Proposition \ref{prop: min res p_g=2}, $K_X = \tilde G + \sum a_i E_i $ where $\tilde G$ is reduced and irreducible. None of the $E_i$ satisfies $E_i^2 = -1$, so $\tilde G = G$ is the exceptional curve of the blow up and $\sum a_i E_i =f^* K_S$. If all $a_i =0$, then $K_S$ is trivial an $q=0$ implies that $S$ is a K3 surface.

So assume for contradiction that
$f^* K_S>0$, so $S$ is a relatively minimal elliptic fibration and $K_S$ is a union of multiples of  Kodaira fibres. 
But $-1 = K_X.\tilde G = \tilde G^2 + \sum a_i E_i.\tilde G$ shows that the blow down $f$ happens away from the support of $f^*K_S$, so $K_S$ is supported on a configuration of curves as given in Table \ref{tab: info Z and W}. This is impossible, so $S$ is a K3 surface as claimed.
\end{proof}

\begin{table}
 \caption{Curve configurations for the construction of Type $Z_{11}$ and $W_{12}$}
  \label{tab: without Sbar}
  \setlength{\Breite}{2.3cm}
 \begin{tabular}{ccccc}
 \toprule
  Type & in $X$  & in $S=\bar S$ 
  & branch in $\IP^2$ & \begin{minipage}{1.75cm}
  \small
      $\dim \IL(\Delta) -$ \\ $ \dim \Aut(\IP^2)$
  \end{minipage}
  \\   \midrule
\multirow{3}*{$Z_{11}$} & 
  \begin{minipage}[c]{\Breite}
 \centering \begin{tikzpicture}
\draw[E1] (0, 1) 
to[out=-80, in =80]  ++ (0,-1) to[out = 260, in = 0] ++(-.3, -.5)
to[out = 0 , in = 110] (0, -1)
 node[right]{$E_1$};
\draw[GG] (.1, .9) to[out = -150, in = 150] node[left]{$G$} ++ (0, -.8);
   \end{tikzpicture}  \end{minipage}
   &
  \begin{minipage}[c]{\Breite}
 \centering \begin{tikzpicture}
\draw[E1] 
(0, -1) node[right]{$\tilde E_1=\bar E_1$}
to[out = 110, in = 0] ++(-.3, .5)
to[out = 0 , in = 260] ++(.3, .5)
to[out = 80, in = 135] ++ (0.3,.5) to [out = -45, in = 35] ++ (-.5, -.5);

   \end{tikzpicture}  \end{minipage}
   
&
  \begin{minipage}[c]{\Breite}
 \centering \begin{tikzpicture}
\draw[E1] 
(0, -1) node[left] { $L$}
to
 ++(0,2);
\draw[branch] 
(0, .9) node[right] {$1$}
(.1, 1) 
to[out = 225, in = 90] ++ (-.2, -.4) 
to[out = -90, in = 90] ++ (.1, -.4) node[right] {$2$}
to[out = -90, in = 90] ++ (-.2, -.4) node[left] {$\Delta$}
to[out = -90, in = 90 ] ++ (.2, -.4) node[right] {$3$}
to[out = -90, in = 170] ++ (.4, -.4);
   \end{tikzpicture}  \end{minipage}
   
&$18$   
\\ 
 & 
  \begin{minipage}[c]{\Breite}
 \centering \begin{tikzpicture}
\draw[E1] (0, 1) 
to[out=-80, in =80]  ++ (0,-1) to[out = 260, in = 0] ++(-.3, -.5)
to[out = 0 , in = 110] (0, -1)
;
\draw[GG] (.27, .9)
to[out = -150, in = 150] 
++ (0, -.8);
   \end{tikzpicture}  \end{minipage}
   &
  \begin{minipage}[c]{\Breite}
 \centering \begin{tikzpicture}
\draw[E1] 
 (0, -1)
to[out = 110, in = 0] ++(-.3, .5)
to[out = 0 , in = 0] ++(0, 1)
to[out = 0, in = -115] ++ (0.3,.5);
   \end{tikzpicture}  \end{minipage}
  
&
  \begin{minipage}[c]{\Breite}
 \centering \begin{tikzpicture}
\draw[E1] (0, -1) to
++(0,2);
\draw[branch] 
(.2, 1)
to[out = 225, in = 90] ++ (-.2, -.5) node[right] {$3$}
to[out = -90, in = 90] ++ (-.4, -.5) 
to[out = -90, in = 90 ] ++ (.4, -.5) node[right] {$3$}
to[out = -90, in = 170] ++ (.4, -.5);
   \end{tikzpicture}  \end{minipage}
&$17$   
\\
& 
  \begin{minipage}[c]{\Breite}
 \centering \begin{tikzpicture}
\draw[E1] (0, 1) 
to[out=-80, in =0]  ++ (-.4,-1)
to[out = 0, in = 80] ++(.4, -1);
\draw[GG] (-.4, .5) 
to  ++ (0, -1);
   \end{tikzpicture}  \end{minipage}
   &
  \begin{minipage}[c]{\Breite}
 \centering \begin{tikzpicture}
\draw[E1] (0, 1) 
to[out=-60, in =0]  ++ (-.5,-1)
to[out = 0, in = 60] ++(.5, -1)
;
   \end{tikzpicture}  \end{minipage}
 &
  \begin{minipage}[c]{\Breite}
 \centering \begin{tikzpicture}
\draw[E1] (0, -1) to
++(0,2);
\draw[branch] 
(.4, -1)
to[out = 160, in = -90] ++ (-.4, .75) node[right] {$5$}
to[out = 90, in = -90] ++ (-.4, .75)
to[out = 90, in = 225 ] ++ (.6,.5)
(0, .82) node[right] {$1$};
   \end{tikzpicture}  \end{minipage}
&$17$   
\\ \midrule

$W_{12}$
&
  \begin{minipage}[c]{\Breite}
 \centering \begin{tikzpicture}
\draw[E1] (-.5, 1) to[out = -45, in =90] ++(.5,-1) to[out = -90, in =45] ++(-.5,-1) node[left] {$E_1$};
\draw[E1] (.5, 1) to[out = -135, in =90] ++(-.5,-1) to[out = -90, in =135] ++(.5,-1) node[right] {$E_2$};
\draw[GG] (-.5, .75) to ++ (1,0) node [below]{$G$};
   \end{tikzpicture}  
   \end{minipage}
&

  \begin{minipage}[c]{\Breite}
 \centering \begin{tikzpicture}
\draw[E1] (-.5, 1)  to[out = -80, in =150] ++ (.5, -.5) to[out = -30, in =90] ++(0,-1) to[out = -90, in =150] ++(.5,-.5);
\draw[E1] (.5, 1)  to[out = -110, in =30] ++ (-.5, -.5) to[out = -150, in =90] ++(0,-1) to[out = -90, in =30] ++(-.5,-.5);
   \end{tikzpicture}  
   \end{minipage}
 &
   \begin{minipage}[c]{\Breite}
 \centering \begin{tikzpicture}
\draw[E1] 
(0, -1) node[right] {$L$}
to
++(0,2);
\draw[branch] 
(-.3, 1)
to[out = -30, in = 90] ++ (.3, -.4) node[right] {$2$}
to[out = -90, in = 90] ++ (-.4, -.4) node[left] {$\Delta$}
to[out = -90, in = 90 ] ++ (.4, -.7) node[right] {$4$}
to[out = -90, in = 30] ++ (-.2, -.5);
   \end{tikzpicture} 
   \end{minipage}
&$17$   
   \\

&
  \begin{minipage}[c]{\Breite}
 \centering \begin{tikzpicture}
\draw[E1] (-.5, 1) to[out = -45, in =90] ++(.5,-1) to[out = -90, in =45] ++(-.5,-1);
\draw[E1] (.5, 1) to[out = -135, in =90] ++(-.5,-1) to[out = -90, in =135] ++(.5,-1);
\draw[GG] (-.5, 0) to ++ (1,0) 
;
   \end{tikzpicture}  
   \end{minipage}
  
 &  
  \begin{minipage}[c]{\Breite}
 \centering \begin{tikzpicture}
\draw[E1] (-.75, .75) to[out = -30, in =90] ++(.75,-.75) to[out = -90, in =30] ++(-.75,-.75);
\draw[E1] (.75, .75) to[out = -150, in =90] ++(-.75,-.75) to[out = -90, in =150] ++(.75,-.75);
   \end{tikzpicture}  
   \end{minipage}
   &
   \begin{minipage}[c]{\Breite}
 \centering \begin{tikzpicture}
\draw[E1] (0, -1) to
++(0,2);
\draw[branch] 
(-.3, 1)
to[in = 90, out = -60] ++ (.3, -1) node[right] {$6$}
to[out = -90, in = 60] ++ (-.3, -1);
   \end{tikzpicture} 
   \end{minipage}

&$16$     
   \\ \bottomrule
\end{tabular}
\end{table}

\begin{table}
 \caption{Curve configurations for the constructions of Type $Z_{12}$, $Z_{13}$ and $W_{13}$}
  \label{tab: with Sbar}
   \setlength{\Breite}{2.25cm}
 \begin{tabular}{cccccc}
 \toprule
  Type & in $X$  & in $S$ &  in $\bar S$ 
  & branch in $\IP^2$ & \begin{minipage}{2cm}
      $\dim \IL(\Delta) -$ \\ $ \dim \Aut(\IP^2)$
  \end{minipage}
  \\   \midrule

$Z_{12}$ & 
  \begin{minipage}[c]{\Breite}
 \centering \begin{tikzpicture}
\draw[GG] (-.157, 1) node[left]{$G$} to[out = -30, in = 30]  ++ (0, -.8);
\draw[E1] (0, 1) to ++(0, -2)
node[left] {$E_1$};
\draw[Ei] (.5, .6) to[out = -135, in =90] ++(-.5,-.8) to[out = -90, in =135] ++(.5,-.8) node[right] {$E_2$};
   \end{tikzpicture}  \end{minipage}
   &
  \begin{minipage}[c]{\Breite}
 \centering \begin{tikzpicture}
\draw[E1] (0, -1) node[left] {$\tilde E_1$}
-- ++(0,1.5)
to[out = 90, in = 135] ++ (0.3,.5)
to [out = -45, in = 35] ++ (-.5, -.5);
\draw[Ei] (.5, .6)
to[out = -135, in =90] ++(-.5,-.8) to[out = -90, in =135] ++(.5,-.8)node[right] {$\tilde E_2$} ;
   \end{tikzpicture}  \end{minipage}
&
  \begin{minipage}[c]{\Breite}
 \centering \begin{tikzpicture}
\draw[E1] 
(0, -1) node[right]{$\bar E_1$}
to[out = 110, in = 0] ++(-.3, .5)
to[out = 0 , in = 260] ++(.3, .5)
to[out = 80, in = 135] ++ (0.3,.5) to [out = -45, in = 35] ++ (-.5, -.5);
 \fill[sing] (-.3, -.5) node[above left]{$A_1$} circle (2pt);

   \end{tikzpicture}  \end{minipage}
&
  \begin{minipage}[c]{\Breite}
 \centering \begin{tikzpicture}
\draw[E1] 
(0, -1) 
node[right] { $L$}
to ++(0,2);

\draw[branch] 
(0, .9) node[right] {$1$}
(.1, 1)
to[out = 225, in = 90] ++ (-.2, -.4)
to[out = -90, in = 90] ++ (.1, -.4)  node[right] {$2$}
to[out = -90, in = 90] ++ (-.2, -.4) node[left] {$\Delta$}
to[out = -90, in = 90 ] ++ (.2, -.4)  node[above right] {$3$}
to[out = -90, in = 180, loop] ++ (0, 0) to[out = 0] ++ (.4, -.2);
   \end{tikzpicture}  \end{minipage}
   
&$17$
\\
& 
  \begin{minipage}[c]{\Breite}
 \centering \begin{tikzpicture}
\draw[GG] (-.21, 1)
to[out = -30, in = 30]  ++ (0, -.8);
\draw[E1] (0, 1) to ++(0, -2);
\draw[Ei] (.5, .6) to[out = -135, in =90] ++(-.5,-.8) to[out = -90, in =135] ++(.5,-.8);
   \end{tikzpicture}  \end{minipage}
   &
  \begin{minipage}[c]{\Breite}
 \centering \begin{tikzpicture}
\draw[E1] (0, -1) -- ++(0,1.75) to[out = -90, in = 10] ++ (-0.3,-.4);
\draw[Ei] (.5, .6) to[out = -135, in =90] ++(-.5,-.8) to[out = -90, in =135] ++(.5,-.8);
   \end{tikzpicture}  \end{minipage}
&
  \begin{minipage}[c]{\Breite}
 \centering \begin{tikzpicture}
\draw[E1] 
 (0, -1)
to[out = 110, in = 0] ++(-.3, .5)
to[out = 0 , in = 0] ++(0, 1)
to[out = 0, in = -115] ++ (0.3,.5);
 \fill[sing] (-.3, -.5) node[above left]{$A_1$} circle (2pt);
   \end{tikzpicture} 
   \end{minipage}
  &
    \begin{minipage}[c]{\Breite}
 \centering \begin{tikzpicture}
\draw[E1] (0, -1) to
++(0,2);
\draw[branch] 
(.2, 1)
to[out = 225, in = 90] ++ (-.2, -.5)  node[right] {$3$}
to[out = -90, in = 90] ++ (-.4, -.5)
to[out = -90, in = 90 ] ++ (.4, -.5)  node[above right] {$3$}
to[out = -90, in = 180, loop] ++ (0, 0) to[out = 0] ++ (.4, -.2);
   \end{tikzpicture}  \end{minipage}
&$16$
\\ \midrule
$Z_{13}$ 
 & 
  \begin{minipage}[c]{\Breite}
 \centering \begin{tikzpicture}
\draw[Ei] (-.4, 0)  to ++(.8,-.9) node[right] {$E_3$};
\draw[Ei] (-.4, -.9) 
node[left] {$E_2$}
to ++(.8,.9);

 \draw[GG] (-.157, 1) node[left]{$G$} to[out = -30, in = 30]  ++ (0, -.8);
\draw[E1] (0, 1)
node[right] {$E_1$}
to ++(0, -2);
   \end{tikzpicture}  \end{minipage}
   &
  \begin{minipage}[c]{\Breite}
 \centering \begin{tikzpicture}
\draw[E1] (0, -1) -- ++(0,1.5) to[out = 90, in = 135] ++ (0.3,.5) to [out = -45, in = 35] ++ (-.5, -.5) node[above] {$\tilde E_1$};
\draw[Ei] (-.4, 0) to ++(.8,-.9) node[right] {$\tilde E_3$};
\draw[Ei] (-.4, -.9) node[left] {$\tilde E_2$} to ++(.8,.9);
   \end{tikzpicture}  \end{minipage}
&
  \begin{minipage}[c]{\Breite}
 \centering \begin{tikzpicture}
\draw[E1] 
(0, -1) node[right]{$\bar E_1$}
to[out = 110, in = 0] ++(-.3, .5)
to[out = 0 , in = 260] ++(.3, .5)
to[out = 80, in = 135] ++ (0.3,.5) to [out = -45, in = 35] ++ (-.5, -.5);
 \fill[sing] (-.3, -.5) node[above left]{$A_2$} circle (2pt);

   \end{tikzpicture}  \end{minipage}
   &
  \begin{minipage}[c]{\Breite}
 \centering \begin{tikzpicture}
\draw[E1] (0, -1) node[right] {L}
to
++(0,2) ;
\draw[branch] 
(0, .9)  node[right] {$1$}
(.1, 1)
to[out = 225, in = 90] ++ (-.2, -.4) 
to[out = -90, in = 90] ++ (.1, -.4)  node[right] {$2$}
to[out = -90, in = 90] ++ (-.2, -.4) node[left] {$\Delta$}
to[out = -90, in = 90 ] ++ (.2, -.4)  node[right] {$3$}
to[out = 90] ++ (.4, .2);
   \end{tikzpicture}  \end{minipage}
  
&$16$
\\
& 
  \begin{minipage}[c]{\Breite}
 \centering \begin{tikzpicture}
\draw[Ei] (-.4, 0) to ++(.8,-.9);
\draw[Ei] (-.4, -.9) to ++(.8,.9);

 \draw[GG] (-.21, 1)
 to[out = -30, in = 30]  ++ (0, -.8);
\draw[E1] (0, 1) to ++(0, -2);
   \end{tikzpicture}  \end{minipage}
   &
  \begin{minipage}[c]{\Breite}
 \centering \begin{tikzpicture}
\draw[E1] (0, -1) -- ++(0,1.75) to[out = -90, in = 10] ++ (-0.3,-.4);
\draw[Ei] (-.4, 0) to ++(.8,-.9);
\draw[Ei] (-.4, -.9) to ++(.8,.9);

   \end{tikzpicture}  \end{minipage}
&
  \begin{minipage}[c]{\Breite}
 \centering \begin{tikzpicture}
\draw[E1] 
 (0, -1)
to[out = 110, in = 0] ++(-.3, .5)
to[out = 0 , in = 0] ++(0, 1)
to[out = 0, in = -115] ++ (0.3,.5);
 \fill[sing] (-.3, -.5) node[above left]{$A_2$} circle (2pt);
   \end{tikzpicture} 
   \end{minipage}
   &
       \begin{minipage}[c]{\Breite}
 \centering \begin{tikzpicture}
\draw[E1] (0, -1) to
++(0,2);
\draw[branch] 
(.2, 1)
to[out = 225, in = 90] ++ (-.2, -.5)  node[right] {$3$}
to[out = -90, in = 90] ++ (-.4, -.5)
to[out = -90, in = 90 ] ++ (.4, -.5) node[right] {$3$}
to[out = 90] ++ (.4, .2);

\end{tikzpicture}  \end{minipage}
&$15$

\\
\midrule
$W_{13}$
&
  \begin{minipage}[c]{\Breite}
 \centering 
 \begin{tikzpicture}[scale = .5]
\draw[E1] (0:2) to (180:2) node[above] {$E_1$};
\draw[E1] (60:2.3) to (240:2.3) node[left] {$E_2$};
\draw[Ei] (120:2.3) to (300:2.3) node[right] {$E_3$};
\draw[GG] ( 0, 1.2) to ++ (-60:2) node [right]{$G$};
   \end{tikzpicture}  
   \end{minipage}
&
  \begin{minipage}[c]{\Breite}
 \centering 
 \begin{tikzpicture}[scale = .5]
\draw[E1] (180:2) node[above] {$\tilde E_1$}
to (0,0) to[out = 0, in = -90]  (40:2);
\draw[E1] (240:2)node[left] {$ \tilde E_2$} to (0,0) to[out = 60, in = 150]  (20:2);
\draw[Ei] (120:2.3) to (300:2.3)  node[right] {$\tilde E_3$};
   \end{tikzpicture}  
   \end{minipage}
& 
\begin{minipage}[c]{\Breite}
 \centering \begin{tikzpicture}
\draw[E1] (-.5, 1)  to[out = -80, in =150] ++ (.5, -.5) to[out = -30, in =90] ++(0,-1) to[out = -90, in =150] ++(.5,-.5) node[right] {$\bar E_2$};
\draw[E1] (.5, 1)  to[out = -110, in =30] ++ (-.5, -.5) to[out = -150, in =90] ++(0,-1) to[out = -90, in =30] ++(-.5,-.5)node[left] {$\bar E_1$};
 \fill[sing] (0, -.5) node[above left]{$A_1$} circle (2pt);
   \end{tikzpicture}  
   \end{minipage}
 &
   \begin{minipage}[c]{\Breite}
 \centering \begin{tikzpicture}
\draw[E1] (0, -1) node[right] {L} to
++(0,2);
\draw[branch] 
(-.3, 1)
to[out = -30, in = 90] ++ (.3, -.4)  node[right] {$2$}
to[out = -90, in = 90] ++ (-.4, -.4)  node[left] {$\Delta$}
to[out = -90, in = 90 ] ++ (.4, -.7)  node[above right] {$3$}
to[out = -90, in = 0, loop] ++ (0,0) 
to[out = 180, in = 45] ++(-.3, -.2);
   \end{tikzpicture} 
   \end{minipage}
 &$16$  

\\
\bottomrule 
 \end{tabular}
\end{table}

\begin{lem}\label{double cover} In the case of singularities of type $Z_{11}, Z_{12}, Z_{13},W_{12}$, and $W_{13}$, the surface $X$ is birational to a  $K3$ surface with ADE singularities which is the double cover of $\IP^2$ branched along a plane sextic $\Delta$.
\end{lem}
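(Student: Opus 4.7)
The plan is to combine Lemma \ref{K3} with Saint-Donat's classical theorem on K3 surfaces of degree two. By Lemma \ref{K3}, $f \colon X \to S$ is a single blow-up of a smooth K3 surface with $(-1)$-exceptional curve $G$. Since $K_S \sim 0$, one has $K_X = G$, and adjunction on $X$ gives
\[ G \cdot E_i \,=\, K_X \cdot E_i \,=\, -2 - E_i^2.\]
Inspecting Table \ref{tab: info Z and W} shows that $G$ meets exactly the non-$(-2)$-components of $E$, so the $(-2)$-components of $E$ descend isomorphically to a disjoint union of ADE chains of $(-2)$-curves on $S$. I would then let $\bar f \colon S \to \bar S$ be the contraction of these chains, obtaining a projective K3 surface with at most one rational double point of the ADE type listed in Tables \ref{tab: without Sbar} and \ref{tab: with Sbar}.

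For the polarising divisor, set $\hat E = f_* E$. Using $f^* \hat E = E + (E \cdot G) G$ together with $E \cdot G = K_X \cdot E = 2$, one computes
\[ \hat E^2 \,=\, E^2 + (E \cdot G)^2 \,=\, -2 + 4 \,=\, 2.\]
Moreover $\hat E = f_*(\phi^* K_W - K_X) = f_* \phi^* K_W$ is the pushforward under a birational contraction of the pullback of an ample class, hence nef on $S$. In particular $\hat E \cdot N = 0$ for every $(-2)$-curve $N$ contracted by $\bar f$ (a fact one can verify directly from the intersection data of the five configurations), so $\hat E = \bar f^* \bar E$ for a well-defined Weil divisor $\bar E$ on $\bar S$, and $\bar E$ is nef with $\bar E^2 = 2$.

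Finally, I would invoke Saint-Donat's theorem for nef divisors of self-intersection two on K3 surfaces with ADE singularities: provided $|\bar E|$ has no fixed component and is not composed with a pencil, it defines a finite morphism $\bar \theta \colon \bar S \to \IP^2$ of degree two. The ramification formula $K_{\bar S} = \bar \theta^*(K_{\IP^2} + \frac{1}{2} \Delta) = 0$ then forces the branch divisor $\Delta$ to be a plane sextic, yielding the diagram of Figure \ref{fig: Z and W}.

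The main obstacle will be excluding the two exceptional possibilities in Saint-Donat's theorem. Absence of a fixed component follows because $\bar E$ is the image of the connected exceptional divisor $E$, which contracts to a single point on $W$, so any candidate fixed curve would pair non-negatively with the nef class $\bar E$ and the only way this can force vanishing is if the fixed part lies in the ADE locus, which is already contracted by $\bar f$. Ruling out that $|\bar E|$ is composed with a pencil reduces to excluding a decomposition $\bar E \sim 2F$ with $F$ a fibre class of an elliptic fibration on $\bar S$, and the rigidity of the numerical data encoded in Tables \ref{tab: without Sbar} and \ref{tab: with Sbar} permits a quick case-by-case verification.
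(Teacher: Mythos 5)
Your overall strategy coincides with the paper's: contract the $(-1)$-curve $G$ to land on the K3 surface $S$, compute $\hat E^2=2$ and $E\cdot G=2$, use the linear system $|\hat E|$ to produce a degree-two morphism to $\IP^2$ that factors through the contraction $\bar f$ of the smooth rational $(-2)$-components, and read off the branch sextic. Where you differ is in the two technical verifications, and both of your choices are legitimate alternatives: for the branch degree you use the global ramification formula $K_{\bar S}=\bar\theta^*(K_{\IP^2}+\tfrac12\Delta)=0$, which is cleaner than the paper's Riemann--Hurwitz computation on the image line $L$ of $\bar E$; and for the structure of $|\hat E|$ you invoke Saint-Donat, whereas the paper computes $h^0(\ko_S(\hat E))=3$ by Riemann--Roch and rules out a fixed part by showing the mobile part $M$ must satisfy $M^2=2$ and then comparing with the configurations. (Incidentally, the ``composed with a pencil'' alternative cannot occur for a nef class with $D^2=2>0$ and no fixed part, so that worry is vacuous.)

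Two points need repair. First, the adjunction formula $G\cdot E_i=-2-E_i^2$ presupposes that every component $E_i$ is a smooth rational curve; for type $Z_{11}$ the fundamental cycle is a single cuspidal curve with $p_a(E_1)=1$ and $E_1^2=-2$, so $G\cdot E_1=2p_a(E_1)-2-E_1^2=2$, not $0$. The correct statement, which is what both you and the paper actually need, is that $G$ is disjoint from the \emph{smooth rational} $(-2)$-components (otherwise they would become $(-1)$-curves on the minimal surface $S$) and must meet the components of larger arithmetic genus or more negative square. Second, your argument excluding a fixed component is not yet a proof: the sentence about curves ``pairing non-negatively with the nef class'' does not identify what is being forced to vanish. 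The clean version is: writing $|\hat E|=N+|M|$ with $N$ the fixed part, one has $h^0(M)=h^0(\hat E)=3$, hence $M^2=2$ since $|M|$ is base-point free on a K3; then $2=\hat E^2=M^2+2M\cdot N+N^2$ forces $\hat E\cdot N=M\cdot N+N^2\le 0$, so $\hat E\cdot N=0$ by nefness, and a check against the five configurations shows $N=0$. This is exactly the case-by-case verification the paper performs, so you cannot avoid it; as written your proposal defers it to ``rigidity of the numerical data'' without carrying it out.
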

\begin{proof}
By Lemma \ref{K3} the map $f\colon X\to S$ is one blow up. Let $G$ be the $(-1)$-curve contracted on $X$.
Recall that
\[G = f^*K_S+G= K_X=\varphi^*K_W-E.\]
Intersecting with $E$ we get $G.E = -E^2 = 2$. 
Therefore, after blowing down $G$, we obtain that $p_a(\tilde{E})=2$, and $\tilde{E}^2=2$, where $\tilde{E}=f_*E\subset S$ is the image of $E$. So, by Riemann-Roch we obtain that $h^0(\ko_S(\tilde{E}))=3$. In Table \ref{tab: without Sbar} and Table \ref{tab: with Sbar} we list the possible configurations of $\tilde{E}$.

Let us write $|\tilde{E}|=N+|M|$, where $N$ is the fixed part, and $|M|$ is the  mobile part of the linear system $|\tilde{E}|$. Since we have that $|M|$ does not have fixed part it is nef. Since  $S$ is a $K3$ surface, we obtain that $h^0(\ko_S(M))=h^0(\ko_S(\tilde{E}))=3$, and then $M^2=2$ and $M$ is big as well. By  \cite[Corollary 3.15, Chapter 2]{huybrechts2016lectures}, the fixed part is empty, the linear system  $|\tilde{E}|$ is base point free and the induced map  map $\psi\colon S\to \IP^2$  is a morphism of degree two.
The branch locus $\Delta$ is a plane sextic, since $K_{\bar S} = \bar\psi^*(K_{\IP^1}+1/2\Delta)$ is trivial.

In addition, analysing case by case in Table \ref{tab: without Sbar} and Table \ref{tab: with Sbar}, the sub-configuration formed by the $(-2)$-curves and $\tilde{E}$ intersect trivially. That is, all the $(-2)$-curves in $\tilde{E}$ are contracted by $\psi\colon S\to \IP^2$. So, we can factor the map $\psi=\bar{\psi}\circ\bar{f}$, where $\bar{f}\colon S\to \bar{S}$ is the map which contracts the $(-2)$-curves in $S$, and $\bar{\psi}\colon \bar{S}\to \IP^2$ is the double cover associated to the linear system $\bar{E}=\bar{f}_*(\tilde{E})$. The possible configurations in $\bar S$ with the singularities of the surfaces marked are also given in Table \ref{tab: without Sbar} and Table \ref{tab: with Sbar}. 
In particular, we have that $X$ is birational to the a $K3$ surface with ADE singularities.
\end{proof}
\begin{lem} A plane sextic $C$ is projectively equivalent to $\Delta$ if and only if
\begin{prooflist}
    \item[Type  $Z_{11}$] $C$ is smooth, and there exists a line $L$ such that either $C|_L=3p_1+2p_2+p_3$, $C|_L=3p_1+3p_2$, or $C|_L=5p_1+p_2$. 
    \item[Type  $W_{12}$] $C$ is smooth, and there exists a line $L$ such that either $C|_L=4p_1+2p_2$ or $C|_L=6p_1$.
    \item[Type  $W_{13}$] $C$ is smooth except for  an $A_1$ singularity at a point $p_1$, and there exists a line $L$ such that $C|_L=4p_1+2p_2$.
    \item[Type  $Z_{12}$] $C$ is smooth except for  an $A_1$ singularity at a point $p_1$, and there exists a line $L$ such that either $C|_L=3p_1+2p_2+p_3$ or $C|_L=3p_1+3p_2$.
    \item[Type  $Z_{13}$] $C$ is smooth except for  an $A_2$ singularity at a point $p_1$, and there exists a line $L$ such that either $C|_L=3p_1+2p_2+p_3$ or $C|_L=3p_1+3p_2$.
    
\end{prooflist} 
Moreover for any fixed type, let $\IL(\Delta)$ be the locus parametrising the plane sextics  with the given properties. Then $\IL(\Delta)$ is an open subset of a linear system, hence irreducible, and  $\dim (\IL(\Delta)) - \dim \Aut(\IP^2)$ is as in Table \ref{tab: without Sbar} and Table \ref{tab: with Sbar}.
\end{lem}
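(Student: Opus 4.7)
The plan is to establish the characterization by unravelling the double cover construction of Lemma \ref{double cover} in both directions, and then to carry out a case-by-case parameter count for the dimension formula.

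For the forward direction, I would start from the surface $\bar S$ of Lemma \ref{double cover} and use the standard ADE correspondence for double covers: the local model $z^2 = x^2 + y^{k+1}$ shows that an $A_k$ of the branch produces an $A_k$ on the cover. Reading off from Table \ref{tab: without Sbar} and Table \ref{tab: with Sbar} which singularities $\bar S$ carries in each case then gives the stated singularities of $\Delta$. For the intersection profile with $L$, I would use $L = \bar\psi(\bar E)$ and observe that the restricted double cover $\bar E \to L \cong \IP^1$ is controlled by $\Delta|_L$: a smooth intersection of multiplicity $m$ produces the local model $z^2 = t^m$ on $\bar E$ (an $A_{m-1}$ singularity for $m \geq 2$, a smooth branch point for $m=1$), while a singularity of $\Delta$ lying on $L$ contributes the corresponding combined local model. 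Matching these local models with the shapes of $\bar E$ drawn in the $\bar S$-columns of the tables then forces the listed profiles of $C|_L$.

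The converse direction reverses the construction. Given a sextic $C$ with the stated properties, I would take the double cover $\bar S' \to \IP^2$ branched over $C$; the ADE correspondence guarantees that $\bar S'$ is a K3 surface with precisely the prescribed singularities, and the prescribed contact profile with $L$ ensures that the preimage of $L$ has the correct shape to play the role of $\bar E$. Resolving, blowing up the point on the resolved $\hat E'$ dictated by the construction, and contracting the curve configuration corresponding to $E$ would then reverse the diagram of Figure \ref{fig: Z and W} and produce a Gorenstein surface $W'$ with a unique singular point of the required type. Ampleness of $K_{W'}$ and the absence of additional singular points are open conditions which hold for $C$ sufficiently general in its linear system.

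For the dimension count I would work in the incidence variety of tuples $(C, L, p_1, p_2, \dots)$ satisfying all the stated incidence relations and argue that projection to $C$ is generically finite (since $L$ and the marked points are determined by $C$), so that $\dim |\Delta|$ equals the dimension of this incidence variety. That dimension is $27$ (all plane sextics) plus the parameter dimension of $(L, p_i)$, minus the codimensions imposed by the singularity and contact conditions: an $A_k$-singularity at a fixed point of $C$ contributes codimension $k+2$ and automatically gives $(C.L)_p \geq k+1$, while each additional unit of contact beyond this contributes codimension $1$; a contact $(C.L)_p = m$ at a smooth point of $C$ contributes codimension $m$. Summing in each of the eight cases and subtracting $\dim \Aut(\IP^2) = 8$ yields the values displayed in the tables. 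The main obstacle will be the careful book-keeping for the cases where $\bar S$ has non-trivial ADE singularities (types $Z_{12}$, $Z_{13}$, $W_{13}$): one must trace the blow-down $S \to \bar S$ of the exceptional $(-2)$-configurations through the double cover and match the resulting local models to the tabulated shape of $\bar E$, keeping everything consistent both at the singularity of $\Delta$ and at every contact point with $L$.
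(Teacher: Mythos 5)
Your overall strategy for the characterization (unravelling the double cover $\bar\psi\colon\bar S\to\IP^2$, matching multiplicities of $\Delta|_L$ with the local models $z^2=t^m$ on $\bar E$) is the same as the paper's, and your dimension count via an incidence variety of tuples $(C,L,p_1,p_2,\dots)$ is a legitimate alternative to the paper's method of writing explicit normal forms $\Delta = g_6(x,y)+zf_5(x,y,z)$ and counting coefficients. However, your codimension recipe contains an error that produces wrong numbers in the $Z_{13}$ cases. You assert that an $A_k$-singularity of $C$ at $p_1$ ``automatically gives $(C.L)_{p_1}\geq k+1$.'' This is true only for $k=1$: a node meets every line through it with multiplicity at least $2$. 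For an $A_2$-cusp $y^2=x^3$, a general line through the origin has intersection multiplicity $2$, and multiplicity $3$ is achieved only by the cuspidal tangent $y=0$; so requiring $(C.L)_{p_1}=3$ at the cusp costs one additional condition (that $L$ be the cuspidal tangent) which your bookkeeping omits. With your recipe the two $Z_{13}$ rows come out as $17$ and $16$ instead of the tabulated $16$ and $15$. The paper sees the same extra condition from the normal-form side: for $Z_{13}$ it must additionally kill the coefficient of $yx^4$ in $f_5$ to upgrade the singularity at $[1:0:0]$ from $A_1$ to $A_2$, which is exactly the one missing codimension.

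A second, smaller gap: in the forward direction you ``read off'' the possible contact profiles from Tables \ref{tab: without Sbar} and \ref{tab: with Sbar}, but the enumeration of those configurations is itself the substantive step. It rests on constraining how the unique $(-1)$-curve $G\subset X$ (with $G.E=2$) can meet the components of $E$: on the minimal K3 surface $S$ no rational curve has self-intersection $<-2$, so $G$ must meet every component of $E$ of self-intersection $<-2$, and $G$ cannot meet any $(-2)$-component of $E$ (it would become a $(-1)$-curve on $S$). These two observations are what force the short lists of profiles $C|_L$ in each case; without them your argument only shows that the listed profiles are \emph{possible}, not that they are the only ones.
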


\begin{proof}
Let us first recall that $\tilde{E}$ is the image of $E$ after contracting the unique $(-1)$-curve $G$ on $X$, and $\bar{E}$ is its image after contracting the $(-2)$-curves on $\tilde{E}$, where $E$ is as in Table \ref{tab: info Z and W} for each case. Thus, we have $\tilde{E}=\bar{E}$ for the type $Z_{11}$, and $W_{12}$. In the following, we will analyze the geometry of $\bar{E}$ and $\Delta$ case by case.  As we saw in the proof of Lemma \ref{double cover}, we have that $E. G=2$, and so since $S$ is a $K3$ surface we have two consequences 
\begin{itemize}
    \item $G$ cannot intersect $(-2)$-curves on $E$. Otherwise, they would become $(-1)$-curves on $S$, but $S$ is minimal.

    \item $G$ must intersect the negative curves of $E$ with self-intersection less than $-2$. On the contrary, we would have rational curves with self-intersection less than $-2$ on a $K3$ surface, which is impossible.
\end{itemize}

Hence, we have that $G$ might intersect $E$ as in Table \ref{tab: without Sbar}, and Table \ref{tab: with Sbar}.
Let $[x:y:z]$ be the coordinates of $\IP^2$. We will denote by $f_5(x,y,z)$ a general polynomial of degree five in $\IP^2$.  Keeping the notation from Lemma \ref{double cover}, let $L$ be the image of $\bar{E}$ under the map $\bar{\psi}$.

As we saw in the proof of Lemma \ref{double cover}, we have that $p_a(\tilde E)=p_a(\bar{E})=2$, compare Table \ref{tab: without Sbar} and Table \ref{tab: with Sbar}. Thus, $L=\bar\varphi(\bar E)$ cannot be contained  in $\Delta$ because there are no curves of arithmetic genus two in $\IP^2$. Now, after a change of coordinates, we can assume that  $L=\{z=0\}$. 

\begin{description}[labelindent= 0cm, leftmargin=\parindent] 
\item[Type $Z_{11}$:] We treat the cases from Table \ref{tab: without Sbar}.
\begin{prooflist}[style = nextline, labelindent= 0cm,  leftmargin=\parindent]
\item[Case 1:  The curve $G$ does not pass through the cusp point of $E$, and intersects two different points of $E$.] So, the curve $\bar{E}$ has singularity of type $A_2$ (cusp) and $A_1$ (node)(see Table \ref{tab: without Sbar}). Thus, $\Delta|_L$ consists of three points $p_1,p_2,p_3$ with multiplicity three, two and one respectively. By making a change of coordinates, we may assume that $p_1=[1:0:0]$, and that $p_3=[1:1:0]$. Thus, we have $\Delta|_{L}=3[1:0:0]+2[1:\lambda:0]+[1:1:0]$, where $\lambda\neq 0,1$. So, we can write
\begin{equation*}
    \Delta=\{y^3(\lambda x-y)^2(x-y)+zf_5(x,y,z)=0\}.
\end{equation*}
Any automorphism of $\IP^2$ can be represented by an invertible matrix
\[ \mat{ a_{11} & a_{12}& a_{13}\\ a_{21} & a_{22}&a_{23}\\ a_{31}& a_{32} &a_{33}},\]

and so, the automorphisms that fix the points $[1:0:0]$ and $[1:1:0]$ on $\IP^2$ are represented by an invertible matrix of the form

\[ \mat{ 1 & 0& a\\ 0 & b&c\\ 0& 0 &d},\] 
where $b$ and $d$ are nonzero. Conversely, every matrix of that form with $b$ and $d$ nonzero gives an automorphism of $\IP^2$ fixing the two points. Thus, we have that the dimension of the stabiliser of  the two points is $4$. Putting this together with the facts that $\lambda$ is a parameter, and $h^0(\IP^2,\ko(5))=21$, we compute that $\dim (\IL(\Delta)) - \dim \Aut(\IP^2)=1+21-4=18$.
\item[Case 2:  The curve $G$ does not pass through the cusp point of $E$, and is tangent to $E$.]
So, the curve $\Bar{E}$ has two singularities of type $A_2$ (cusp) (see Table \ref{tab: without Sbar}). Thus, $\Delta|_L$ consists of two points $p_1,p_2$ with multiplicity three. By making a change of coordinates, we may assume that $p_1=[1:0:0]$, and that $p_2=[1:1:0]$. That is, we have $\Delta|_{L}=3[1:0:0]+3[1:1:0]$. Then,
\begin{equation*}
    \Delta=\{y^3(x-y)^3+zf_5(x,y,z)=0\}.
\end{equation*}

\item[Case 3: The curve $G$ intersects $E$ at the cusp point $p$.]
Since $E.G=2$ their tangent cones do not intersect, that is, they are separated after one blow up. Then we obtain that $\Bar{E}$ has a unique singularity of type $A_{4}$. In this case, we have that $\Delta|_L$ consists of two points $p_1,p_2$ with multiplicity five and one respectively. By making a change of coordinates, we may assume that $p_1=[1:0:0]$, and that and that the sixth point in the intersection is $[1:1:0]$. Thus, we have $\Delta|_{L}=5[1:0:0]+[1:1:0]$. Then,
\begin{equation*}
    \Delta=\{y^5(x-y)+zf_5(x,y,z)=0\}.
\end{equation*}
\end{prooflist}
Due to the choice of coordinates, we observe that the families in the last two cases are obtained by taking $\lambda =1$, and $\lambda=0$ in the first family, respectively. Similar to Case 1, using that $\lambda$ is fixed, we compute that $\dim (\IL(\Delta)) - \dim \Aut(\IP^2)=21-4=17$ in Case 2 and Case 3.

 Observe that  $S=\bar{S}$ is smooth and so the branch $\Delta$ is smooth.

\item[Type $W_{12}$:] We treat the cases from Table \ref{tab: without Sbar}.
\begin{prooflist}[style = nextline, labelindent= 0cm,  leftmargin=\parindent]
    \item[Case 1: The curve $G$ intersects once $E_1$, and once $E_2$.]
That is, the curve $\bar{E}$ has a singularity of type $A_3$ and $A_1$. Thus, $\Delta|_L$ consists of two points $p_1,p_2$ with multiplicity four, and two respectively. By making a change of coordinates, we may assume that $p_1=[1:0:0]$. Thus, we have $\Delta|_{L}=4[1:0:0]+2[1:\lambda:0]$, where $\lambda\neq 0$. So, we can write
\begin{equation*}
    \Delta=\{y^4(y-\lambda x)^2+zf_5(x,y,z)=0\}.
\end{equation*}
By using the same argument as in the previous cases, we compute that $\dim (\IL(\Delta)) - \dim \Aut(\IP^2)=1+21-5=17$.
    
      \item[Case 2: The curve $G$ intersects the singularity of type $A_3$ in $E$.]  That is, the curve $\Bar{E}$ has a singularity of type $A_5$ and $\Delta|_L$ consists in one point $p_1$ with multiplicity six. By making a change of coordinates, we may assume that $p_1=[1:0:0]$. That is, we have $\Delta|_{L}=6[1:0:0]$, and so
     \begin{equation*}
    \Delta=\{y^6+zf_5(x,y,z)=0\}.
\end{equation*}
Again, we have that the family in the second case is obtained by making $\lambda=0$ in the first family. And, then we compute that $\dim (\IL(\Delta)) - \dim \Aut(\IP^2)=21-5=16$. 
\end{prooflist}
Observe that  $S=\bar{S}$ is smooth and so the branch $\Delta$ is smooth.

\item[Type $W_{13}$:] We treat the only case from Table \ref{tab: with Sbar}.
\begin{prooflist}[style = nextline, labelindent= 0cm,  leftmargin=\parindent]
    \item[Case 1: The curve $G$ intersects once the components $E_1$, and $E_2$.]
    That is, the curve $\tilde{E}\subset S$ has a singularity of type $A_3$ and $A_1$. After contracting the $(-2)$-curve on the configuration of $\tilde{E}$ we obtain that $\bar{S}$ has a surface singularity of type $A_1$. The curve $\bar{E}\subset \bar{S}$ has a singularity of type $A_3$ at the $A_1$-point in $\bar{S}$, and a singularity of type $A_1$ (nodal curve singularity). Thus, $\Delta$ has an $A_1$ singularity (surface singularity) at a point $p_1$, and $\Delta_L$ is the union of $p_1$ with multiplicity four, and another double point $p_2$.By making a change of coordinates, we may assume that $p_1=[1:0:0]$, and that $p_2=[0:1:0]$.
    So, we can write
\begin{equation*}
    \Delta=\{y^4x^2+zf_5(x,y,z)=0\}.
\end{equation*}
Now, in order to impose an $A_1$ singularity over the point $[1:0:0]$, we need $f_5(1,0,0)=0$. That implies that $f_5(x,y,z)$ cannot contain the term $x^5$. Note that since the general member on $\Delta$ has the terms $z^2x^4$, and $zyx^4$, then the general member  has the desired conditions. Moreover, since we fixed two points on $\IP^2$, and $f_5(x,y,z)$ cannot contain the term $x^5$, we compute that $\dim (\IL(\Delta)) - \dim \Aut(\IP^2)=21-1-4=16$.
\end{prooflist}
\item[Type $Z_{12}$:] We treat the cases from Table \ref{tab: with Sbar}.
\begin{prooflist}[style = nextline, labelindent= 0cm,  leftmargin=\parindent]
 \item[Case 1: The curve $G$ intersects two different points of $E_1$.]
 So, the curve $\tilde{E}\subset S$ has a singularity of type $A_3$, and a singularity of type $A_1$. After contracting the $(-2)$-curve on the configuration of $\tilde{E}$, we obtain that $\bar{S}$ has a surface singularity of type $A_1$. The curve $\bar{E}\subset \bar{S}$ has a singularity of type $A_3$ at the $A_1$-point in $\bar{S}$, and a singularity of type $A_1$ (curve singularity). Thus, $\Delta$ has an $A_1$ singularity (surface singularity) at a point $p_1$, and $\Delta|_L$ is the union of $p_1$ with multiplicity three, a double point $p_2$, and a point $p_3$ with multiplicity one. By making a change of coordinates, we can assume that $p_1=[1:0:0]$, and that $p_2=[0:1:0]$. Thus, we have $\Delta|_{L}=3[1:0:0]+2[0:1:0]+[\lambda:1:0]$, where $\lambda\neq 0$. So, we can write
\begin{equation*}
    \Delta=\{y^3x^2(x-\lambda y)+zf_5(x,y,z)=0\}.
\end{equation*}
In order to impose an $A_1$ singularity over the point $[1:0:0]$, we need $f_5(1,0,0)=0$. That implies that $f_5(x,y,z)$ cannot contain the term $x^5$. Note that since the general member on $\Delta$ has the terms $z^2x^4$, and $zyx^4$, then the general member  has the desired conditions. Since we fixed two points on $\IP^2$ and $\lambda$ is a parameter, we compute that $\dim (\IL(\Delta)) - \dim \Aut(\IP^2)=1+21-1-4=17$.

\item[Case 2: The curve $G$ is tangent to $E_1$.]
So, the curve $\tilde{E}\subset S$ has a singularity of type $A_3$, and $A_1$. After contracting the $(-2)$-curve on the configuration of $\tilde{E}$, we obtain that $\bar{S}$ has a surface singularity of type $A_1$. The curve $\bar{E}\subset \bar{S}$ has a singularity of type $A_3$ at the $A_1$-point in $\bar{S}$, and $A_3$ (curve singularity). Thus, $\Delta$ has an $A_1$ singularity (surface singularity) at a point $p_1$, and $\Delta|_L$ is the union of $p_1$ with multiplicity three, a point $p_2$ with multiplicity three. By making a change of coordinates, we may assume that $p_1=[1:0:0]$, and that $p_2=[0:1:0]$. That is, we have $\Delta|_{L}=3[1:0:0]+3[0:1:0]$. Then,
\begin{equation*}
    \Delta=\{y^3x^3+zf_5(x,y,z)=0\},
\end{equation*}
where $f_5(x,y,z)$ cannot contain the term $x^5$. Note that since the general member on $\Delta$ has the terms $z^2x^4$, and $zyx^4$, then the general member  has the desired conditions. We compute that $\dim (\IL(\Delta)) - \dim \Aut(\IP^2)=21-1-4=16$.
\end{prooflist}

\item[Type $Z_{13}$:] We treat the cases from Table \ref{tab: with Sbar}.
\begin{prooflist}[style = nextline, labelindent= 0cm,  leftmargin=\parindent]
    \item[Case 1: The curve $G$ intersects two different points of $E_1$.]
    So, the curve $\tilde{E}\subset S$ has a singularity of type $A_3$, and a singularity of type $A_1$. After contracting the $(-2)$-curves on the configuration of $\tilde{E}$, we obtain that $\bar{S}$ has a surface singularity of type $A_2$. The curve $\bar{E}\subset \bar{S}$ has a singularity of type $A_3$ at the $A_2$-point in $\bar{S}$, and $A_1$ (curve singularity). Thus, $\Delta$ has an $A_2$ singularity (surface singularity) at a point $p_1$, and $\Delta_L$ is the union of $p_1$ with multiplicity three, a double point $p_2$, and a point $p_3$ with multiplicity one. By making a change of coordinates, we may assume that $p_1=[1:0:0]$, and that $p_2=[0:1:0]$. Thus, we have $\Delta|_{L}=3[1:0:0]+2[0:1:0]+[\lambda:1:0]$ where $\lambda\neq 0$. So, we can write
\begin{equation*}
    \Delta=\{y^3x^2(x-\lambda y)+zf_5(x,y,z)=0\}.
\end{equation*}
In order to impose an $A_2$ singularity over the point $[1:0:0]$, we need $f_5(1,0,0)=0$. That implies that $f_5(x,y,z)$ cannot contain the term $x^5$. In this case, if both terms $zx^4$ and $yx^4$ have coefficients not zero at the same time in  $f_5(x,y,z)$, then we have that the general member on $\Delta$ would have an $A_1$ singularity as in the previous case. Hence, we can assume that $f_5$ does not contain the term $yx^4$, and so the general member on $\Delta$ has the terms $zyx^4$, and $z^3x^3$, then the general member  has the desired conditions. Since we fixed two points on $\IP^2$, $\lambda$ is a parameter, and $f_5(x,y,z)$ cannot contain the terms $x^5$ or $yx^4$, we compute that $\dim (\IL(\Delta)) - \dim \Aut(\IP^2)=1+21-2-4=16$.

\item[Case 2: The curve $G$ is tangent to $E_1$.]
So, the curve $\tilde{E}\subset E$ has a singularity of type $A_3$, and $A_1$. After contracting the $(-2)$-curves on the configuration of $\tilde{E}$, we obtain that $\bar{S}$ has a surface singularity of type $A_2$. The curve $\bar{E}\subset \bar{S}$ has a singularity of type $A_3$ at the $A_2$-point in $\bar{S}$, and $A_3$ (curve singularity). Thus, $\Delta$ has an $A_2$ singularity (surface singularity) at a point $p_1$, and $\Delta_L$ is the union of $p_1$ with multiplicity three, a point $p_2$ with multiplicity three. By making a change of coordinates, we may assume that $p_1=[1:0:0]$, and that $p_2=[0:1:0]$. That is, we have $\Delta|_{L}=3[1:0:0]+3[0:1:0]$. Then,
\begin{equation*}
    \Delta=\{y^3x^3+zf_5(x,y,z)=0\},
\end{equation*}
where $f_5(x,y,z)$ cannot contain the term $x^5$, and so we have that $\dim (\IL(\Delta)) - \dim \Aut(\IP^2)=21-2-4=15$.
\end{prooflist}
\end{description}
We have treated all cases and conclude the proof.\end{proof}


\section{Application to the moduli space of stable surfaces $\overline\gothM_{1,3}$}
\label{sect: application to moduli}
Let us spell out the consequences for the study of the stable compactification $\overline\gothM_{1,3}$,  the moduli space of stable I-surfaces.

Let us denote by 
\begin{equation}\label{eq: eight divisors} \kd(E_{11}),\, \kd(E_{12}),\, \kd(E_{13}),\, \kd(Z_{11}),\, \kd(Z_{12}),\, \kd(Z_{13}),\, \kd(W_{11}),\, \kd(W_{12})
 \end{equation}
the (closure of the) eight distinct divisors in the closure of the classical component constructed in \cite{GPSZ22}.
Recall that in their construction, an explicit equation was chosen for a branch divisor that contains one of the unimodal singularities, from which the stable replacement is constructed explicitly. In particular, they do not exclude the existence of other families, where the unimodal point on the branch divisor arises in a different way, compare \cite[Rem. 4.5]{GPSZ22}. 

\begin{thm}
 Let $\pi \colon \kx \to B$ be a flat family of surfaces such that for a point $0\in B$ the family over $B\setminus \{0\}$ is an admissible\footnote{For the moduli of stable surfaces, families have to satisfy an additional assumption beyond flatness, often called $\IQ$-Gorenstein, compare \cite{KollarModuli}. In our case, flatness is enough,  because the central fibre is Gorenstein and thus the family is Gorenstein close to the central fibre.}
 family of I-surfaces and $\kx_0:= W$ is a surface with a unique singular point, which is an exceptional unimodal double point. Then
 \begin{enumerate}
  \item The stable replacement $W^\text{st}$ of $W$ lies in one of the divisors in \eqref{eq: eight divisors}.
  \item If the singular point is of type $E_n$, then $W^\text{st}$ is birationally  the union of a minimally elliptic surface as described in Section \ref{sect: En}  and a K3 surface.
  \item If the singular point is of type $Z_n$ or $W_n$, then $W^\text{st}$ is birationally  the union of two K3 surfaces as described in \cite{GPSZ22}. 
 \end{enumerate}
\end{thm}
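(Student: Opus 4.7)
The plan is to combine our classification Theorems~\ref{thm: type E} and \ref{thm: Z and W} with the explicit stable replacement computation of \cite{GPSZ22}. Flatness of $\pi$ and the ampleness of $K_{\kx_b}$ for $b\neq 0$ ensure that $W$ is Gorenstein with $K_W$ ample and Cartier, $K_W^2=1$ and $\chi(\ko_W)=3$, whence by Proposition~\ref{prop: min res p_g=2} also $p_g(W)=2$ and $q(W)=0$. Thus $W$ satisfies the hypotheses of Theorem~\ref{thm: type E} (if the singularity is of type $E_n$) or Theorem~\ref{thm: Z and W} (if it is of type $Z_n$ or $W_n$), which identify $W$, possibly as a degeneration, with a member of the unique irreducible family of Gorenstein surfaces with the given exceptional unimodal singularity.

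To deduce (i), I would argue that by irreducibility and a parameter count this family coincides modulo closures with the explicit family of hypersurfaces in $\IP(1,1,2,5)$ studied in \cite{GPSZ22}. In loc.\ cit.\ the stable replacement of the general member is computed via a weighted blow-up, inducing a morphism from their parameter space to $\overline{\gothM}_{1,3}$ whose image is (the closure of) one of the eight divisors in \eqref{eq: eight divisors}. By properness and separatedness of $\overline{\gothM}_{1,3}$ \cite{KollarModuli}, the family $\pi$ admits a unique stable limit $W^{\text{st}}$, whose moduli point depends only on the admissible smoothing. Since $W$ lies in the closure of the GPSZ22 family, the point $[W^{\text{st}}]$ lies in the corresponding divisor.

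For (ii) and (iii), it remains to identify the ``main'' component of the reducible stable limit $W^{\text{st}}$, besides the K3-tail already described in \cite{GPSZ22}. Tracing the weighted blow-up recipe of \cite{GPSZ22}, this component is birational to the minimal resolution $\phi\colon X \to W$ which we have analysed in detail. For $E_n$ singularities, Lemma~\ref{classification of E_i} and the study in Section~\ref{sect: proof of En} identify $X$ with the minimal elliptic surface equipped with a bisection described in Examples~\ref{ex: E12}--\ref{ex: E14}, giving (ii). For $Z_n, W_n$ singularities, Lemma~\ref{K3} and Lemma~\ref{double cover} show that $X$ is birational to a K3 surface with ADE singularities, matching \cite[Prop.~7.1]{GPSZ22} and giving (iii).

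The main obstacle will be to verify rigorously that the MMP computing the stable limit contracts precisely the curve $E$ (and the appropriate $(-2)$-configurations from Tables~\ref{tab: E13 sing fibre}--\ref{tab: with Sbar}), so that the non-tail component of $W^{\text{st}}$ is genuinely a birational model of $X$ rather than some further, unexpected birational modification. In the $E_n$ case this is mitigated by the fact that the minimal elliptic model we construct is already essentially stable (apart from contracting nodal curves in the multiple fibre), so the identification is forced by the universal property of stable limits; in the $Z_n, W_n$ case it suffices to match up the plane sextic description obtained here with the one arising from \cite{GPSZ22}.
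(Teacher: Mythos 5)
Your proposal is correct and follows essentially the same route as the paper: reduce to Theorems~\ref{thm: type E} and \ref{thm: Z and W} to identify $W$ with the families of \cite{GPSZ22}, then quote their weighted blow-up description of the stable replacement, with the birational type of the non-K3 component supplied by the classification of the minimal resolution. (One small slip: Proposition~\ref{prop: min res p_g=2} \emph{assumes} $p_g(W)=2$ rather than deriving it from $\chi(\ko_W)=3$; that step instead follows from $q(W)=0$ for Gorenstein stable surfaces with $K^2=1$.)
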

\begin{proof}
 We have proved in Theorem \ref{thm: type E} and Theorem \ref{thm: Z and W} that there is one irreducible family of such  $W$, which therefore has to coincide with the one constructed in \cite{GPSZ22}. Therefore the stable replacement of $W$ lies in one of the divisors constructed in loc.\ cit.\ . For $(ii)$ and $(iii)$ we note that again by  \cite{GPSZ22} the stable replacement is the union of a weighted blow up of $W$ and a singular K3 surface, so the only missing information is the geometry of the minimal resolution of $W$ in the $E_n$ case, which is provided by Theorem \ref{thm: type E}.
\end{proof}
It would be interesting to understand how these divisors intersect and how they interact with the divisors coming from T-singularities described in \cite{CFPRR}, but we do not address this question here.

\begin{rem}
 A similar analysis could in principle be done for Gorenstein surfaces with $K_X^2 = 1$ and $p_g = 1$ thus complementing the results in \cite{FPR17} and \cite{do-rollenske22}. 
 However, as happened with the simple elliptic singularities considered in \cite{do-rollenske22}, one should expect more cases depending on whether the canonical curve passes through the singularity or not. 
\end{rem}

\bibliographystyle{alpha}
\bibliography{references}

\end{document}